\newcommand*{\vcenteredhbox}[1]{\begingroup
	\setbox0=\hbox{#1}\parbox{\wd0}{\box0}\endgroup}
\theoremstyle{plain}
\newtheorem{proposition}{Proposition}[section]
\newtheorem{lemma}[proposition]{Lemma}
\newtheorem{corollary}[proposition]{Corollary}
\newtheorem{theorem}[proposition]{Theorem}
\theoremstyle{definition}
\newtheorem{definition}[proposition]{Definition}
\newtheorem{remark}[proposition]{Remark}
\newcommand{\C}{\mathbb{C}}
\newcommand{\Z}{\mathbb{Z}}
\newcommand{\Q}{\mathbb{Q}}
\newcommand{\R}{\mathbb{R}}
\newcommand{\PP}{\mathbb{P}}
\newcommand{\NS}{\mathrm{NS}}
\newcommand{\QQ}{\mathcal{Q}}
\newcommand{\RR}{\mathcal{R}}
\newcommand{\U}{\mathcal{U}}
\newcommand{\XX}{\mathcal{X}}
\titleformat*{\section}{\large\scshape}
\titleformat*{\subsection}{\scshape}
\titleformat*{\subsubsection}{\scshape}
\title{\normalsize{\textbf{COMPLEX MULTIPLICATION AND NOETHER-LEFSCHETZ LOCI OF\\THE TWISTOR SPACE OF A K3 SURFACE}}}
\author{\normalsize{\textsc{Francesco Viganò}}\footnote{LSGNT (London School of Geometry and Number Theory), Centre for Doctoral Training across Imperial College London, University College London and King's College London, \href{mailto:francesco.vigano.20@ucl.ac.uk}{francesco.vigano.20@ucl.ac.uk} .}}
\date{}
\begin{document}

\sloppy

\maketitle

\begin{abstract}
	\small{For an algebraic K3 surface with complex multiplication (CM), algebraic fibres of the associated twistor space away from the equator are again of CM type. In this paper, we show that algebraic fibres corresponding to points at the same altitude of the twistor base $S^2\simeq \PP^1_\C$ share the same CM endomorphism field. Moreover, we determine all the admissible Picard numbers of the twistor fibres.}
\end{abstract}

A projective (or equivalently, algebraic) complex K3 surface $X$ is said to be CM (complex multiplication) if the endomorphism field $K_{T(X)}$ of the Hodge structure on the transcendental lattice $T(X) = \NS(X)_\Q^\perp \subseteq H^2(X,\Q)$ is a CM field, and $\dim_{K_{T(X)}} T(X) =1$. Denote by $\XX \to \PP^1_\C$ the twistor space of the projective K3 surface $X$ associated with a K\"ahler class given by an ample class $\ell = c_1(L) \in H^2(X,\Q)$ (see Section \ref{section.shortdescription} for a short description). Despite the transcendental nature of the twistor construction, the fibres $\XX_\zeta$ that are again algebraic share some arithmetic properties. In particular, all algebraic fibres away from the equator $\PP^1_\C \simeq S^2$ are CM, and the corresponding CM endomorphism fields share the same totally real maximal subextension \cite[Theorem 5.3]{Huy}.

\medskip

In this paper we prove that the CM fields corresponding to fibres at the same altitude of $S^2\simeq \PP^1_\C$ coincide (Theorem \ref{theorem.sameCM}). This result will be proven by introducing an action of the topological multiplicative group $K_{T(X)}^\times$ on the Noether-Lefschetz locus of the upper-half sphere.

\begin{theorem}\label{theorem.0.1}
	Consider the twistor space $\XX \to \PP^1_\C$ associated with a projective complex K3 surface $X$ with complex multiplication. Assume that $\zeta_1, \zeta_2 \in \PP^1_\C$ are two points of Picard jump at the same altitude and not on the equator. Then $\XX_{\zeta_1}$ is algebraic if and only if $\XX_{\zeta_2}$ is such. If so, then the CM endomorphism fields of these K3 surfaces coincide. Moreover, the set of points of Picard jump at the same altitude of $\zeta_1$ (and $\zeta_2$) is countable and dense in the circle at that altitude.
\end{theorem}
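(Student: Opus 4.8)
The plan is to encode the Noether--Lefschetz locus of the twistor line arithmetically and then exploit the multiplicative structure of $K_{T(X)}$. Fix the orthogonal decomposition $H^2(X,\Q)=\NS(X)_\Q\oplus T(X)_\Q$, write $T(X)_\Q=K_{T(X)}\cdot v_0$ for the one-dimensional $K_{T(X)}$-action, and let $\tau_0\colon K_{T(X)}\hookrightarrow\C$ be the embedding cut out by $\sigma=\sigma_X\in T(X)^{2,0}$, so that $(yv_0,\sigma)=\tau_0(y)\lambda$ with $\lambda=(v_0,\sigma)$. After the standard normalisation the twistor periods in an affine coordinate $t$ on $\PP^1_\C$ are $\sigma_t=\sigma+2t\,e_3-t^2\bar\sigma$ with $e_3=\omega/\sqrt{(\omega,\omega)}$; computing the orthogonal unit vector in the positive three-space gives height $\frac{1-|t|^2}{1+|t|^2}$, so the altitude is a function of $|t|$ alone and the equator is $|t|=1$. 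A class $h=h_{\NS}+xv_0$ is of type $(1,1)$ on $\XX_t$ exactly when $\bar u\,t^2-2ct-u=0$, where $u=\tau_0(x)\lambda$ and $c=(h_{\NS},e_3)$; solving and computing $|t|$ shows that the altitude of such a point is governed by the single ratio $c^2/|u|^2$. This is the bookkeeping I would set up first.

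Next I would introduce the announced action. Let $U=\{\kappa\in K_{T(X)}^\times:\kappa\,\iota(\kappa)=1\}$ be the norm-one subtorus, which by Hilbert~90 equals $\{\beta/\iota(\beta):\beta\in K_{T(X)}^\times\}$, where $\iota$ is complex conjugation. For $\kappa\in U$ let $\tilde\kappa$ act by multiplication by $\kappa$ on $T(X)_\Q$ and by the identity on $\NS(X)_\Q$. Since the involution attached to the polarisation is complex conjugation $\iota$, one has $(\kappa a,\kappa b)=(a,\iota(\kappa)\kappa\,b)=(a,b)$, so $\tilde\kappa$ is a rational Hodge isometry of $H^2(X,\Q)$. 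Using $\tilde\kappa(\sigma)=\tau_0(\kappa)\sigma$, $\tilde\kappa(\bar\sigma)=\overline{\tau_0(\kappa)}\bar\sigma$ and $|\tau_0(\kappa)|=1$ one finds $\tilde\kappa(\sigma_t)=\tau_0(\kappa)\,\sigma_{t'}$ with $t'=t/\tau_0(\kappa)$, so $\tilde\kappa$ rotates the twistor line and preserves $|t|$, hence the altitude. As $\tilde\kappa$ carries the Hodge structure of $\XX_t$ isometrically to that of $\XX_{t'}$, the two fibres have isometric transcendental Hodge structures; in particular one is projective if and only if the other is, and when projective they are CM with the same endomorphism field. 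This already proves the statement for two points lying in a common $U$-orbit.

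For countability and density I would analyse $\tau_0(U)=\{\tau_0(\beta)/\overline{\tau_0(\beta)}\}\subseteq S^1$. Because $K_{T(X)}$ is totally imaginary, $\tau_0(K_{T(X)})$ is a $\Q$-subspace of $\C$ not contained in $\R$, hence dense, so $\{\arg\tau_0(\beta)\}$ is dense in $\R/2\pi\Z$ and $\tau_0(U)$ is a countable dense subgroup of the unit circle. Thus every $U$-orbit is countable and dense in its circle of constant altitude; since the full Noether--Lefschetz locus is a countable union of points, the set of points of Picard jump at a fixed altitude is countable, and it is dense because it contains a dense orbit.

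The remaining, and to my mind the hard, point is the equality of CM fields for two same-altitude points that need not lie in a single $U$-orbit. Here I would argue that $K_{T(\XX_\zeta)}$ is an invariant of the altitude. By \cite[Theorem~5.3]{Huy} the totally real subfield is already fixed, so it suffices to pin down $K_{T(\XX_\zeta)}=K_0(\sqrt{-\delta_\zeta})$ modulo norms. When $K_{T(X)}$ is imaginary quadratic this is transparent: the transcendental lattice of a projective fibre has rank two, so $K_{T(\XX_\zeta)}=\Q(\sqrt{-\det Q_{T(\XX_\zeta)}})$, and writing $T(\XX_\zeta)=g^\perp\cap(T(X)_\Q\oplus\Q\omega)$ for the new algebraic class $g=xv_0+y\omega$, the orthogonal-complement relation $\det Q_{T(\XX_\zeta)}\equiv(\omega,\omega)\det Q_{T(X)}\cdot Q(g)\pmod{(\Q^\times)^2}$ together with the altitude constraint $y^2(\omega,\omega)=\rho^2 N(x)|\lambda|^2$ forces $N(x)$, hence $Q(g)$, to be determined modulo squares by the altitude, so $-\det Q_{T(\XX_\zeta)}$ and therefore the field are altitude-determined. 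The main obstacle is the analogous identification when $[K_{T(X)}:\Q]=2m>2$: there the discriminant no longer sees the whole field, and one must instead compute the full degree-$2m$ endomorphism field of the period $\sigma_\zeta$ inside $T(X)_\Q\oplus\Q\omega$---for instance through its Mumford--Tate torus---and prove directly that it depends only on $|t|$. Once this altitude-invariance is established, both the equivalence of algebraicity and the equality of CM fields for arbitrary $\zeta_1,\zeta_2$ at the same altitude follow at once.
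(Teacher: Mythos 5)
Your norm-one action is correct, and it is essentially the mechanism the paper uses: for $\kappa\in K_{T(X)}^\times$ with $\kappa\,\iota(\kappa)=1$, multiplication by $\kappa$ on $T(X)_\Q$ extended by the identity on $\NS(X)_\Q$ is a rational Hodge isometry that sends $\sigma_t$ to a scalar multiple of $\sigma_{t'}$ with $\abs{t'}=\abs{t}$, so it permutes the jump points of a fixed altitude circle; your Hilbert-90 density argument matches the paper's Proposition~\ref{proposition.density}, and the countability/density claims for an orbit are fine. Indeed, for the points it covers, your isometry argument is slightly cleaner than the paper's proof of Proposition~\ref{proposition.sameCMfield}, which instead verifies that Huybrechts' invariants $\gamma,\delta$ of the quadratic extension $K_{T'}/K_T^0$ are unchanged (one should still note that the isometry carries $\sigma_t$ to a multiple of $\sigma_{t'}$, so conjugation by it preserves the embeddings of the endomorphism fields into $\C$, not just their isomorphism class).

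The genuine gap is exactly the step you flag at the end: nothing in your argument shows that two same-altitude jump points off the equator lie in a common norm-one orbit, and your fallback (showing the CM field is a function of the altitude) is carried out only when $[K_{T(X)}:\Q]=2$ and left open in general --- but the theorem is about \emph{arbitrary} $\zeta_1,\zeta_2$, so as written the proposal proves only the same-orbit case. The paper closes this not by computing the field in terms of the altitude but by enlarging the group: it lets all of $K_T^\times$ act on the Noether--Lefschetz locus $\QQ$ of the open upper half-sphere, defined on orthogonal classes by $\ell'=\gamma_1+\ell\mapsto\ell''=A(\gamma_1)+\ell$; for $\abs{A}\neq 1$ this is \emph{not} induced by an isometry, which is why it needs separate analysis. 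Two inputs then finish the argument: (i) CM gives $\dim_{K_T}T=1$, so this action is transitive on $\QQ$; (ii) Proposition~\ref{proposition.altitude} shows that acting by $A$ with $\abs{A}<1$ strictly raises the altitude and with $\abs{A}>1$ strictly lowers it. Hence if $z_1,z_2\in\QQ$ have the same altitude and $z_2=A*z_1$, necessarily $\abs{A}=1$: same-altitude jump points automatically lie in one norm-one orbit (Corollary~\ref{corollary.action1}), after which your own argument applies verbatim. Without this transitivity-plus-monotonicity step (or a completed altitude-invariance computation of the full degree-$2m$ endomorphism field, which you correctly identify as the obstacle), the proof is incomplete.
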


\begin{center}
	\includegraphics[width=0.3\textwidth]{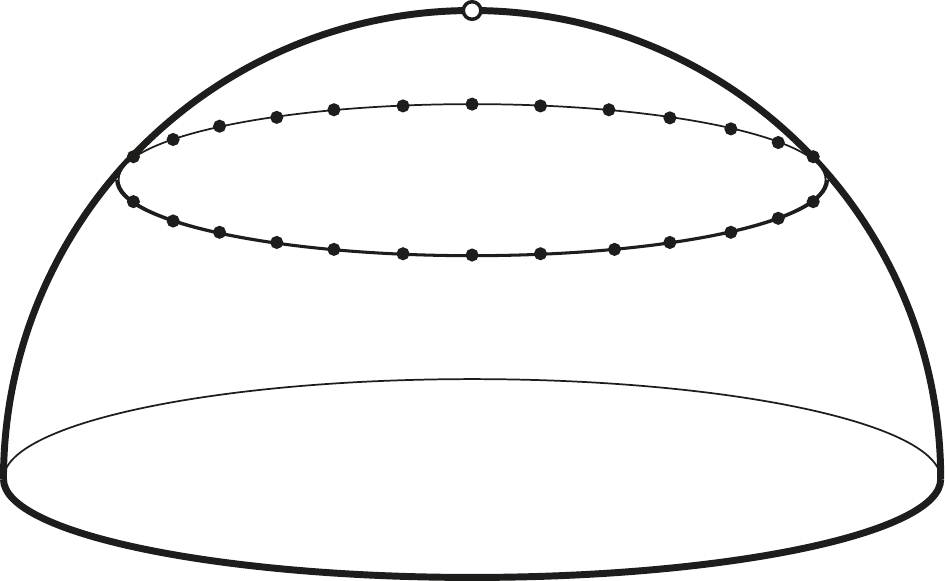}
\end{center}

\medskip

The Picard number of a K3 surface is the rank of its Néron-Severi group. Huybrechts proved that, if a fibre $\XX_\zeta$ has excessive Picard number (that is, bigger than the original Picard number $\rho(X)$), then $\zeta$ lies on the equator of $S^2 \simeq \PP^1_\C$ \cite[Proposition 3.2]{Huy}. We prove, in the CM case, that there is only one admissible excessive Picard value (Theorem \ref{theorem.geometricpicardjump}).

\begin{theorem}\label{theorem.0.2}
	Consider the twistor space $\XX \to \PP^1_\C$ associated with a projective complex K3 surface $X$ with complex multiplication. If $\zeta$ is a point of Picard jump on the equator, then
	\[
	\rho(\XX_\zeta) =  10 + \frac{\rho(X)}{2}.
	\]
	Moreover, the Noether-Lefschetz locus of the equator is dense in the equator. More precisely, the locus of points on the equator whose fibres are algebraic K3 surfaces is dense in the equator.
\end{theorem}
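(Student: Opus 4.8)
The plan is to compute $\NS(\XX_\zeta)$ of an equatorial fibre directly from its period, and then to read off both the Picard number and the algebraicity from the arithmetic of the CM field $K = K_{T(X)}$. First I would recall the period description of the twistor fibres. Let $W = \langle\omega_1,\omega_2,\omega_3\rangle\subseteq H^2(X,\R)$ be the positive-definite three-space of the hyperk\"ahler structure, with $\omega_1 = \ell\in\NS(X)_\R$ and $\omega_2,\omega_3\in T(X)_\R$ spanning the period plane $P_X$ of $X$. For $\zeta$ on the equator the K\"ahler class $\kappa_\zeta$ lies in $P_X$, so the period plane is $P_\zeta = \langle\ell, v_\zeta\rangle$, where $v_\zeta\in P_X$ is orthogonal to $\kappa_\zeta$ inside $P_X$ and runs over all directions of $P_X$ as $\zeta$ runs over the equator. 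Since $\NS(\XX_\zeta) = P_\zeta^\perp\cap H^2(X,\Z)$ and $H^2(X,\Q) = \NS(X)_\Q\oplus T(X)_\Q$ is orthogonal with $\ell\in\NS(X)$ and $v_\zeta\in T(X)_\R$, a rational class is orthogonal to $P_\zeta$ iff its $\NS$-component is orthogonal to $\ell$ and its $T$-component is orthogonal to $v_\zeta$:
\[
\NS(\XX_\zeta)_\Q = \big(\ell^\perp\cap\NS(X)_\Q\big)\oplus\big(v_\zeta^\perp\cap T(X)_\Q\big).
\]
By the Hodge index theorem the first summand is negative definite of rank $\rho(X)-1$, so $\rho(\XX_\zeta) = \rho(X)-1 + \dim_\Q\big(v_\zeta^\perp\cap T(X)_\Q\big)$.

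Next I would bring in the CM structure. Write $K_0$ for the totally real subfield of $K$, set $m = [K_0:\Q] = 11 - \rho(X)/2$, and identify $T(X)_\Q\cong K$ so that the polarization is $\langle x,y\rangle = \mathrm{Tr}_{K/\Q}(\gamma x\bar y)$ for some $\gamma\in K_0^\times$. Let $\phi_0$ be the CM-type embedding with $T^{2,0} = \C_{\phi_0}$, let $\sigma_X$ span $H^{2,0}(X)$, and write $v_\zeta = \mathrm{Re}(\lambda\sigma_X)$ with $\lambda = \lambda(\zeta)\in\C^\times$. A direct computation gives $\langle w,v_\zeta\rangle = \mathrm{Re}\big(\mu\,\overline{\phi_0(w)}\big)$ for $w\in K$, where $\mu\in\C^\times$ depends linearly on $\lambda$; hence $w\in v_\zeta^\perp$ iff $\phi_0(w)$ lies on a fixed real line $\ell_\mu\subseteq\C$. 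Since any nonzero point of $\phi_0(K)$ on $\ell_\mu$ determines it, one has $v_\zeta^\perp\cap K\neq 0$ iff $\ell_\mu = \R\,\phi_0(\alpha)$ for some $\alpha\in K^\times$, and then the condition reads $w\alpha^{-1}\in\phi_0^{-1}(\R) = K_0$, i.e.
\[
v_\zeta^\perp\cap T(X)_\Q = \alpha K_0,\qquad \dim_\Q(\alpha K_0) = m
\]
(the identity $\phi_0^{-1}(\R) = K_0$ being the defining CM property). Therefore at every equatorial point of Picard jump $\rho(\XX_\zeta) = \rho(X)-1+m = 10 + \rho(X)/2$, which is the first assertion.

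For the second assertion I would compute the signature of the form on the jump subspace and deduce algebraicity. Substituting $x=\alpha t$, $y=\alpha u$ with $t,u\in K_0$ turns the restricted form into the weighted trace form $(t,u)\mapsto 2\,\mathrm{Tr}_{K_0/\Q}(\delta t u)$ with $\delta = \gamma\,N_{K/K_0}(\alpha)\in K_0$, whose signature is $(p, m-p)$ with $p = \#\{\phi : \phi(\delta)>0\}$ taken over the real places $\phi$ of $K_0$. As $N_{K/K_0}(\alpha)$ is totally positive, $p$ equals the number of real places at which $\gamma$ is positive; and since $T(X)$ has signature $(2,20-\rho(X))$ with its positive part at the single place below $\phi_0$, we get $p=1$ independently of $\alpha$. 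Hence $\NS(\XX_\zeta)$ has signature $(1,\ast)$, so it carries a class of positive square and $\XX_\zeta$ is projective. Thus on the equator the Noether--Lefschetz locus equals the locus of algebraic fibres. Density then follows because these jump points are exactly those $\zeta$ whose $v_\zeta$-direction is $\R\,\phi_0(\alpha)$ for some $\alpha\in K^\times$, i.e. those with argument in $\{\arg\phi_0(\alpha)\bmod\pi : \alpha\in K^\times\}$; as $\phi_0(K)$ is a $\Q$-subspace of $\C$ of dimension $2m\geq 2$ spanning $\C$ over $\R$, this set of arguments is dense in $\R/\pi\Z$, so the jump (equivalently algebraic) locus is a countable dense subset of the equator.

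The main obstacle is the signature computation forcing $p=1$: it is what makes every equatorial jump point carry a genuinely projective fibre rather than a merely Hodge-theoretically special one, and it hinges on matching the CM-type place $\phi_0$ with the unique positive conjugate pair of the polarization $\gamma$. Once this is secured, the Picard-number count and the density statement are routine.
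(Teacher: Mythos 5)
Your proposal is correct, and its second half takes a genuinely different --- and in fact stronger --- route than the paper. For the Picard count the paper argues abstractly: it shows (Lemma \ref{lemma.equatorT} and Proposition \ref{proposition.picardjumpeq}) that the jump space $P_z^\perp\cap(T\oplus\Q\ell)$ lies in $T$ and is a vector space over $K_T^0=K_T\cap\R$, then rules out $K_T^0$-dimensions $0$ and $2$, forcing $\Q$-dimension $r/2$; your trace-form model $T(X)_\Q\cong K$, $\langle x,y\rangle=\mathrm{Tr}_{K/\Q}(\gamma x\bar y)$, identifies that same space concretely as $\alpha K_0$, so this part is essentially a concretization of the paper's module argument (and matches an alternative ``periods on a real line'' proof the paper leaves implicit), combined with Remark \ref{remark.differentpicardnumbers}. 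The genuine divergence is the algebraicity statement: the paper never claims that \emph{every} equatorial jump point has an algebraic fibre; it only proves density of the algebraic locus by taking one class $\gamma_1\in T$ of positive square and transporting the corresponding equatorial point by the rotation action of the norm-one elements of $K_T^\times$, which are dense in $S^1$ by Corollary \ref{corollary.densityCM} (see Remark \ref{remark.R+} and Proposition \ref{proposition.action2}). Your signature computation --- $N_{K/K_0}(\alpha)$ totally positive, $\gamma$ positive at exactly the real place of $K_0$ below $\phi_0$ because $T(X)$ has signature $(2,r-2)$ with positive plane in the $\phi_0$-component, hence the restricted form on $\alpha K_0$ has signature $(1,m-1)$ --- proves the sharper fact that on the equator the Noether-Lefschetz locus \emph{coincides} with the algebraic locus, i.e.\ $\RR^+=\RR$ in the paper's notation; density then follows from density of the jump directions, which is essentially the paper's Proposition \ref{proposition.jumpdenseoneq}. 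What the paper's approach buys is the uniform group-action machinery (free, transitive, continuous actions and the homeomorphisms with $K_T^\times$ and $K_T^\times/(K_T^0)^\times$) that simultaneously drives Theorem \ref{theorem.0.1}; what yours buys is the stronger conclusion, at the cost of invoking the trace-form normal form of the polarization and its signature behaviour, all steps of which (the identity $\phi_0^{-1}(\R)\cap K=K_0$, total positivity of norms from a CM field, the signature of twisted trace forms) are standard and correctly deployed.
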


\begin{center}
	\vcenteredhbox{\begin{tabular}{cc}
			\toprule
			&
			\begin{tabular}{c}
				Admissible values \\
				of $\rho(\XX_\zeta)$, CM case \\
			\end{tabular}
			\\
			\toprule
			\begin{tabular}{c}
				Outside the \\
				equator \\
			\end{tabular}
			& $\rho(X)-1, \rho(X)$ \\
			\midrule
			\begin{tabular}{c}
				On the \\
				equator \\
			\end{tabular}
			& $\rho(X)-1, 10 + \frac{\rho(X)}{2}$ \\
			\bottomrule
	\end{tabular}}
	\qquad \qquad
	\vcenteredhbox{\includegraphics[width=0.3\textwidth]{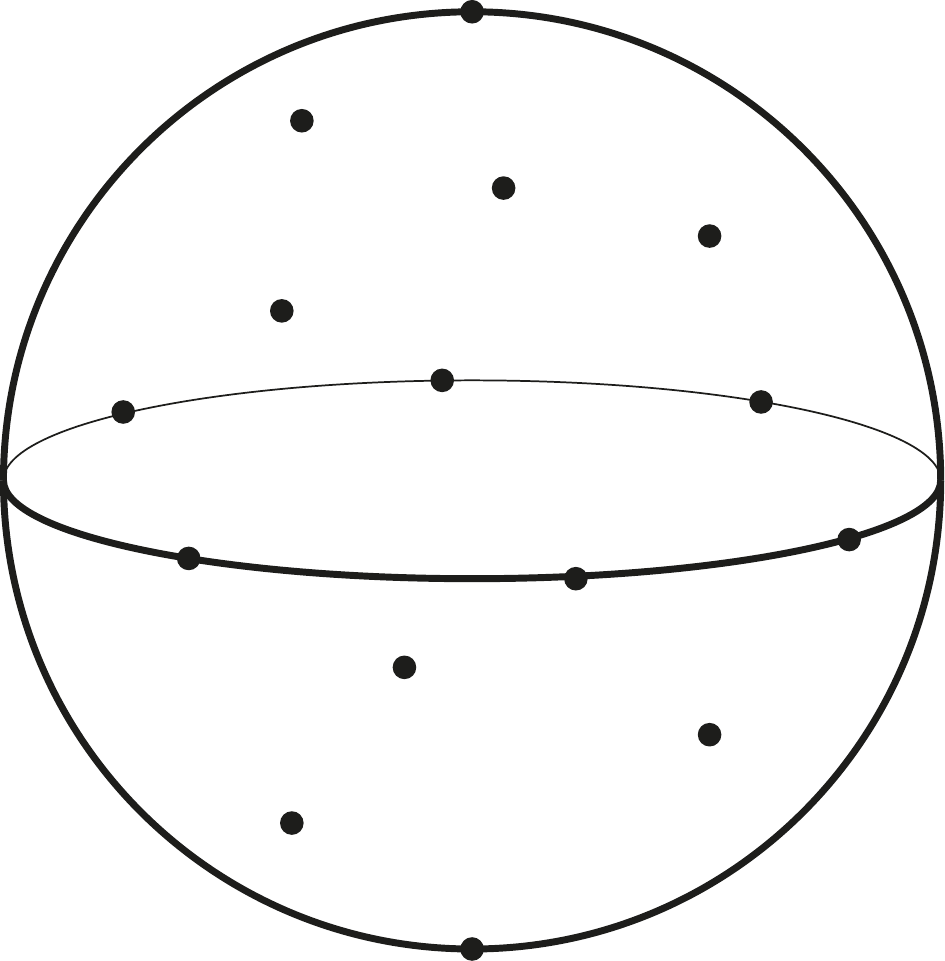}}
\end{center}

\bigskip
\bigskip

\textbf{Outline.} Section \ref{section.preliminaries} contains some basic information about the geometry of the twistor space, Hodge structures of K3 type and CM fields. In Section \ref{section.twistorsphere} we describe a family of Hodge structures, parameterized by a sphere, attached to a given Hodge structure of K3 type. This construction is, at the level of Hodge structures, the algebraic equivalent of the twistor space. Afterwards, in Section \ref{section.excessive}, we prove Proposition \ref{proposition.picardjumpeq} (algebro-equivalent of Theorem \ref{theorem.0.2}) and other results characterizing points of Picard jump on the equator of the mentioned sphere. Two actions of the topological multiplicative group $K_{T(X)}^\times$ are introduced in Section \ref{section.actions}; these will play a key role in the proof of Corollary \ref{corollary.action1} (algebro-equivalent of Theorem \ref{theorem.0.1}) and related statements. Finally, the translation into geometric terms is exposed in Section \ref{section.geometry}, and Theorems \ref{theorem.0.1} and \ref{theorem.0.2} are proven.

\bigskip

\textbf{Acknowledgements.} I thank François Charles for introducing me to the subject, and both him and Daniel Huybrechts for our prolific discussions and their significant comments. I am also thankful to Paolo Stellari for his useful remarks in the revision of this paper.

\pagebreak

\section{Preliminaries}\label{section.preliminaries}

We present a short description of the twistor space of a complex K3 surface, and we recall some facts about Hodge structures of K3 type and CM fields.

\subsection{The twistor space of a complex K3 surface}\label{section.shortdescription}
A complex K3 surface $X$ admits a sphere of different complex structures. More explicitly, one can attach a set of different complex structures $I_\zeta$ to the underlying differentiable manifold $X$, so that $(X,I_\zeta)$ is again a K3 surface, and these structures are parameterized by a sphere $\zeta \in S^2 \simeq \PP^1_\C$. These K3 surfaces can be patched together into a $3$-dimensional complex manifold $\XX$, called \emph{twistor space} (see \cite[Section 3.F]{HKLR}, \cite{Hit} or \cite[Chapter 7]{Joyce} for more on the twistor space and the details of its construction). $\XX$ comes with a holomorphic map $\XX \to \PP^1_\C$ with the property that, for $\zeta \in \PP^1_\C \simeq S^2$, the fibre $\XX_\zeta$ is the K3 surface $(X,I_\zeta)$. In fact, this construction is non-canonical, and depends on the choice of a K\"ahler class of $X$ (any K3 surface is K\"ahler, \cite{Siu}). The original K3 surface corresponds to one of the poles of $\PP^1_\C \simeq S^2$.

\medskip

Even if the original K3 surface $X$ is algebraic (or equivalently, projective), $\XX$ is no longer algebraic, and has to be thought of as of transcendental nature. The twistor space of a K3 surface plays an important role in several situations. Among all, it provides an example of K3-fibration over a compact base, and it is used in a modern proof of Torelli Theorem (any two K3 surfaces are connected by a finite path of twistor lines, see \cite[Section 7.3]{HuyK3}).

\subsection{Hodge structures of K3 type}

By \emph{polarized Hodge structure of K3 type} we mean the data of a vector space $T$ over $\Q$ of dimension $r\ge 2$, endowed with a symmetric bilinear form $( \ . \ )$ of signature $(2,r-2)$, and a decomposition
\[
T_\C = T^{2,0} \oplus T^{1,1} \oplus T^{0,2}
\]
such that the $\C$-linear extension of $( \ . \ )$ satisfies:
\begin{enumerate}
	\item the subspaces $T^{1,1}$ and $T^{2,0} \oplus T^{0,2}$ are orthogonal;
	\item $( \ . \ )$ is positive definite on $P_T = (T^{2,0} \oplus T^{0,2}) \cap T_\R$ and $T^{2,0}, T^{0,2} \subseteq T_\C$ are isotropic;
	\item complex conjugation on $T_\C$ preserves $T^{1,1}$ and exchanges $T^{2,0}$ and $T^{0,2}$;
	\item $\dim_\C T^{2,0} =1$.
\end{enumerate}

We will denote by $\sigma$ a $\C$-generator of $T^{2,0}$. Note that $\overline{\sigma}$ generates $T^{0,2}$, and the required conditions give $(\sigma.\sigma)=0$, $(\sigma.\overline{\sigma})>0$, $(\Re(\sigma))^2=(\Im(\sigma))^2$ and $(\Re(\sigma).\Im (\sigma))=0$. The plane $P_T$ is considered with the orientation given by the basis $\{\Re(\sigma), \Im(\sigma)\}$. We will also assume that $T$ is irreducible.

\medskip

Define $K_T$ to be the ring of endomorphism of the Hodge structure $T$. As $T$ is irreducible, $K_T$ is a division algebra. It was pointed out by Zarhin \cite{Zar} that $K_T$ is indeed a number field, endowed with an embedding $K_T \hookrightarrow \C$. This embedding is defined in the following way: as $\varphi$ preserves the $(2,0)$-part of $T$, $\varphi(\sigma)= \varphi \cdot \sigma$, where we identify $\varphi$ with a scalar in $\C$. Zarhin proved also that $K_T$ is either totally real or complex multiplication (CM). Notice that $T$ is naturally a $K_T$-module and, therefore, it becomes a $K_T$-vector space.

\begin{definition}
	$T$ is said to be:
	\begin{itemize}
		\item \emph{of totally real type} if $K_T$ is totally real,
		\item \emph{of almost complex multiplication type}, or \emph{almost CM}, if $K_T$ is CM, and
		\item \emph{of complex multiplication type}, or \emph{CM}, if $K_T$ is CM and $\dim_{K_T} T =1$.
	\end{itemize}
\end{definition}

The last condition seems technical, but turns out to be extremely useful in several situations. In fact, one can define the CM type case by requiring the sole condition $\dim_{K_T} T =1$ to be satisfied. Indeed, van Geemen \cite[Lemma 3.2]{vGe} proved that, whenever $K_T$ is totally real, $\dim_{K_T} T \ge 3$ holds. Of course, the condition $\dim_{K_T} T =1$ is equivalent to $[K_T \colon \Q]=r$.

\medskip

As the form $( \ . \ )$ is non-degenerate, we can define the \emph{transpose} $\varphi'$ of $\varphi$ by the condition
\[
(\gamma.\varphi(\delta)) = (\varphi'(\gamma).\delta),
\]
for any $\gamma, \delta \in T$. $\varphi'$ is in fact an element of $K_T$, and corresponds to the complex conjugate of $\varphi$ via the embedding $K_T \hookrightarrow \C$, that is: $\varphi'(\sigma)=\overline{\varphi} \cdot \sigma$ (see \cite[Remark 2.6]{Huy} or \cite[Chapter 3]{HuyK3}). In particular, an element $\varphi \in K_T$ is an isometry for $( \ . \ )$ if and only if its image in $\C$ has unitary norm. We will denote by $K^0_T=K_T \cap \R$ the real part of $K_T$, once seen $K_T$ as a subfield of $\C$ via the prescribed embedding. Notice that $K_T^0$ can be characterized by the subfield of $K_T$ of self-transpose endomorphisms.

\medskip

The \emph{period field} $k_T$ of $T$ is defined in the following way. Let $\gamma$ vary in $T$. Among the periods $(\sigma.\gamma)$, at least one is not zero, say for $\tilde{\gamma}$, since $\sigma \neq 0$ and $( \ . \ )$ is non-degenerate.\footnote{More precisely, if $T$ is irreducible, $(\sigma.\gamma)=0$ if and only if $\gamma=0$.} The period field is defined to be the subfield of $\C$ generated over $\Q$ by the quotients $(\sigma.\gamma)/(\sigma.\tilde{\gamma})$. Clearly, it is enough to consider these quotients only for $\gamma$ varying in a $\Q$-basis of $T$. For a proof of the following result, see \cite[Lemma 2.10]{Huy}.

\begin{lemma}\label{lemma.CMeasy}
	Assume that $T$ is a polarized irreducible Hodge structure of K3 type with complex multiplication (i.e.\ $K_T$ is a CM field and $\dim_\Q T =[K_T : \Q]$). Then
	\begin{enumerate}
		\item The endomorphism field $K_T$ and the period field $k_T$ coincide (as subfields of $\C$);
		\item For any basis $\{\gamma_i\}$ of $T$ and any $\sigma \in T^{2,0}$, $\sigma \neq 0$, the coordinates $x_i = (\sigma . \gamma_i)$ satisfy
		\[
		K_T=k_T=\bigoplus_{i=1}^r \Q \cdot (x_i/x_1).
		\]
	\end{enumerate}
\end{lemma}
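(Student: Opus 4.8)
The plan is to prove that $K_T = k_T$ and that both equal the span $\bigoplus_i \Q \cdot (x_i/x_1)$, exploiting the key arithmetic fact that in the CM case the endomorphism field $K_T$ acts transitively (it is one-dimensional over $K_T$) together with the way $K_T$ embeds into $\C$ via its action on $\sigma$. I would begin by recording the central identity relating the action of $K_T$ on $T$ to the periods. For $\varphi \in K_T$, the transpose relation $(\sigma.\varphi(\gamma)) = (\varphi'(\sigma).\gamma) = (\overline{\varphi}\cdot\sigma.\gamma) = \overline{\varphi}\,(\sigma.\gamma)$ shows that applying an endomorphism inside a period multiplies the period by the complex conjugate scalar. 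This is the engine of the whole argument.

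\medskip

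First I would show $k_T \subseteq K_T$. Fix the basis $\{\gamma_i\}$ and set $x_i = (\sigma.\gamma_i)$, with $x_1 \neq 0$ after reordering (possible since $(\sigma.\gamma)=0$ iff $\gamma = 0$ by irreducibility, as noted in the footnote). Because $\dim_{K_T} T = 1$, each basis vector $\gamma_i$ equals $\varphi_i(\gamma_1)$ for some $\varphi_i \in K_T$; indeed $T = K_T \cdot \gamma_1$. Then by the identity above,
\[
x_i = (\sigma.\gamma_i) = (\sigma.\varphi_i(\gamma_1)) = \overline{\varphi_i}\,(\sigma.\gamma_1) = \overline{\varphi_i}\, x_1,
\]
so that $x_i/x_1 = \overline{\varphi_i}$, which lies in $K_T$ since $K_T$ is stable under complex conjugation (it is CM, and conjugation corresponds to the transpose, which is an endomorphism). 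This simultaneously gives $x_i/x_1 \in K_T$ for all $i$, hence $k_T \subseteq K_T$, and shows that the quotients $x_i/x_1$ are precisely the conjugates $\overline{\varphi_i}$ of a $\Q$-basis $\{\varphi_i\}$ of $K_T$.

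\medskip

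Next I would establish the reverse containment and the direct-sum description at once. Since $T = \bigoplus_i \Q\,\gamma_i$ and $\gamma_i = \varphi_i(\gamma_1)$, the $\varphi_i$ form a $\Q$-basis of the $\Q$-vector space $K_T$ (of dimension $r = [K_T:\Q]$). Complex conjugation is a $\Q$-linear automorphism of $K_T$, so $\{\overline{\varphi_i}\}$ is again a $\Q$-basis of $K_T$; but $\overline{\varphi_i} = x_i/x_1$, giving
\[
K_T = \bigoplus_{i=1}^r \Q \cdot (x_i/x_1).
\]
In particular the field generated by these quotients is all of $K_T$, i.e.\ $k_T = K_T$, proving part (1), while the displayed decomposition is exactly part (2). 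A small point to check is that $k_T$, defined as the \emph{subfield} generated by the quotients, coincides with their $\Q$-\emph{span}: this holds because the span already equals the field $K_T$, which is closed under multiplication, so no further elements are produced by taking products.

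\medskip

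I expect the main obstacle to be conceptual bookkeeping rather than hard estimation: one must be careful that the normalization $x_i/x_1$ is independent of the choice of $\sigma \in T^{2,0}\setminus\{0\}$ (rescaling $\sigma$ by $\lambda \in \C^\times$ multiplies every $x_i$ by $\lambda$, leaving the ratios fixed), and one must justify cleanly that $\{\overline{\varphi_i}\}$ remains a basis — i.e.\ that complex conjugation genuinely preserves $K_T$ rather than mapping it to a different subfield of $\C$. This is where the CM hypothesis is essential, as it guarantees the transpose (= complex conjugation under the embedding) is an honest element of $K_T$; in the totally real case conjugation acts trivially and the argument degenerates, which is consistent with the lemma being stated only for the CM setting.
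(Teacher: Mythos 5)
Your proof is correct, and it is essentially the standard argument: the paper itself does not prove Lemma \ref{lemma.CMeasy} but defers to \cite[Lemma 2.10]{Huy}, whose proof rests on exactly the identity you isolate, namely $(\sigma.\varphi(\gamma)) = \overline{\varphi}\,(\sigma.\gamma)$, combined with $\dim_{K_T}T = 1$ to write $\gamma_i = \varphi_i(\gamma_1)$ and identify the period ratios with $\overline{\varphi_i}$. Your handling of the two delicate points --- that complex conjugation preserves $K_T \subseteq \C$ because it is realized by the transpose, and that the $\Q$-span of the ratios is already a field so the ``subfield generated'' adds nothing --- is exactly what makes the argument complete.
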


\subsection{Useful facts on CM fields}

First of all, we recall a result proven by Blanksby and Loxton \cite{BL}.

\begin{theorem}
	If $E\subseteq \C$ is a CM field, then $E=\Q(\alpha)$, for a primitive element $\alpha$ satisfying $\abs{\alpha}=1$. 
\end{theorem}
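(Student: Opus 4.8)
The plan is to show that a CM field $E$ is generated by an algebraic number of absolute value $1$. The natural first step is to exploit the defining structure of a CM field: by definition $E$ is a totally imaginary quadratic extension of a totally real field $E^0$. So I would write $E = E^0(\sqrt{-d})$ for a suitable totally positive element $d \in E^0$, and denote by $c \colon E \to E$ the nontrivial automorphism of $E/E^0$, which is complex conjugation under every embedding $E \hookrightarrow \C$ (this is the characteristic feature of CM fields: complex conjugation is independent of the embedding and lands in the Galois group over $E^0$).

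The key idea is that elements of absolute value $1$ in a CM field are precisely those of the form $\beta/c(\beta) = \beta/\bar\beta$ for $\beta \in E^\times$, since under every complex embedding $|\beta/\bar\beta| = |\beta|/|\overline{\beta}| = 1$. So I would look for a single element $\beta \in E$ with the property that $\alpha := \beta/\bar\beta$ is a primitive element, i.e.\ $\Q(\alpha) = E$. First I would pick any primitive element $\theta$ with $E = \Q(\theta)$ (which exists since $E/\Q$ is separable), and then perturb it: replace $\theta$ by $\beta = \theta + t$ for a rational (or integer) parameter $t$, forming
\[
\alpha_t = \frac{\theta + t}{\overline{\theta} + t}.
\]
The point is that for all but finitely many values of $t$, the element $\alpha_t$ will generate $E$ over $\Q$, while automatically having absolute value $1$ under every embedding. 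The hard part is verifying that $\Q(\alpha_t) = E$ for infinitely many $t$; this is where a genuine argument is needed rather than a formal manipulation.

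To control the generation, I would argue by counting embeddings. The field $\Q(\alpha_t)$ fails to be all of $E$ exactly when $\alpha_t$ is fixed by some nontrivial element $\tau$ of the Galois group of the Galois closure acting on the embeddings, i.e.\ when two distinct embeddings $\iota, \iota'$ of $E$ agree on $\alpha_t$. The equation $\iota(\alpha_t) = \iota'(\alpha_t)$ is a nontrivial algebraic (in fact rational) equation in $t$ unless $\iota$ and $\iota'$ already agree on $\theta$ and $\bar\theta$ — and since $\theta$ is primitive, distinct embeddings are distinguished by their value on $\theta$. For each fixed pair $(\iota, \iota')$ the equation $\iota(\alpha_t) = \iota'(\alpha_t)$ clears denominators to a polynomial identity in $t$ that is not identically satisfied (one checks the leading behaviour, or compares values at a well-chosen $t$), hence has only finitely many solutions. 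Since there are finitely many such pairs, only finitely many $t$ are excluded, so infinitely many admissible $t$ remain.

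The main obstacle I anticipate is ensuring that the denominator $\overline{\theta} + t$ never vanishes under any embedding (handled by choosing $t$ away from the finitely many real parts $-\Re(\iota(\theta))$ that could cause a problem, or simply noting $\overline\theta + t \neq 0$ since $\theta \notin \Q$) and, more substantively, ruling out the degenerate case where $\iota(\alpha_t) = \iota'(\alpha_t)$ holds identically in $t$. The latter would force a multiplicative relation between $(\iota\theta, \iota\bar\theta)$ and $(\iota'\theta, \iota'\bar\theta)$; I would rule it out using that $\theta$ is primitive together with the rigidity of complex conjugation in a CM field (the map $\theta \mapsto \bar\theta$ commutes uniformly with all embeddings). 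Once that degeneracy is excluded, the counting argument closes, and any admissible $t$ gives the desired $\alpha = \alpha_t$ with $|\alpha| = 1$ and $\Q(\alpha) = E$.
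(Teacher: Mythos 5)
Your proof is correct, but note that the paper does not prove this statement at all: it quotes it as a theorem of Blanksby and Loxton \cite{BL} and uses it as a black box. What you give is a self-contained elementary argument, and its engine --- the quotient $\alpha_t=(\theta+t)/(\overline{\theta}+t)$ --- is exactly the trick the paper \emph{does} use in Proposition \ref{proposition.density} to show that $E\cap S^1$ is dense in $S^1$; you push the same perturbation one step further, from ``small argument'' to ``primitive''. Your sketch genuinely closes: for distinct embeddings $\iota,\iota'$ of $E$, clearing denominators in $\iota(\alpha_t)=\iota'(\alpha_t)$ cancels the $t^2$ terms and leaves the linear equation
\[
t\,\bigl(\iota(\theta-\overline{\theta})-\iota'(\theta-\overline{\theta})\bigr)+\bigl(\iota\theta\cdot\overline{\iota'\theta}-\iota'\theta\cdot\overline{\iota\theta}\bigr)=0,
\]
where one uses the CM property $\iota(\overline{\theta})=\overline{\iota\theta}$. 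Both coefficients vanish simultaneously only if $\Im(\iota\theta)=\Im(\iota'\theta)$ and $\iota\theta\cdot\overline{\iota'\theta}\in\R$, which forces either $\iota\theta=\iota'\theta$ (impossible, since $\theta$ is primitive and $\iota\neq\iota'$) or $\Im(\iota\theta)=0$ (impossible, since then $\iota(E)=\Q(\iota\theta)\subseteq\R$, contradicting total imaginarity). So each pair of embeddings excludes at most one rational $t$, and any surviving $t$ yields $\abs{\alpha_t}=1$ with $\Q(\alpha_t)=E$; note that CM enters twice, once through the commuting of conjugation with embeddings and once through total imaginarity, so the argument fails for totally real fields, as it must. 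Two remarks: your parenthetical claim that the modulus-one elements are ``precisely'' those of the form $\beta/\overline{\beta}$ is only true for elements of modulus one at \emph{every} embedding (via Hilbert 90), but you only use the trivial direction, so nothing is harmed; and the trade-off between the two routes is clear --- the citation buys brevity, while your argument makes the paper self-contained at the cost of the one-page computation above.
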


We now present other elementary properties carried by CM fields.

\begin{lemma}\label{lemma.r/2}
	Let $E \subseteq \C$ be a number field given as a subfield of $\C$, and let $e \in E$, $e \neq 0$. Then the dimension of the $\Q$-vector space $(\R \cdot e) \cap E$ does not depend on $e$, and is equal to $\dim_\Q (E \cap \R)$.
\end{lemma}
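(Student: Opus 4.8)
The plan is to show directly that the $\Q$-vector space $(\R \cdot e) \cap E$ coincides with $(E\cap\R)\cdot e$, and then to observe that multiplication by $e$ furnishes a $\Q$-linear isomorphism between this space and $E \cap \R$. Since $\dim_\Q(E\cap\R)$ patently does not involve $e$, both assertions of the lemma will drop out simultaneously.

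First I would establish the easy inclusion $(E\cap\R)\cdot e \subseteq (\R\cdot e)\cap E$: if $a \in E\cap\R$, then $ae \in E$ because $E$ is a field, while $ae \in \R\cdot e$ because $a$ is real. The reverse inclusion is where the field structure of $E$ is genuinely needed. Given $x \in (\R\cdot e)\cap E$, write $x = t\,e$ with $t \in \R$; since $e \neq 0$ and $E$ is a field, the quotient $t = x\,e^{-1}$ lies in $E$, and being real it lies in $E\cap\R$, whence $x \in (E\cap\R)\cdot e$. This gives the equality $(\R\cdot e)\cap E = (E\cap\R)\cdot e$.

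Finally, the map $E\cap\R \to (E\cap\R)\cdot e$ sending $a \mapsto ae$ is $\Q$-linear, surjective by construction, and injective because $e \neq 0$ forces $ae = 0 \Rightarrow a = 0$. Hence $\dim_\Q\bigl((\R\cdot e)\cap E\bigr) = \dim_\Q(E\cap\R)$, a quantity independent of the chosen $e$, and the lemma follows.

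The only point demanding care — the obstacle, such as it is — is the reverse inclusion, which crucially invokes closure of $E$ under inversion in order to push the real scalar $t$ back inside the field. For a mere $\Q$-subspace of $\C$ in place of the field $E$ the conclusion would generally fail, so it is precisely the multiplicative structure that makes the dimension rigid.
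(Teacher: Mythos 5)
Your proof is correct and follows exactly the paper's approach: the paper's entire proof is the observation that $(\R \cdot e) \cap E = e \cdot (E \cap \R)$, which you establish in detail (including the inversion argument for the reverse inclusion) before concluding via the multiplication-by-$e$ isomorphism.
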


\begin{proof}
	It is enough to notice that $(\R \cdot e) \cap E = e \cdot (E \cap \R)$.
\end{proof}

This Lemma assumes a particular form in the case of a CM field.

\begin{corollary}\label{corollary.imaginary}
	Let $E \subseteq \C$ be a CM field given as a subfield of $\C$, and let $\alpha$ be a generator of $E$ satisfying $\abs{\alpha}=1$, $n = [E : \Q]$. For any $e \in E$, $e \neq 0$, the dimension of the $\Q$-vector space $(\R \cdot e) \cap E$ does not depend on $e$, and is equal to $n/2$.
\end{corollary}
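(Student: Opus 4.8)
The plan is to reduce the statement to Lemma \ref{lemma.r/2}, which already provides $\dim_\Q\big((\R\cdot e)\cap E\big)=\dim_\Q(E\cap\R)$ independently of the choice of $e$. Hence it suffices to prove the purely arithmetic claim that, for a CM field $E\subseteq\C$ of degree $n=[E:\Q]$, the totally real part $E\cap\R$ has dimension exactly $n/2$ over $\Q$.

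First I would exploit the normalized generator $\alpha$ furnished by the Blanksby--Loxton theorem. Since $\abs{\alpha}=1$, complex conjugation sends $\alpha$ to $\overline{\alpha}=\alpha^{-1}$, and $\alpha^{-1}$ lies in $E=\Q(\alpha)$ because $E$ is a field. Consequently the restriction to $E$ of the conjugation $c\colon z\mapsto\overline{z}$ of $\C$ is a well-defined field automorphism of $E$; denote it by $c|_E$. By construction its fixed field is precisely $E\cap\R$.

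Next I would verify that $c|_E$ has order exactly two. As $E$ is a CM field it is totally imaginary, so $E\not\subseteq\R$ and $c|_E$ is not the identity; being an involution, it therefore generates a subgroup of order $2$ in $\mathrm{Aut}(E)$. By Artin's theorem the extension $E/(E\cap\R)$ is Galois with group $\langle c|_E\rangle$, so $[E:E\cap\R]=2$, and the tower law gives $\dim_\Q(E\cap\R)=n/2$.

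Finally, combining this computation with Lemma \ref{lemma.r/2} yields $\dim_\Q\big((\R\cdot e)\cap E\big)=n/2$ for every nonzero $e\in E$, independently of $e$, which is the assertion. I do not anticipate a genuine obstacle: the only point demanding care is checking that $c$ really stabilizes $E$ and acts nontrivially, and the two hypotheses at hand---the normalization $\abs{\alpha}=1$ and the total imaginarity of a CM field---settle these two requirements respectively.
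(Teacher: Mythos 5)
Your proposal is correct and follows essentially the same route as the paper: reduce to Lemma \ref{lemma.r/2} and then identify $\dim_\Q(E\cap\R)=n/2$. The only difference is that the paper simply cites the standard fact that $E\cap\R$ is the maximal totally real subfield of degree $n/2$, whereas you supply its proof (conjugation stabilizes $E=\Q(\alpha)$ since $\overline{\alpha}=\alpha^{-1}$, and Artin's theorem gives $[E:E\cap\R]=2$), which is a fine and complete way to fill in that step.
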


\begin{proof}
	The statement follows from Lemma \ref{lemma.r/2}, together with the fact that $E \cap \R$ is the maximal totally real subfield of $E$, satisfying $\dim_\Q (E \cap \R)=n/2$.
\end{proof}

\begin{proposition}\label{proposition.density}
	Assume that a number field $E$ is given as a subfield of $\C$. Suppose that $E \nsubseteq \R$ and $\overline{E}=E$. Then $E \cap S^1$ is dense in $S^1$.
\end{proposition}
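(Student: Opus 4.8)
\emph{Approach.} The plan is to regard $U := E \cap S^1$ as a subgroup of the circle group $S^1$ and to reduce the statement to showing that $U$ is infinite. Indeed, $U$ is closed under multiplication and inversion: if $u,v \in E$ satisfy $\abs{u}=\abs{v}=1$, then $uv$ and $u^{-1}$ again have modulus $1$ and lie in the field $E$. Hence $U$ is a subgroup of $S^1$. I would then invoke the structure of closed subgroups of the circle: via $S^1 \simeq \R/\Z$, every closed subgroup of $S^1$ is either finite cyclic or all of $S^1$. Consequently the closure $\overline{U}$ is either finite---forcing $U$ itself to be finite---or equal to $S^1$. It therefore suffices to prove that $U$ is infinite.

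To produce infinitely many elements of $U$, I would first extract a nonzero purely imaginary element of $E$. Since $E \not\subseteq \R$, choose $\beta \in E \setminus \R$ and set $\delta := \beta - \overline{\beta}$. Because $\overline{E}=E$ we have $\delta \in E$, while $\overline{\delta} = -\delta$ and $\delta \neq 0$. For each $n \in \Z$ the element $n + \delta$ is nonzero (otherwise $\delta = -n \in \R$), and
\[
u_n := \frac{n+\delta}{n-\delta} = \frac{n+\delta}{\overline{n+\delta}}
\]
has modulus $1$ and lies in $E$, so $u_n \in U$. Expanding $u_n = u_m$ as $(n+\delta)(m-\delta) = (m+\delta)(n-\delta)$ yields $2(m-n)\delta = 0$, whence $n=m$; thus the $u_n$ are pairwise distinct and $U$ is infinite. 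By the first paragraph, $U = E \cap S^1$ is then dense in $S^1$.

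The argument is elementary, and its only genuine input is the dichotomy \emph{finite or dense} for subgroups of $S^1$; once that is in place, everything reduces to exhibiting an infinite family in $U$, which the hypotheses $E \not\subseteq \R$ and $\overline{E}=E$ supply through the purely imaginary element $\delta$. I expect the sole point of care to be verifying that the map $n \mapsto u_n$ does not collapse, which the short computation above settles; in particular no appeal to a CM hypothesis or to the Blanksby--Loxton theorem is required for this statement.
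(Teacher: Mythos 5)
Your proof is correct, and it differs from the paper's in its overall logical reduction, even though the unit elements you construct are essentially the same ones. The paper takes $\delta \in E \setminus \R$ and forms $\alpha_r = (\delta+r)/(\overline{\delta}+r)$ for $r \in \Q$, observes $\abs{\alpha_r}=1$ and $\alpha_r \neq 1$, and notes $\alpha_r \to 1$ as $r \to \infty$; density then follows because powers of elements of arbitrarily small non-zero argument fill out the circle. You instead observe that $E \cap S^1$ is a subgroup of $S^1$, invoke the dichotomy that a (closed) subgroup of $S^1$ is either finite or all of $S^1$, and thereby reduce the statement to exhibiting infinitely many elements, which your $u_n = (n+\delta)/(n-\delta)$ (with $\delta$ purely imaginary, extracted as $\beta - \overline{\beta}$ using $\overline{E}=E$) provide after the distinctness computation $2(m-n)\delta = 0$. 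The trade-off: your route replaces the paper's quantitative step (arguments tending to $0$, then powers) by a qualitative one (infinitude plus the structure theorem for subgroups of $S^1$), so you avoid any limit computation at the cost of citing that dichotomy as a black box; the paper's version is self-contained modulo the elementary pigeonhole fact about powers. One small stylistic point in your favour: your extraction $\delta = \beta - \overline{\beta}$ makes the use of the hypothesis $\overline{E}=E$ explicit, whereas in the paper it enters silently in the claim $\alpha_r \in E$. Both arguments correctly avoid any CM hypothesis, as the proposition is stated for general number fields stable under conjugation and not contained in $\R$.
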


\begin{proof}
	It is enough to show that there exist elements $\alpha \in E$ lying on the circle of arbitrary small non-zero argument (density follows taking powers of these elements). Let $\delta$ be an element of $E \setminus \R$. For $r \in \Q$, define
	\[
	\alpha_r = \frac{\delta + r}{\overline{\delta}+r} \in E.
	\]
	Then $\abs{\alpha_r}=1$ and $\alpha_r$ has arbitrary small non-zero argument, since $\lim_{r \to \infty} \alpha_r = 1$.
\end{proof}

\begin{corollary}\label{corollary.densityCM}
	If $E$ is a CM field given as a subfield of $\C$, then $E \cap S^1$ is dense in $S^1$.
\end{corollary}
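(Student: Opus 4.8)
The plan is to deduce this directly from Proposition \ref{proposition.density}, by checking that a CM field $E$, regarded as a subfield of $\C$ via the given embedding, satisfies the two hypotheses there: $E \nsubseteq \R$ and $\overline{E} = E$. Once both are verified, the conclusion is immediate.

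First I would check $E \nsubseteq \R$. Since $E$ is CM, it is by definition totally imaginary, so none of its complex embeddings has image contained in $\R$; in particular the prescribed embedding $E \hookrightarrow \C$ does not, whence $E \nsubseteq \R$.

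Next I would verify $\overline{E} = E$, that is, that complex conjugation of the ambient $\C$ restricts to an automorphism of $E$. This is precisely the defining feature distinguishing CM fields among totally imaginary fields. Concretely, writing $E_0 = E \cap \R$ for the maximal totally real subfield (as already used in the proof of Corollary \ref{corollary.imaginary}), one has $E = E_0(\sqrt{-d})$ for some $d \in E_0$ that is totally positive, hence positive under the real embedding $E_0 \subseteq \R$ induced by $E \subseteq \C$; thus $\sqrt{-d}$ is purely imaginary. Complex conjugation then fixes $E_0$ pointwise and sends $\sqrt{-d} \mapsto -\sqrt{-d}$, so it maps $E$ onto itself, giving $\overline{E} = E$.

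With both hypotheses in hand, Proposition \ref{proposition.density} yields at once that $E \cap S^1$ is dense in $S^1$. I anticipate no genuine obstacle: the only point needing care is the standard but essential fact that the complex conjugation of $\C$ stabilizes a CM subfield, which is exactly the structural property separating CM fields from arbitrary totally imaginary ones.
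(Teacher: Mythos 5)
Your proof is correct and takes exactly the same route as the paper, which simply invokes Proposition \ref{proposition.density} after noting that a CM field satisfies its hypotheses. Your verification of the two hypotheses (total imaginarity giving $E \nsubseteq \R$, and the decomposition $E = E_0(\sqrt{-d})$ with $d$ totally positive giving $\overline{E} = E$) is the standard argument the paper leaves implicit, and it is carried out correctly.
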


\begin{proof}
	A CM field satisfies the hypotheses of Proposition \ref{proposition.density}.
\end{proof}

\section{Twistor sphere of Hodge structures}\label{section.twistorsphere}

Associated with $T$ and an abstract class $\ell$ of positive square, there exists a sphere of related Hodge structures. Here we outline its construction, following \cite[Section 3]{Huy}.

\medskip

If one wishes to keep in mind the geometric picture, they should think of $T$ as the transcendental lattice of a projective complex K3 surface $X$, of $( \ . \ )$ as the restriction of the cup product on $H^2(X,\Q)$ to $T$, and of $\ell$ as an ample class of $X$. Altered Hodge structures (or sub-Hodge structures of these) correspond to transcendental lattices of the K3 surfaces constituting the twistor space of $X$ (compare with Remark \ref{remark.differentpicardnumbers}).

\medskip

Fix a positive integer $d \in \Z_{>0}$. We extend $T$ to the Hodge structure of K3 type $T\oplus \Q \ell$, of dimension $r+1$ and endowed with a form $( \ . \ )$ of signature $(3,r+1-3)$, by declaring $\ell$ to be of type $(1,1)$, orthogonal to $T$, and to satisfy $(\ell.\ell)=d$. $P_T \oplus \R \ell \subseteq T_\R \oplus \R \ell$ is, therefore, a positive $3$-space. Notice that $T \oplus \Q \ell$ is no longer irreducible. The \emph{associated twistor base} is the conic
\[
\PP^1_\ell = \Set{ z=[\sigma'] \in \PP(T^{2,0}\oplus T^{0,2} \oplus \C \ell) | \big( \sigma'.\sigma' \big)=0}.
\]
Points of the conic $\PP^1_\ell$ define different Hodge structures on $T \oplus \Q \ell$, in the following sense. Any $z \in \PP^1_\ell$ defines a Hodge structure of K3 type on $T \oplus \Q \ell$, that we say corresponding to $z$: its $(2,0)$-part is the line corresponding to $z=[\sigma']$, i.e.\ the line $\C \sigma'$, its complex conjugate the $(0,2)$-part, and the $(1,1)$-part is given as the orthogonal complement of the former two. Notice that $( \ . \ )$ is positive definite on $(\C \sigma' \oplus \C \overline{\sigma}') \cap (T_\R \oplus \R \ell) = \R \Re(\sigma') \oplus \R \Im (\sigma')$; this follows immediately from the fact that $( \ . \ )$ is positive definite on $P_T \oplus \R \ell$, which contains $\R \Re(\sigma') \oplus \R \Im (\sigma')$.

\medskip

Mapping $z \in \PP^1_\ell$ to the oriented, positive real plane
\[
P_z = \langle \Re (z), \Im (z) \rangle_\R
\]
yields an identification $\PP^1_\ell \simeq \mathrm{Gr}^\mathrm{po} (P_T \oplus \R \ell)$ with the Grassmannian of oriented, positive planes in $P_T \oplus \R \ell$. The \emph{complex conjugate} $\overline{z}$, i.e.\ the point $\big[\, \overline{\sigma'}\, \big]$, corresponds to the same plane with reversed orientation:
\[
P_{\overline{z}} = \langle \Im (z), \Re (z) \rangle_\R.
\]
Indeed, the basis given by $\{\Re (z), -\Im (z)\}$ induces the same orientation as the basis given by $\{\Im (z), \Re(z)\}$.

\medskip

We define the \emph{period point} of $T$ as $x =[\sigma] \in \PP(T_\C)$. Via the natural inclusion $\PP(T_\C)\subseteq \PP(T_\C \oplus \C \ell)$, we see that both $x=[\sigma]$ and its complex conjugate $\overline{x}=[\overline{\sigma}]$ belong to the conic $\PP^1_\ell$, as $(\sigma.\sigma)=0$ and $(\overline{\sigma}.\overline{\sigma})=0$. Thinking of $P_z$ with its orientation being given as the orthogonal complement of a generator $\alpha_z$ of the line $P_z^\perp \subseteq P_T \oplus \R \ell$ provides a natural identification
\[
\PP^1_\ell \simeq \mathrm{Gr}_2^\mathrm{po} (P_T \oplus \R \ell) \simeq S^2_\ell = \Set{ \alpha \in P_T \oplus \R \ell | (\alpha.\alpha)=1}.
\]
With this identification, $x$ and $\overline{x}$ correspond to the normalizations of $\ell$ and $-\ell$. We think of them as the north and south poles of $S^2_\ell$. The \emph{equator} of the twistor base is the circle
\[
S^1_\ell = \Set{ z \in \PP^1_\ell | \ell \in P_z } \simeq \Set{ \alpha \in S^2_\ell | (\alpha.\ell)=0}.
\]
Indeed, if $\ell \in P_z$ then the line orthogonal to $P_z$ in $P_T \oplus \R \ell$ is orthogonal to $\ell$, and vice versa.

\medskip

If, for $z \in \PP^1_\ell$, we write $z=[\sigma'=a\sigma+b\overline{\sigma}+c\ell]$, for some $a, b, c \in \C$, then $z \in \PP^1_\ell$ if and only if $(\sigma'.\sigma')=0$, i.e.\
\begin{equation}\label{equation.quadr}
2ab (\sigma.\overline{\sigma}) + c^2d=0.
\end{equation}
The only points with $c=0$ are the north and the south poles $x=[\sigma], \overline{x}=[\overline{\sigma}]$. For all the other points, $c \neq 0$ and, after rescaling $\sigma'$, we may assume $c=1$. The following result gives the explicit form of the isomorphism $\PP^1_\ell \simeq S^2_\ell$.

\begin{lemma}\label{lemma.parametrization}
	Pick an element $z=[\sigma'=a\sigma + b\overline{\sigma}+\ell] \in \PP^1_\ell$ corresponding to a point on $S^2_\ell$ different from both poles. Then its image in $S_\ell^2$ is given, in coordinates for the basis $\{\Re (\sigma), \Im (\sigma), \ell\}$, by the point
	\[
	x(a,b)=\frac{v(a,b)}{\lVert v(a,b) \rVert},
	\]
	where
	\[
	v(a,b)= \left( \Re (b-\overline{a}), \Im (b-\overline{a}), \big(a\overline{a}-b\overline{b}\big)\frac{(\sigma.\overline{\sigma})}{2d}\right)
	\]
	and $\lVert \ \ \rVert$ corresponds to $\sqrt{(\ .\ )}$.
\end{lemma}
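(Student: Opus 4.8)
The plan is to compute the identification $\PP^1_\ell \simeq S^2_\ell$ explicitly in the basis $\{\Re(\sigma),\Im(\sigma),\ell\}$ and then recognise the outcome as $v(a,b)$ up to a positive scalar. Writing $\sigma = \Re(\sigma)+i\,\Im(\sigma)$, the relation $(\sigma.\sigma)=0$ gives $(\Re(\sigma).\Re(\sigma))=(\Im(\sigma).\Im(\sigma))=(\sigma.\overline{\sigma})/2=:m$ and $(\Re(\sigma).\Im(\sigma))=0$; since $\ell\perp T$ and $(\ell.\ell)=d$, the form $(\ .\ )$ restricted to $P_T\oplus\R\ell$ is represented in this basis by the diagonal, positive definite matrix $G=\mathrm{diag}(m,m,d)$, with $m,d>0$.

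First I would expand $\sigma'=a\sigma+b\overline{\sigma}+\ell$ into real and imaginary parts. Putting $a=a_1+ia_2$ and $b=b_1+ib_2$, a direct computation yields, in coordinates for $\{\Re(\sigma),\Im(\sigma),\ell\}$,
\[
\Re(\sigma')=(a_1+b_1,\ b_2-a_2,\ 1),\qquad \Im(\sigma')=(a_2+b_2,\ a_1-b_1,\ 0),
\]
which span the oriented positive plane $P_z$.

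Next I would produce the generator $\alpha_z$ of $P_z^{\perp}\subseteq P_T\oplus\R\ell$. As orthogonality is measured by $G$, the vector $\alpha_z$ is characterised, up to scale, by $G\alpha_z$ being Euclidean-orthogonal to both $\Re(\sigma')$ and $\Im(\sigma')$, that is $\alpha_z=G^{-1}\big(\Re(\sigma')\times\Im(\sigma')\big)$. Evaluating the ordinary cross product gives $\Re(\sigma')\times\Im(\sigma')=(b_1-a_1,\ a_2+b_2,\ a\overline{a}-b\overline{b})$; applying $G^{-1}=\mathrm{diag}(1/m,1/m,1/d)$ and substituting $m=(\sigma.\overline{\sigma})/2$ then gives $\alpha_z=\tfrac{1}{m}\,v(a,b)$. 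Since $m>0$, the normal $\alpha_z$ is a \emph{positive} multiple of $v(a,b)$, so after normalisation the image on $S^2_\ell$ equals $\pm v(a,b)/\lVert v(a,b)\rVert$.

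The hard part will be pinning down that the sign is $+$, i.e.\ checking the orientation convention. By that convention $\alpha_z$ is the generator for which $\{\Re(\sigma'),\Im(\sigma'),\alpha_z\}$ is positively oriented with respect to the ambient orientation of $P_T\oplus\R\ell$ fixed by $\{\Re(\sigma),\Im(\sigma),\ell\}$. Setting $c=\Re(\sigma')\times\Im(\sigma')$, the scalar triple product identity gives
\[
\det\big[\,\Re(\sigma')\mid\Im(\sigma')\mid\alpha_z\,\big]=(\Re(\sigma')\times\Im(\sigma'))\cdot\alpha_z=c^{\top}G^{-1}c>0,
\]
where positivity follows from the positive definiteness of $G^{-1}$. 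Hence the oriented unit normal is $+v(a,b)/\lVert v(a,b)\rVert$, which is precisely the asserted formula; as a consistency check, letting $a\to\infty$ with $b=0$ recovers the north pole $x=[\sigma]$ together with the normalisation of $+\ell$, as required.
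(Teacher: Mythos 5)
Your proof is correct and lands on the same underlying identification as the paper's proof — the image of $z$ is the positively-oriented unit normal to $P_z=\langle\Re(\sigma'),\Im(\sigma')\rangle_\R$, computed in the orthogonal basis $\{\Re(\sigma),\Im(\sigma),\ell\}$, and your expansion of $\Re(\sigma')$ and $\Im(\sigma')$ agrees with the paper's. The execution, however, differs in two genuine ways. Where the paper \emph{verifies} that the given vector $v(a,b)$ satisfies $(v.\Re(\sigma'))=(v.\Im(\sigma'))=0$ by expanding the bilinear form, you \emph{derive} the normal line as $G^{-1}\big(\Re(\sigma')\times\Im(\sigma')\big)$, which produces $v(a,b)$ up to the positive factor $1/m$ with no guessing. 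More substantially, for the orientation the paper computes the $3\times 3$ determinant explicitly and finds $\big(a\overline{a}-b\overline{b}\big)^2\frac{(\sigma.\overline{\sigma})}{2d}+\big(a-\overline{b}\big)\overline{\big(a-\overline{b}\big)}$, whose positivity requires $a\neq\overline{b}$ (Remark \ref{remark.anobconj}); you replace this computation by the structural identity $\det\big[\Re(\sigma')\mid\Im(\sigma')\mid\alpha_z\big]=c^{\top}G^{-1}c$, $c=\Re(\sigma')\times\Im(\sigma')$, so positivity follows from positive definiteness of $G^{-1}$. This is cleaner and more general: it shows that in any positively-oriented orthogonal coordinates the oriented unit normal is always a positive multiple of $G^{-1}$ applied to the cross product, with no dependence on the specific entries. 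Two small points to tighten. First, strict positivity of $c^{\top}G^{-1}c$ still needs $c\neq 0$, i.e.\ that $\Re(\sigma')$ and $\Im(\sigma')$ are linearly independent; this is exactly the condition $a\neq\overline{b}$ of Remark \ref{remark.anobconj}, so you are using the same input as the paper and should invoke it explicitly. Second, your closing consistency check is only heuristic: points with $b=0$ and $c=1$ violate (\ref{equation.quadr}), so the limit $a\to\infty$ is taken along points that do not lie on the conic $\PP^1_\ell$; it is a reasonable sanity check on the formula for $v(a,b)$, but it is not part of the argument.
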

\begin{remark}
	The basis $\{\Re (\sigma), \Im (\sigma), \ell\}$ is orthogonal for $( \ . \ )$; nonetheless, it is not normalized.
\end{remark}
\begin{remark}\label{remark.anobconj}
	If we assume $a=\overline{b}$, then, if $\sigma'\neq 0$,
	\[
	(\sigma'.\sigma')=2a\overline{a}(\sigma.\overline{\sigma})+d>0,
	\]
	as $(\sigma. \overline{\sigma})>0$. Thus $z=[\sigma'] \in \PP^1_\ell$ forces $a \neq \overline{b}$.
\end{remark}
\begin{proof}[Proof of the Lemma]
	Consider the plane $P_z=\langle \Re (\sigma'), \Im (\sigma') \rangle_\R$. Let $\alpha_z$ be the unique element of $P_T \oplus \R \ell$ of norm $1$ that is positively orthogonal to $P_z$. We are proving that $v=v(a,b)$ gives the coordinates of a vector in $P_T \oplus \R \ell$ that is positively aligned to $\alpha_z$, and this will be enough. First of all, we check the orthogonality relations. Note that
	\[
	\Re(\sigma')=\frac{\sigma'+\overline{\sigma'}}{2}=\frac{a+\overline{b}}{2}\sigma + \frac{b+\overline{a}}{2}\overline{\sigma} + \ell =\Re(b+\overline{a})\Re(\sigma)+\Im(b+\overline{a})\Im(\sigma)+\ell
	\]
	and
	\[
	\Im(\sigma')=\frac{\sigma'-\overline{\sigma'}}{2i}=\frac{a-\overline{b}}{2i}\sigma + \frac{b-\overline{a}}{2i}\overline{\sigma}=\Im\big(a-\overline{b}\big)\Re(\sigma)+\Re\big(a-\overline{b}\big)\Im(\sigma).
	\]
	Recall that
	\[
	(\Re(\sigma).\Re(\sigma))=(\Im(\sigma).\Im(\sigma))=\frac{(\sigma.\overline{\sigma})}{2}, \quad (\Re(\sigma).\Im(\sigma))=0.
	\]
	Then, simple computations give
	\[
	(v.\Re(\sigma'))=\frac{(\sigma.\overline{\sigma})}{2}\big(\Re(b-\overline{a})\Re(b+\overline{a})+\Im(b-\overline{a})\Im(b+\overline{a})+\big(a\overline{a}-b\overline{b}\big)\big)=0
	\]
	and
	\[
	(v.\Im(\sigma'))=\frac{(\sigma.\overline{\sigma})}{2}\big(\Re(b-\overline{a})\Im\big(a-\overline{b}\big) + \Im(b-\overline{a})\Re\big(a-\overline{b}\big)\big)=0.
	\]
	The positive alignment follows from the positivity of the discriminant of the matrix
	\[
	\begin{bmatrix}
	\Re(b+\overline{a}) & \Im(b+\overline{a}) & 1 \\
	\Im\big(a-\overline{b}\big) & \Re\big(a-\overline{b}\big) & 0 \\
	\Re (b-\overline{a}) & \Im (b-\overline{a}) & \big(a\overline{a}-b\overline{b}\big)\frac{(\sigma.\overline{\sigma})}{2d}
	\end{bmatrix}
	\]
	which is
	\[
	\big(a\overline{a}-b\overline{b}\big)^2\frac{(\sigma.\overline{\sigma})}{2d}+\big(a-\overline{b}\big)\overline{\big(a-\overline{b}\big)}>0
	\]
	as $a\neq \overline{b}$, thanks to Remark \ref{remark.anobconj}.
\end{proof}

\begin{remark}\label{remark.N(a)=N(b)}
	The equator $S^1_\ell$ is defined by the condition $|a|=|b|$, or equivalently $a\overline{a}=b\overline{b}$, for $z=[\sigma'=a \sigma + b \overline{\sigma} + \ell]$.
\end{remark}

\begin{lemma}\label{lemma.twopoints}
	Choose a non-zero element $\ell' \in T \oplus \Q \ell$. Then there are exactly two points $z, z' \in \PP^1_\ell$ such that $\ell'$ is orthogonal to $z$ and $z'$, i.e.\ $\ell' \in P_z^\perp$ and $\ell' \in P_{z'}^\perp$. Moreover, $z'=\overline{z}$, and $z$ and $\overline{z}$ correspond to antipodal points on $S^2_\ell$ via the isomorphism $\PP^1_\ell \simeq S^2_\ell$.
\end{lemma}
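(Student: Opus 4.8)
The plan is to read the orthogonality condition through the isomorphism $\PP^1_\ell \simeq S^2_\ell$, under which a point $z$ is encoded by the unique unit vector $\alpha_z \in P_T \oplus \R \ell$ that is positively orthogonal to the plane $P_z = \langle \Re(z), \Im(z)\rangle_\R$. Since $P_z$ sits inside the nondegenerate positive $3$-space $P_T \oplus \R \ell$, the condition $\ell' \in P_z^\perp$ is equivalent to $\pi(\ell') \perp P_z$, where $\pi$ denotes the orthogonal projection of $T_\R \oplus \R \ell$ onto $P_T \oplus \R \ell$. As $\alpha_z$ spans the orthogonal complement of $P_z$ within that $3$-space, this reformulates $\ell' \in P_z^\perp$ as the single requirement $\pi(\ell') \in \R\, \alpha_z$.

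The crux of the argument, and the step I expect to be the main obstacle, is to rule out the degenerate case $\pi(\ell') = 0$, which would make $\ell'$ orthogonal to $P_z$ for every $z$ and produce infinitely many solutions. I would handle this using irreducibility of $T$. Writing $\ell' = t + q\ell$ with $t \in T$ and $q \in \Q$, the hypothesis $\pi(\ell') = 0$ forces $\ell'$ to be orthogonal to the basis $\{\Re(\sigma), \Im(\sigma), \ell\}$ of $P_T \oplus \R\ell$. Orthogonality to $\ell$ gives $(\ell'.\ell) = q\,d = 0$, hence $q=0$ and $\ell' = t \in T$; orthogonality to $\Re(\sigma)$ and $\Im(\sigma)$ gives $(t.\sigma) = 0$. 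By the footnote fact for irreducible $T$, namely $(\sigma.\gamma)=0 \iff \gamma = 0$, this yields $t = 0$, contradicting $\ell' \neq 0$. Therefore $\pi(\ell')$ is a nonzero vector of $P_T \oplus \R\ell$.

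With $\pi(\ell') \neq 0$ in hand, the rest is formal. The unit vectors of $P_T \oplus \R\ell$ lying in the line $\R\,\pi(\ell')$ are exactly the two antipodal vectors $\pm\, \pi(\ell')/\lVert \pi(\ell') \rVert$, so there are precisely two points $z, z' \in \PP^1_\ell$ with $\ell'$ orthogonal to $P_z$ and $P_{z'}$, proving the ``exactly two'' assertion. Finally, since complex conjugation $z \mapsto \overline{z}$ reverses the orientation of $P_z$ and hence flips the positively orthogonal vector $\alpha_z \mapsto -\alpha_z$ (as already recorded for the poles, where $x, \overline{x}$ correspond to the normalizations of $\ell$ and $-\ell$), the antipode of $z$ on $S^2_\ell$ is exactly $\overline{z}$. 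Hence $z' = \overline{z}$, and $z, \overline{z}$ are antipodal on $S^2_\ell$, as claimed.
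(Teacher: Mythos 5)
Your proof is correct, and it takes a genuinely different route from the paper's. The paper argues projectively: for $\ell' \notin \Q\ell$, the hyperplane $\ell'^{\perp}$ meets $T^{2,0}\oplus T^{0,2}\oplus\C\ell$ in a $\C$-plane, hence a line in $\PP(T^{2,0}\oplus T^{0,2}\oplus\C\ell)$, which cuts the smooth conic $\PP^1_\ell$ in two points counted with multiplicity; the tangency (double-point) case is excluded because solutions come in conjugate pairs $z,\overline{z}$ with $z\neq\overline{z}$ forced by $a\neq\overline{b}$ (Remark \ref{remark.anobconj}); antipodality is then read off from the explicit coordinates of Lemma \ref{lemma.parametrization}. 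You instead work in the real positive definite $3$-space $P_T\oplus\R\ell$: through the identification $z\leftrightarrow\alpha_z$, orthogonality of $\ell'$ to $P_z$ becomes $\alpha_z\in\R\,\pi(\ell')$, and a line through the origin of a positive definite $3$-space carries exactly two unit vectors. Your route buys uniformity -- no separate case for $\ell'\in\Q\ell$ (the poles are handled by the same argument) and no tangency exclusion, since the count of two and the antipodality emerge together from the single line $\R\,\pi(\ell')$; for the last step your real justification is the paper's remark that $P_{\overline{z}}$ is $P_z$ with reversed orientation (the poles are just a consistency check), which is indeed enough, as reversing the orientation of the plane flips the positively orthogonal generator. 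The paper's route buys direct compatibility with the equational framework (\ref{equation.quadr}), (\ref{equation.lin}) that the rest of the paper manipulates, and it exhibits the solution set explicitly as the conjugate pair $\{z,\overline{z}\}$, the form in which the lemma is reused later (e.g.\ in the countability argument for the Noether-Lefschetz locus). Note finally that both arguments consume irreducibility of $T$ at the same spot: your exclusion of $\pi(\ell')=0$ invokes the footnote fact that $(\sigma.\gamma)=0$ implies $\gamma=0$, which is also what underlies the paper's claim that $\ell'^{\perp}$ cannot contain the whole $3$-space.
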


\begin{proof}
	The case $\ell'=\ell$ has already been discussed; suppose that $\ell' \notin \Q \ell$. The orthogonal complement of $\ell'$ in $T_\C \oplus \C \ell$ does not contain the whole $T^{2,0} \oplus T^{0,2} \oplus \C \ell$ (since the form $( \ . \ )$ is non-degenerate on this last vector space). Therefore $\ell'^\perp \cap (T^{2,0} \oplus T^{0,2} \oplus \C \ell)$ is a $\C$-plane in $T^{2,0} \oplus T^{0,2} \oplus \C \ell$. Passing to the projective spaces, $\ell'^\perp \cap (T^{2,0} \oplus T^{0,2} \oplus \C \ell)$ defines a line in $\PP(T^{2,0} \oplus T^{0,2} \oplus \C \ell)$, which cuts the conic $\PP^1_\ell$ in two distinct points, or one single point (with double multiplicity). On the other hand, the second case is not admissible: if $z=[\sigma'=a\sigma + b\overline{\sigma} +\ell]$ satisfies conditions (\ref{equation.quadr}) and (\ref{equation.lin}), then
	\[
	\overline{z}=\left[\overline{\sigma'}=\overline{b}\sigma + \overline{a} \overline{\sigma} +\ell\right]
	\]
	satisfies both equations as well. Besides, as already pointed out in Remark \ref{remark.anobconj}, $a \neq \overline{b}$, so that $z \neq \overline{z}$. Then $z, \overline{z}$ are the required points. Lastly, note that $z$ and $\overline{z}$ are antipodal on the sphere $\PP^1_\ell \simeq S^2_\ell$; for, compare with the isomorphism of Lemma \ref{lemma.parametrization}.
\end{proof}

\section{Picard jump on the equator}\label{section.excessive}

We focus our attention on the Noether-Lefschetz locus of the equator $S^1_\ell \subseteq \PP^1_\ell$, its points of Picard jump and their period fields.

\subsection{Excessive Picard jump values}\label{section.values}

We have already remarked that the original extended Hodge structure of K3 type on $T \oplus \Q \ell$ is no longer irreducible, being $\ell$ a $(1,1)$-class. Given $z \in \PP^1_\ell$, we define the \emph{Picard number} $\rho_z$ of the Hodge structure corresponding to $z$ to be the $\Q$-dimension of the space of $(1,1)$-classes of $T \oplus \Q \ell$, that is
\[
\rho_z = \dim_\Q \big(P_z^\perp \cap (T \oplus \Q \ell)\big).
\]
For instance, as the original $T$ is irreducible, we deduce that $\rho_x = \rho_{\overline{x}} = 1$.

\begin{definition}
	We say that a point $z \in \PP^1_\ell$ is of \emph{Picard jump} if $\rho_z \ge 1$, of \emph{excessive Picard jump} if $\rho_z > 1$. We call \emph{Noehter-Lefschetz locus} the set of points $z \in \PP^1_\ell$ of Picard jump.
\end{definition}

We are interested in understanding how points of Picard jump distribute on the sphere $\PP^1_\ell \simeq S^2_\ell$, and the possible relative Picard numbers. Huybrechts proved the following result \cite[Proposition 3.2]{Huy}.

\begin{proposition}\label{proposition.jump}
	Assume that $T$ is a polarized irreducible Hodge structure of K3 type. Then, for the twistor base $\PP^1_\ell \simeq S^2_\ell$, one has:
	\begin{enumerate}
		\item the set $\Set{z \in \PP^1_\ell | \rho_z \ge 1}$ is countable and dense (in the classical topology);
		\item the set $\Set{z \in \PP^1_\ell | \rho_z > 1}$ is at most countable and contained in the equator $S^1_\ell$.
	\end{enumerate}
\end{proposition}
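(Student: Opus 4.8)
The plan is to reduce both assertions to statements about the intersection of a rational subspace $T \oplus \Q\ell$ with the real positive planes $P_z$ parameterized by the sphere $\PP^1_\ell \simeq S^2_\ell$. Using the identification from Lemma \ref{lemma.twopoints}, each nonzero class $\ell' \in T \oplus \Q\ell$ is orthogonal to exactly two antipodal points $z, \overline{z}$; conversely, $\rho_z \ge 1$ exactly when $P_z^\perp \cap (T \oplus \Q\ell) \neq 0$, i.e.\ when some nonzero rational class is orthogonal to $P_z$. The first step is therefore to observe that the Noether-Lefschetz locus is precisely the image of the countable set $(T \oplus \Q\ell) \setminus \{0\}$ under the two-to-one map $\ell' \mapsto \{z, \overline{z}\}$ of Lemma \ref{lemma.twopoints}. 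Since a countable set maps to a countable set, countability of $\{z : \rho_z \ge 1\}$ follows at once, and this simultaneously bounds the excessive locus of part (2).

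For density in part (1), I would argue as follows. Given an arbitrary point $z_0 \in \PP^1_\ell$ and a small neighbourhood, it suffices to find a rational class $\ell' \in T \oplus \Q\ell$ whose orthogonal antipodal pair lands inside that neighbourhood. Equivalently, under $\PP^1_\ell \simeq S^2_\ell = \{\alpha \in P_T \oplus \R\ell : (\alpha.\alpha)=1\}$, I want to approximate the unit vector $\alpha_{z_0}$ generating $P_{z_0}^\perp$ by (a scalar multiple of) a rational vector in $T \oplus \Q\ell$. Because $T \oplus \Q\ell$ is a $\Q$-vector space whose real span $T_\R \oplus \R\ell$ contains the real three-space $P_T \oplus \R\ell$, rational points are dense in $T_\R \oplus \R\ell$, and projecting a rational approximation of $\alpha_{z_0}$ onto the positive cone and normalizing produces a point of $S^2_\ell$ arbitrarily close to $\alpha_{z_0}$ that is orthogonal to a genuine rational class. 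Care must be taken that the approximating rational class has positive square so that it actually defines a point of the conic via Lemma \ref{lemma.twopoints}, but this is an open condition satisfied near any $\alpha_{z_0}$ of norm one.

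For the containment in part (2), the key is the signature. The subspace $P_z^\perp \cap (T_\R \oplus \R\ell)$ is the orthogonal complement of the positive plane $P_z$ inside the three-space $P_T \oplus \R\ell$ on which $(\,.\,)$ is positive definite; hence $(\,.\,)$ restricted to $P_z^\perp$ (within that three-space) is positive definite and one-dimensional. If $\rho_z \ge 2$, then $P_z^\perp \cap (T \oplus \Q\ell)$ contains a rational plane, so its real span would be a two-dimensional positive subspace disjoint from $P_z$; but $P_z^\perp$ meets the positive three-space $P_T \oplus \R\ell$ in only a line, forcing part of this rational plane to lie outside $P_T \oplus \R\ell$ where the form is negative definite, giving an isotropic or negative direction inside $P_z^\perp \cap (T\oplus\Q\ell)$. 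The step that requires real care is pinning down \emph{why} the existence of such a negative direction forces $\ell \in P_z$, i.e.\ forces $z$ onto the equator: I expect the main obstacle to be a clean signature count showing that a rank-two $(1,1)$-sublattice can occur only when the $3$-space $P_z \oplus (P_z^\perp \cap (T\oplus\Q\ell)_\R)$ degenerates against $\ell$, which by Remark \ref{remark.N(a)=N(b)} is exactly the equatorial condition $|a|=|b|$.
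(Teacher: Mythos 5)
Your part (1) is correct and follows the same route as the paper: countability is exactly the paper's argument via Lemma \ref{lemma.twopoints} (the Noether--Lefschetz locus is the image of the countable set $(T\oplus\Q\ell)\setminus\{0\}$ under $\ell'\mapsto\{z,\overline{z}\}$), and your density argument works because the point of $S^2_\ell$ orthogonal to a rational class $\ell'$ is the normalization of the orthogonal projection of $\ell'$ onto $P_T\oplus\R\ell$, so approximating $\alpha_{z_0}$ by rational classes and projecting does produce nearby points of Picard jump. (One small remark: the worry about positive square is unnecessary, since Lemma \ref{lemma.twopoints} applies to \emph{any} nonzero rational class; positivity is only relevant later, for algebraicity.)

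Part (2), however, has a genuine gap, which you flag yourself: you never prove that $\rho_z>1$ forces $z$ onto the equator, and the mechanism you propose --- a signature count producing ``isotropic or negative directions'' inside $P_z^\perp\cap(T\oplus\Q\ell)$ --- cannot close it. For \emph{every} $z\in\PP^1_\ell$, equatorial or not, the real orthogonal complement $P_z^\perp\cap(T_\R\oplus\R\ell)$ has signature $(1,r-2)$, so negative directions are present as soon as $r\ge 3$; their existence carries no information about the position of $z$ on the sphere. The equator is not detected by the real quadratic form but by a rational condition involving $\ell$. The correct argument is: if $\rho_z\ge 2$, then the rational space $P_z^\perp\cap(T\oplus\Q\ell)$, having $\Q$-dimension at least two, must meet the codimension-one rational subspace $T\subseteq T\oplus\Q\ell$ nontrivially, yielding a nonzero $\ell'\in T$ orthogonal to $P_z$; and orthogonality to a nonzero class of $T$ forces $z\in S^1_\ell$. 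This last implication is precisely Lemma \ref{lemma.equatorT}: normalizing $(\sigma.\ell')=1$, equation (\ref{equation.lin}) gives $b=-a$, equation (\ref{equation.quadr}) then forces $a\in\R$, and one computes $\ell=\Re(\sigma')\in P_z$. Alternatively, in the language of your own density argument: $\alpha_z$ is proportional to the projection $\pi(\ell')$ of $\ell'$ onto $P_T\oplus\R\ell$, and $(\pi(\ell').\ell)=(\ell'.\ell)=0$ because $\ell\in P_T\oplus\R\ell$ and $\ell'\in T$, so $(\alpha_z.\ell)=0$, which is the equatorial condition. Replacing the signature heuristic by this hyperplane-intersection step plus Lemma \ref{lemma.equatorT} completes your outline.
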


The fact that the Noether-Lefschetz locus is countable is a particular instance of a more general fact (see \cite[Chapter 6, Proposition 2.9]{HuyK3}). However, there is a very simple argument that applies in this case. Assume that $z$ is of Picard jump, and let $\ell' \in T \oplus \Q \ell$ be a $(1,1)$-class for the Hodge structure induced by $z$. Then $\ell'^\perp$ identifies a line in $\PP(T^{2,0}\oplus T^{0,2} \oplus \C \ell)$, that cuts the conic $\PP^1_\ell$ in two points: $z$ and $\overline{z}$. Thus, $\ell'$ is a $(1,1)$-class only for finitely many $z \in \PP^1_\ell$, and therefore the Noether-Lefschetz locus is countable, as $T \oplus \Q \ell$ is.

\begin{center}
	\includegraphics[scale=0.5]{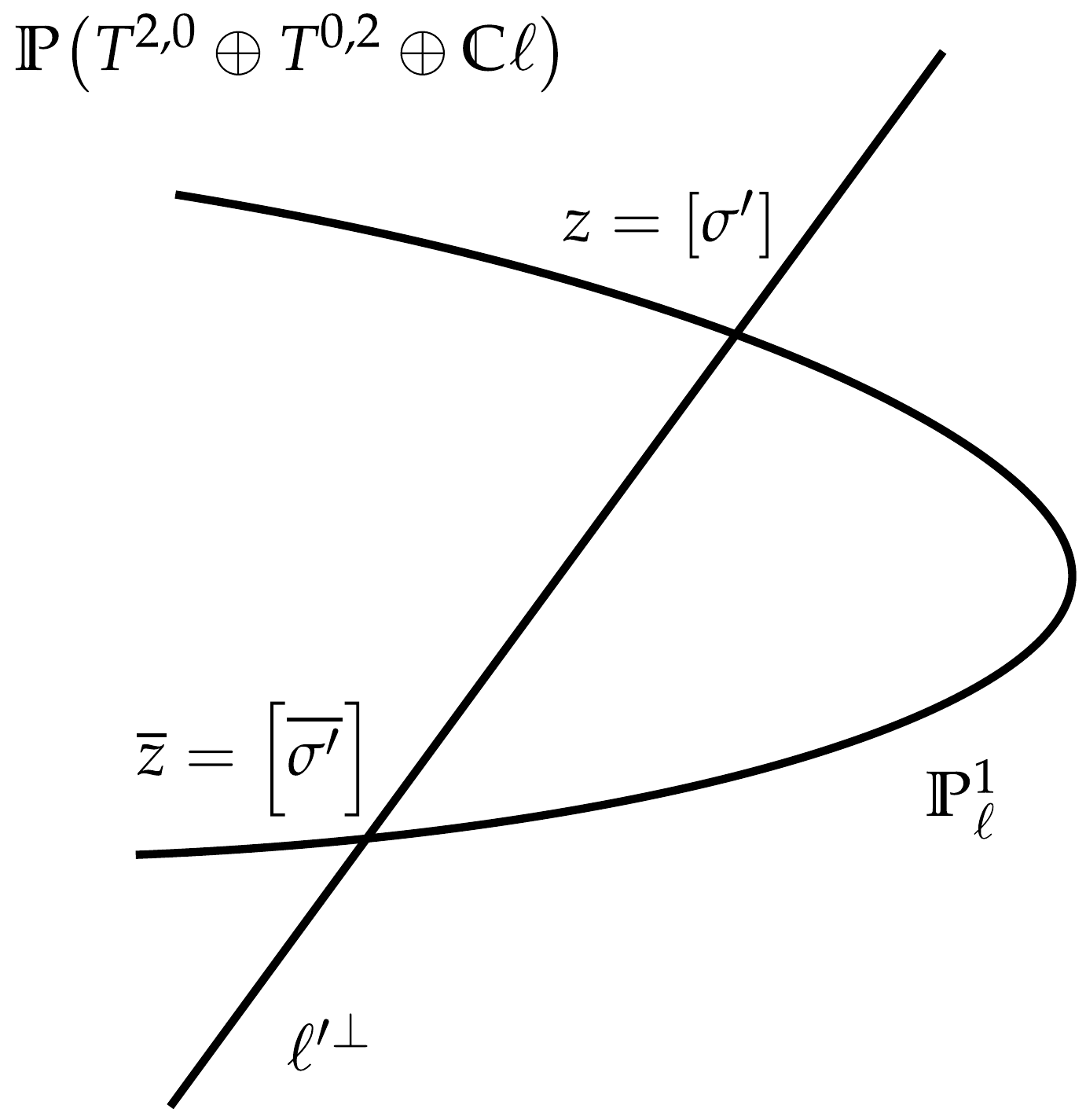}
\end{center}

Assume that $z=[a\sigma + b\overline{\sigma} + c \ell]$. The condition defining $\ell'^\perp$ can be explicitly translated as
\begin{equation}\label{equation.lin}
a(\sigma. \ell')+b(\overline{\sigma}.\ell')+(\ell.\ell')=0.
\end{equation}

\begin{lemma}\label{lemma.equatorT}
	Assume that $\ell' \in T \oplus \Q \ell$, $\ell' \notin \Q \ell$ and choose $z$ to be one of the two points orthogonal to $\ell'$. Then the point $z$ is contained in the equator $S^1_\ell$ if and only if $\ell' \in T$.
\end{lemma}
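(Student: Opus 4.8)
The plan is to translate both the orthogonality condition defining $z$ and the equatorial condition into explicit relations between the coordinates $a,b$ of $z=[a\sigma+b\overline{\sigma}+\ell]$, and then to compare them. First I would split $\ell'=t+q\ell$ with $t\in T$ and $q\in\Q$; since $\ell'\notin\Q\ell$ we have $t\neq 0$. I would begin by checking that neither pole lies in $\ell'^\perp$: because $\ell\perp T$ and $T$ is irreducible, $(\sigma.\ell')=(\sigma.t)\neq 0$ (by the irreducibility footnote of Section~\ref{section.preliminaries}), and likewise $(\overline{\sigma}.\ell')\neq 0$. Hence each of the two points orthogonal to $\ell'$ — there are exactly two, by Lemma~\ref{lemma.twopoints} — has $c\neq 0$ and may be normalised to $c=1$.

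Next I would feed $c=1$ into the two defining equations. Setting $p=(\sigma.\overline{\sigma})>0$ and $w=(\sigma.t)$ — so that $(\overline{\sigma}.\ell')=(\overline{\sigma}.t)=\overline{(\sigma.t)}=\overline{w}$ because $t$ is real — the orthogonality equation \eqref{equation.lin} becomes $aw+b\overline{w}+qd=0$, while the conic equation \eqref{equation.quadr} becomes $2abp+d=0$. The latter shows that $ab=-d/(2p)$ is a nonzero real number, so $a,b\neq 0$ and $w\neq 0$.

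The key step is then the following. By Remark~\ref{remark.N(a)=N(b)}, the condition $z\in S^1_\ell$ is equivalent to $a\overline{a}=b\overline{b}$. Assuming this and combining with $\abs{a}\abs{b}=d/(2p)$ forces $a\overline{a}=d/(2p)$, whence $ab=-a\overline{a}$ and therefore $b=-\overline{a}$. Substituting $b=-\overline{a}$ into the linear equation gives $aw-\overline{aw}+qd=0$, that is $2i\,\Im(aw)=-qd$; the left-hand side is purely imaginary and the right-hand side real, so $qd=0$ and hence $q=0$ (as $d>0$), i.e.\ $\ell'=t\in T$. Conversely, if $\ell'\in T$ then $q=0$ and the linear equation reads $aw=-b\overline{w}$; taking absolute values and using $w\neq 0$ gives $\abs{a}=\abs{b}$, i.e.\ $z\in S^1_\ell$. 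Since the equatorial condition is unchanged under $z\mapsto\overline{z}$, the conclusion does not depend on which of the two orthogonal points was chosen.

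The computation is short, so the only point I would flag as the crux is the clean observation that the equatorial constraint forces the sharp relation $b=-\overline{a}$, not merely $\abs{a}=\abs{b}$: it is precisely this that lets the orthogonality equation separate into its real and imaginary parts and isolate the coefficient $q$. I would also note, for consistency, that $b=-\overline{a}$ amounts to $a=-\overline{b}$, which is genuinely distinct from the excluded relation $a=\overline{b}$ of Remark~\ref{remark.anobconj}, so such a point $z$ does indeed lie on $\PP^1_\ell$.
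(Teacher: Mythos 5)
Your proof is correct, but it is not the paper's argument; the difference is clearest in the forward implication. The paper proves that $z\in S^1_\ell$ forces $\ell'\in T$ coordinate-freely: by the very definition of the equator, $\ell\in P_z\subseteq\langle\sigma',\overline{\sigma'}\rangle_\C$, so $\ell$ is a $\C$-combination of $\sigma'$ and $\overline{\sigma'}$, and $(\ell.\ell')=0$ follows at once from $(\sigma'.\ell')=0$ and its conjugate. You instead run both implications through the coordinate picture of Lemma~\ref{lemma.parametrization} and Remark~\ref{remark.N(a)=N(b)}: after normalizing $c=1$ (justified by checking, via the irreducibility footnote, that neither pole is orthogonal to $\ell'$ --- a point the paper leaves implicit), you combine the conic equation (\ref{equation.quadr}) with $\abs{a}=\abs{b}$ to obtain the sharp relation $b=-\overline{a}$, and then split the linear equation (\ref{equation.lin}) into real and imaginary parts to force $q=0$. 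For the converse the two arguments are closer in spirit but use different normalizations: the paper rescales $\sigma$ so that $(\sigma.\ell')=1$, deduces $b=-a$, $a\in\R$, $c=1$, and exhibits $\ell=\Re(\sigma')\in P_z$, whereas you keep $\sigma$ fixed and read $\abs{a}=\abs{b}$ off the linear equation by taking absolute values, using $w=(\sigma.t)\neq 0$ (again from irreducibility). The paper's route is shorter and, in the forward direction, independent of the parametrization lemma; yours buys a uniform two-equation setup serving both directions, an explicit verification that the conclusion does not depend on the choice between $z$ and $\overline{z}$, and the relation $b=-\overline{a}$ for equatorial points orthogonal to classes of $T$, which the paper itself rederives later (proof of Proposition~\ref{proposition.jumpdenseoneq} and Section~\ref{section.actionequator}).
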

\begin{proof}
	Write $z=[\sigma']$. Assume that $z \in S^1_\ell$, i.e.\ $\ell \in P_{z}=\langle \Re(\sigma'), \Im(\sigma') \rangle_\R \subseteq \big\langle \sigma', \overline{\sigma'}\big\rangle_\C$. Then $\ell=a\sigma' +b \overline{\sigma'}$ for some $a, b \in \C$; hence $(\ell.\ell')=0$ since $(\sigma'.\ell')=0$, which means $\ell' \in T$. Conversely, assume that $\ell' \in T$, i.e.\ $(\ell.\ell')=0$. Write $\sigma'=a\sigma+b\overline{\sigma}+c\ell$. After rescaling $\sigma$, we may assume that $(\sigma.\ell')=1$, so that $(\overline{\sigma}.\ell')=\overline{(\sigma.\ell')}=1$ as well. The condition $(\sigma'.\ell')=0$ implies,
	\[
	a(\sigma.\ell')+b(\overline{\sigma}.\ell')+c(\ell.\ell')=0, \ \text{i.e.} \ b=-a.
	\]
	Therefore, the condition (\ref{equation.quadr})
	\[
	-2a^2 (\sigma.\overline{\sigma})+c^2d=0
	\] implies $c\neq 0$ (and therefore we may assume $c=1$) and $a \in \R$ (since $d>0$ and $(\sigma.\overline{\sigma})>0$). Therefore
	\[
	\overline{\sigma'}=a\overline{\sigma}-a\sigma +\ell
	\]
	and
	\[
	\ell=\frac{\sigma'+\overline{\sigma'}}{2} = \Re (\sigma')
	\]
	and the proof is concluded, as $\ell \in P_{z}$.
\end{proof}

If the original Hodge structure $T$ has CM, all points of Picard jump on the equator share the same Picard number.

\begin{proposition}\label{proposition.picardjumpeq}
	Suppose that $T$ is a polarized irreducible Hodge structure of K3 type and CM, and consider its sphere of related Hodge structures. Assume that $z \in S^1_\ell$ is a point of Picard jump on the equator. Then $\rho_z=r/2$.
\end{proposition}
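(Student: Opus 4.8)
The plan is to reduce the computation of $\rho_z$ to a statement about the CM field $K_T$ and then invoke Corollary~\ref{corollary.imaginary}. First I would use Lemma~\ref{lemma.equatorT} to control the shape of the Picard lattice on the equator: since $z \in S^1_\ell$ means $\ell \in P_z$ and $P_z \perp P_z^\perp$, any rational $(1,1)$-class $\ell' = t + s\ell$ (with $t \in T$, $s \in \Q$) satisfies $(\ell.\ell') = sd = 0$, hence $s=0$ and $\ell' \in T$. Thus
\[
\rho_z = \dim_\Q\Set{t \in T | (\sigma'.t)=0},
\]
where $\sigma' = a\sigma + b\overline{\sigma} + \ell$ represents $z$ (so $|a|=|b|$ by Remark~\ref{remark.N(a)=N(b)}, and $a,b \neq 0$ because $\sigma'$ is isotropic). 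For $t \in T$ one has $(t.\ell)=0$, $(t.\sigma) = (\sigma.t)$ and $(t.\overline{\sigma}) = \overline{(\sigma.t)}$ (the latter because $t$ is real and the form is $\C$-bilinear). Writing $\psi(t) := (\sigma.t)$, the orthogonality condition (\ref{equation.lin}) becomes the single relation $a\,\psi(t) + b\,\overline{\psi(t)} = 0$.

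Next I would feed in the complex multiplication hypothesis through Lemma~\ref{lemma.CMeasy}. The period map $\psi\colon T \to \C$ is $\Q$-linear and injective (as $(\sigma.t)=0$ forces $t=0$), and by part (2) of that lemma its image is $\psi(T) = x_1 K_T$, where $x_1 = (\sigma.\gamma_1)$ for a chosen basis element $\gamma_1$. Transporting the relation above along the $\Q$-linear isomorphism $t \mapsto k := \psi(t)/x_1$, the Picard lattice is identified with
\[
S = \Set{k \in K_T | a x_1 k + b\,\overline{x_1}\,\overline{k}=0} = \Set{k \in K_T | \overline{k} = \mu k}, \qquad \mu := -\frac{a x_1}{b\,\overline{x_1}},
\]
and the equator condition $|a|=|b|$ gives $|\mu|=1$. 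Hence $\rho_z = \dim_\Q S$.

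Finally I would recognize $S$ as a real line intersected with the field. Since $z$ is a point of Picard jump, $\rho_z \ge 1$, so there is a nonzero $k_0 \in S$, i.e.\ $\mu = \overline{k_0}/k_0$. For any $k \in K_T$ the relation $\overline{k}=\mu k$ is then equivalent to $\overline{k/k_0}=k/k_0$, that is, to $k \in \R\cdot k_0$; therefore $S = (\R\cdot k_0)\cap K_T$. Corollary~\ref{corollary.imaginary}, applied to the CM field $K_T$ (of degree $r$) and the nonzero element $k_0$, then yields $\dim_\Q S = r/2$, so $\rho_z = r/2$, as claimed. The delicate points are the bookkeeping of complex conjugates in passing from the geometric orthogonality to the algebraic relation $\overline{k}=\mu k$, and the observation that the existence of a \emph{single} $(1,1)$-class already pins down the entire solution space as a full real multiple of $k_0$; this is precisely where the CM assumption, via Corollary~\ref{corollary.imaginary}, forces the rigid value $r/2$ rather than merely a lower bound.
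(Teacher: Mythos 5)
Your proof is correct, and it reaches the conclusion by a genuinely different route from the proof printed in the paper. After the common first step (Picard classes on the equator lie in $T$; you re-derive the relevant direction of Lemma~\ref{lemma.equatorT} directly from $\ell \in P_z$), the paper does not pass through the periods at all: it observes that every $\beta \in K_T^0 = K_T \cap \R$ is self-adjoint for $( \ . \ )$, so that $P_z^\perp \cap T$ is a $K_T^0$-vector space; since $[K_T^0:\Q]=r/2$, its $K_T^0$-dimension is $0$, $1$ or $2$, and the extremes are excluded ($0$ because $z$ is a jump point; $2$ would give $P_z^\perp \cap T = T$, forcing $\sigma' \in \C\ell$, incompatible with (\ref{equation.quadr})), whence $\rho_z = r/2$. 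You instead linearize via the period map $\psi(t)=(\sigma.t)$, use Lemma~\ref{lemma.CMeasy}(2) to identify $\psi(T)=x_1 K_T$, rewrite the orthogonality condition as $\overline{k}=\mu k$, and conclude with Corollary~\ref{corollary.imaginary}; your bookkeeping (injectivity of $\psi$, the fact that $a,b \neq 0$, and the observation that one nonzero solution $k_0$ pins down the whole solution set as $(\R\cdot k_0)\cap K_T$) is sound, and in fact this is essentially the ``alternative proof'' that sits, commented out, in the paper's source. The trade-offs are as follows: the paper's module-theoretic argument uses the CM hypothesis only through $[K_T:\Q]=r$ and, run without that hypothesis, still yields that $\rho_z$ is divisible by $[K_T^0:\Q]$ and strictly smaller than $r$ (as the paper remarks after the proposition), whereas your identification $\psi(T)=x_1 K_T$ genuinely requires $\dim_{K_T}T=1$ and so is specific to the CM case; in exchange, your argument is more explicit, exhibiting the Picard lattice concretely as $\psi^{-1}\big(x_1\cdot((\R\cdot k_0)\cap K_T)\big)$, i.e.\ as the preimage of a single real line of periods, which makes transparent why the existence of a single rational $(1,1)$-class already forces the full value $r/2$.
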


\begin{center}
	\vcenteredhbox{\begin{tabular}{cc}
			\toprule
			&
			\begin{tabular}{c}
				Admissible values \\
				of $\rho_z$, CM case \\
			\end{tabular}
			\\
			\toprule
			\begin{tabular}{c}
				Outside the \\
				equator \\
			\end{tabular}
			& $0, 1$ \\
			\midrule
			\begin{tabular}{c}
				On the \\
				equator \\
			\end{tabular}
			& $0, r/2$ \\
			\bottomrule
	\end{tabular}}
	\qquad \qquad
	\vcenteredhbox{\includegraphics[width=0.3\textwidth]{sphere}}
\end{center}

\begin{proof}
	Firstly, note that $P_{z}^\perp \cap (T\oplus\Q \ell)$ is all contained in $T$. This follows from Lemma \ref{lemma.equatorT} applied to any non-zero element $\ell' \in P_{z}^\perp \cap (T\oplus\Q \ell)$. Hence $P_{z}^\perp \cap (T\oplus\Q \ell)=P_{z}^\perp \cap T$. The key remark is that $P_{z}^\perp \cap T$ admits an action of the real part $K_T^0=K_T \cap \R$ of the endomorphism field $K_T$. To show this, choose $\beta \in K_T^0$ and $\ell' \in P_{z}^\perp \cap T$. Write $z=[\sigma'=a\sigma + b\overline{\sigma}+\ell]$. Then, using that $(\ell.\ell')=0$, we obtain ($\beta \in K_T^0$ is self-adjoint for $( \ . \ )$)
	\begin{align*}
	(\sigma'.\beta(\ell'))&=(\beta(a\sigma+b\overline{\sigma}+\ell).\ell')\\
	&=a (\beta \cdot \sigma . \ell') + b (\beta \cdot \overline{\sigma} . \ell') + 0 \\
	&=\beta \cdot \big( a (\sigma . \ell') + b (\overline{\sigma} . \ell') + 0 \big) \\
	&= \beta \cdot ( \sigma'.\ell') =0.
	\end{align*}
	As a consequence, $P_{z}^\perp \cap T$ is a $K_T^0$-vector space. As $[K_T^0 \colon \Q]=r/2$ (we are in the CM case by assumption, so that $[K_T\colon K_T^0]=2$ and $[K_T : \Q]= r$), $\dim_{K_T^0} P_{z}^\perp \cap T$ can only take three different values: $0, 1, 2$. We immediately exclude the case $\dim_{K_T^0} P_{z}^\perp \cap T=0$ since $z$ is a point of Picard jump. Assume by contradiction that $\dim_{K_T^0} P_{z}^\perp \cap T=2$. In fact, this equality would imply $P_{z}^\perp \cap T=T$. This would force $\sigma'$ to belong to $\C \ell$, or equivalently to have zero $T_\C$-part (indeed the form $( \ . \ )$ in non-degenerate on $T$, and therefore on $T_\C$). On the other hand, this is a contradiction, since any non-zero $\sigma'\in \C \ell$ does not satisfy (\ref{equation.quadr}). In conclusion, $\dim_{K_T^0} P_{z}^\perp \cap T=1$, or equivalently $\dim_\Q P_{z}^\perp \cap T= [K_T^0 \colon \Q]=r/2$, i.e.\ $\rho_z=r/2$.
\end{proof}

\begin{remark}\label{remark.noirreducible}
	If the original $T$ has CM, the only case where there are no points of excessive Picard jump (i.e.\ $\rho_z \le 1$ for all $z \in \PP^1_\ell$) is $r=2$.
\end{remark}

\begin{remark}
	The hypothesis on $T$ of being of CM type plays a fundamental role in order to ensure the validity of the property $\rho_z = r/2$ for all points of jump on the equator. It is not difficult to construct explicit examples of irreducible polarized Hodge structures of K3 type $T$ for which the excessive Picard values are multiple. In addition, not even the sole assumption that $K_T$ is CM, without supposing $\dim_\Q T =[K_T : \Q]$, is enough to guarantee the result.
\end{remark}

\begin{remark}
	However, if $T$ is not assumed to be of CM type, one may argue as in the proof of Proposition \ref{proposition.picardjumpeq} to deduce that $\rho_z$ (for $z$ point of Picard jump on the equator) is divisible by $[K_T^0 : \Q]$ and strictly smaller than $r$.
\end{remark}

\begin{center}
	\vcenteredhbox{\begin{tabular}{cc}
			\toprule
			&
			\begin{tabular}{c}
				Admissible values \\
				of $\rho_z$, non-CM case \\
			\end{tabular}
			\\
			\toprule
			\begin{tabular}{c}
				Outside the \\
				equator \\
			\end{tabular}
			& $0, 1$ \\
			\midrule
			\begin{tabular}{c}
				On the \\
				equator \\
			\end{tabular}
			&
			\begin{tabular}{c}
				$0$, $d$ such that \\
				$[K_T^0 : \Q] \mid d$ and $d < r$ \\
			\end{tabular}
			\\
			\bottomrule
	\end{tabular}}
	\qquad \qquad
	\vcenteredhbox{\includegraphics[width=0.3\textwidth]{sphere}}
\end{center}

The following result enriches Proposition \ref{proposition.jump}. To prove it, it is not necessary to assume that $T$ has CM, or that $K_T$ is CM. However, in Section \ref{section.actionequator} we will give a better description of the distribution of points of Picard jump on the equator in the CM case.

\begin{proposition}\label{proposition.jumpdenseoneq}
	Assume that $T$ is a polarized irreducible Hodge structure of K3 type, and consider its sphere of related Hodge structures. Then the set of points of Picard jump on the equator is dense in the equator (for the classical topology). In particular, this set is countable (not finite).
\end{proposition}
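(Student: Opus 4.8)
The plan is to describe the Noether--Lefschetz locus of the equator explicitly through the periods of $T$, and then reduce density on the circle to a density statement about arguments of periods in $\C$. By Lemma \ref{lemma.equatorT}, a point of $\PP^1_\ell$ orthogonal to a nonzero $\gamma \in T \oplus \Q\ell$ lies on the equator precisely when $\gamma \in T$; conversely, by Lemma \ref{lemma.twopoints} every nonzero $\gamma \in T$ is orthogonal to exactly two (antipodal) points of $\PP^1_\ell$, and these are then points of Picard jump lying on $S^1_\ell$. Thus the Noether--Lefschetz locus of the equator is exactly the set of points orthogonal to some nonzero $\gamma \in T$. I would then parametrize the equator as in Remark \ref{remark.N(a)=N(b)}: writing $z = [\sigma' = a\sigma + b\overline\sigma + \ell]$, membership in $S^1_\ell$ forces $|a| = |b|$, and combining this with (\ref{equation.quadr}) gives $b = -\overline a$ with $|a|^2 = d/(2(\sigma.\overline\sigma))$ fixed. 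Hence $a \mapsto [a\sigma - \overline a \overline\sigma + \ell]$ is a homeomorphism from the circle $\{|a| = R\}$ onto $S^1_\ell$ (as one reads off Lemma \ref{lemma.parametrization}), and for $\gamma \in T$ the orthogonality condition $(\sigma'.\gamma) = 0$ becomes, using $(\ell.\gamma) = 0$ and $(\overline\sigma.\gamma) = \overline{(\sigma.\gamma)}$, simply $a(\sigma.\gamma) \in \R$.

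Writing $a = Re^{i\theta}$, the point $z$ is therefore of Picard jump if and only if $\theta \equiv -\arg(\sigma.\gamma) \pmod\pi$ for some nonzero $\gamma \in T$. Under the homeomorphism above, density of the Noether--Lefschetz locus in $S^1_\ell$ is thus equivalent to density in $\R/\pi\Z$ of the set of arguments $\{\arg(\sigma.\gamma) : \gamma \in T \setminus \{0\}\}$. I would establish this via the stronger claim that the $\Q$-vector space of periods
\[
V = \Set{ (\sigma.\gamma) | \gamma \in T } \subseteq \C
\]
is dense in $\C$ for the classical topology: density of $V$ immediately gives density of its arguments modulo $\pi$, hence of the jump angles modulo $2\pi$.

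The crux, and the step I expect to be the main obstacle, is showing that $V$ is dense in $\C$, since this is where the positivity of the Hodge structure must enter. First I would rule out that $V$ lies in a single real line: if $V \subseteq \R w_0$, then after rescaling the generator $\sigma$ (which rescales all periods simultaneously) we may assume $(\sigma.\gamma) \in \R$ for every $\gamma \in T$; taking imaginary parts gives $(\Im\sigma . \gamma) = 0$ for all $\gamma$, so $\Im\sigma$ is orthogonal to $T$, hence to $T_\R$, and non-degeneracy of $(\ .\ )$ forces $\Im\sigma = 0$ --- contradicting $(\sigma.\overline\sigma) = (\Re\sigma)^2 + (\Im\sigma)^2 > 0$ together with $(\Re\sigma)^2 = (\Im\sigma)^2$. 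Consequently there exist $\gamma_0, \gamma_1 \in T$ with $w_0 = (\sigma.\gamma_0) \neq 0$ and $w_1 = (\sigma.\gamma_1)$ such that $w_0, w_1$ are $\R$-linearly independent in $\C \cong \R^2$. Since $V$ is a $\Q$-vector space containing $w_0$ and $w_1$, it contains $\Q w_0 + \Q w_1$, which is dense in $\C$: in the $\R$-basis $\{w_0, w_1\}$ this set is precisely $\Q^2$. Hence $V$ is dense, which finishes the argument. Finally, the asserted countability is automatic: the locus is contained in the countable set of points orthogonal to classes of $T \oplus \Q\ell$ (each class contributing two points, as noted after Proposition \ref{proposition.jump}), and it cannot be finite, being dense in the infinite equator.
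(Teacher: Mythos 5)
Your proposal is correct and takes essentially the same route as the paper: parametrize the equator by $b=-\overline{a}$, translate Picard jump at angle $\theta$ into $\theta \equiv -\arg(\sigma.\gamma) \pmod{\pi}$, and conclude by showing the $\Q$-vector space of periods $(\sigma.\gamma)$ is not contained in a real line of $\C$, hence dense. The only (cosmetic) difference is that you rule out collinearity directly via $\Im\sigma \perp T$ and non-degeneracy, and you spell out why non-collinearity gives density, whereas the paper phrases the contradiction through the period field $k_T$ being real and leaves the density step implicit.
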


\begin{proof}
	Thanks to Lemma \ref{lemma.parametrization}, a point on the equator $S^1_\ell \subseteq \PP^1_\ell$ corresponds via the isomorphism $\PP^1_\ell \simeq S^2_\ell$ to the normalization of the vector
	\[
	v(a,b)= \big( \Re (b-\overline{a}), \Im (b-\overline{a}), 0 \big).
	\]
	Lemma \ref{lemma.parametrization} also gives a condition on $z$ to belong to $S^1_\ell$, namely $\abs{a}=\abs{b}$. Write $R$ to denote these absolute values; then $a=Re^{i\theta}, b=Re^{i\tau}$ for some $\theta, \tau \in \R$. Moreover, (\ref{equation.quadr}) gives
	\[
	2 R^2 e^{i (\theta + \tau)} (\sigma.\overline{\sigma}) = -d,
	\]
	from which we deduce that $\theta+\tau\equiv_{2\pi}\pi$. Hence, $b=-Re^{-i\theta}=-\overline{a}$. Then
	\[
	v(a,b)= \big( -2 R \cos(-\theta), -2 R \sin(-\theta) , 0 \big).
	\]
	Therefore, in order to prove the statement, it is enough to show that the set of possible complex arguments assumed by the periods $(\sigma.\gamma)$, $\gamma \in T$ is dense in the circle $\{ z \in \C \ | \ \abs{z}=1 \}$ (the point of Picard jump corresponding to such a $\gamma$ would be determined by $a = R e^{i\theta}$ such that $\theta$ (or $\pi+\theta$) is the opposite of the argument of $(\sigma.\gamma)$). However, the $\Q$-vector space $P$ of the periods $(\sigma.\gamma)$, $\gamma \in T$ is not contained in an $\R$-line of $\C$; indeed, if this were the case, then the period field $k_T = \Q((\sigma.\gamma_i)/(\sigma.\gamma_1))$ (for a basis $\{\gamma_i\}$ of $T$) would be contained in $\R$, contradiction. As a consequence, $P$ is dense in $\C$, and then the set of the complex arguments of elements of $P\setminus \{0\}$ is dense in the circle.
\end{proof}

\section{Actions on Noether-Lefschetz loci}\label{section.actions}

In this section, assume that $T$ is a polarized irreducible Hodge structure of K3 type, and consider again the associated twistor base $\PP^1_\ell \simeq S^2_\ell$. We define two actions: first, an action of the multiplicative group $K_T^\times$ on the Noether-Lefschetz locus of the upper-half sphere, deprived of the north pole; then, an action of the multiplicative group $K_T^\times/(K_T^0)^\times$ on the Noether-Lefschetz locus of the equator. We treat in particular the situation arising when we assume that $T$ is of CM type: the jumping loci outside and on the equator are homogeneous under the actions of $K_T^\times$ and $K_T^\times/(K_T^0)^\times$, respectively.

\medskip

One may define these two actions together as a unique action on the whole $\PP^1_\ell$; however, we prefer to keep them separated to analyse the different behaviours on points outside and on the equator $S^1_\ell$.

\subsection{Outside the equator}\label{section.outsideequator}

Denote by $\U$ the upper-half sphere of $\PP^1_\ell \simeq S^2_\ell$ (defined by the condition $\abs{a}>\abs{b}$ on $z=[\sigma'=a\sigma+b\overline{\sigma}+\ell]$, see Lemma \ref{lemma.parametrization}), and by $\QQ$ the set of points of Picard jump in $\U$ (that is, the Noether-Lefschetz locus of $\U$), deprived of the north pole $x$. Notice that a point $z \in \QQ$ satisfies $\rho_z=1$, by Proposition \ref{proposition.jump}, and hence is orthogonal to a unique (up to a rational scalar) non-zero element $\ell' \in P_z^\perp \cap (T \oplus \Q \ell)$. Denote by $\QQ^+$ the set of $z \in \QQ$ for which the corresponding $\ell'$ is such that $(\ell'.\ell')>0$.

\begin{remark}\label{remark.QneqQ+}
	Unless $r=2$, the form $( \ . \ )$ is not positive definite on $T$. Thus, $\QQ^+ \subsetneq \QQ$ if $r>2$.
\end{remark}

For a point $z \in \PP^1_\ell$, we define its \emph{altitude} as the last coordinate of the vector defined by Lemma \ref{lemma.parametrization}, namely the third coordinate of the corresponding point in $S^2_\ell$ for the basis $\{\Re (\sigma), \Im (\sigma), \ell\}$ of $P_T \oplus \R \ell$.

\medskip

Recall that $d=(\ell.\ell) \in \Z_{>0}$. Choose an element $\ell' = \gamma_1+(m/d) \ell \in T \oplus \Q \ell$, $\ell' \notin T$, $\ell' \notin \Q \ell$, so that the two corresponding orthogonal points are neither the poles nor on the equator (see Lemma \ref{lemma.twopoints} and Lemma \ref{lemma.equatorT}). Denote by $z_1 \in \QQ$ the only point of Picard jump in the upper-half sphere orthogonal to $\ell'$. Taking a scalar multiple of $\ell'$ identifies the same $z_1$; we may assume $m=d$, so that $\ell'=\gamma_1+\ell$. For an element $A \in K_T^\times$, we define $z_2=A * z_1$, where $z_2 \in \QQ$ is the only point of Picard jump in the upper-half sphere orthogonal to $\ell''=\gamma_2+\ell$, where $\gamma_2=A(\gamma_1)$. This association defines an action of $K_T^\times$ on $\QQ$.

\begin{proposition}
	The action defined above is free. Moreover, if $T$ has CM, the action is transitive.
\end{proposition}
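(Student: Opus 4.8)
The plan is to analyze the action $A * z_1$ directly through the orthogonality condition that defines the points of $\QQ$.

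First I would set up the correspondence precisely. A point $z \in \QQ$ with $\rho_z = 1$ determines a unique line $\Q \ell' \subseteq T \oplus \Q \ell$ with $\ell' \in P_z^\perp$, and by the constraints $\ell' \notin T$ (off the equator, Lemma \ref{lemma.equatorT}) and $\ell' \notin \Q\ell$ (not a pole, Lemma \ref{lemma.twopoints}), we may normalize $\ell' = \gamma + \ell$ with $\gamma \in T$, $\gamma \neq 0$. Conversely, each such $\gamma$ determines via Lemma \ref{lemma.twopoints} a unique point of $\QQ$ (the upper-half-sphere one of the antipodal pair). So $\QQ$ is in bijection with the set of nonzero $\gamma \in T$, and the action $A * z_1 = z_2$ is, under this identification, simply $\gamma_1 \mapsto A(\gamma_1)$. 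I would verify this is a genuine group action: the identity acts trivially, and since $A(\gamma_1)$ defines $\ell'' = A(\gamma_1) + \ell$, composing $B$ after $A$ sends $\gamma_1 \mapsto B(A(\gamma_1)) = (BA)(\gamma_1)$, matching the multiplicative structure of $K_T^\times$.

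Freeness then reduces to a clean statement: if $A * z_1 = z_1$, then $A(\gamma_1)$ and $\gamma_1$ define the same point of $\QQ$, hence the same line $\Q\ell'$, so $A(\gamma_1) + \ell = \lambda(\gamma_1 + \ell)$ for some $\lambda \in \Q$. Comparing the $\ell$-components (using orthogonality of $T$ and $\Q\ell$) forces $\lambda = 1$, whence $A(\gamma_1) = \gamma_1$. Since $T$ is irreducible, $A \in K_T$ acts on the one-dimensional (over $K_T$, in the CM case — but in general too, as $A$ is a field element and $\gamma_1 \neq 0$) so $A(\gamma_1) = \gamma_1$ with $\gamma_1 \neq 0$ forces $A = 1$ in the division algebra $K_T$. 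This gives freeness without the CM hypothesis.

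For transitivity in the CM case, I would use that $\dim_{K_T} T = 1$, so $T$ is a one-dimensional $K_T$-vector space and $K_T^\times$ acts transitively on $T \setminus \{0\}$. Given any two points $z_1, z_2 \in \QQ$ with associated $\gamma_1, \gamma_2 \in T \setminus\{0\}$, there is a unique $A \in K_T^\times$ with $A(\gamma_1) = \gamma_2$, and by construction $A * z_1 = z_2$. The one subtlety I expect to be the main obstacle is confirming that the action genuinely lands in $\QQ$ (the upper-half-sphere component) and not its antipode $\overline{z}$: a priori $A(\gamma) + \ell$ determines an antipodal pair $\{z_2, \overline{z_2}\}$ via Lemma \ref{lemma.twopoints}, and I must check the construction consistently selects the upper point. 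This follows because the altitude of the point orthogonal to $\gamma + \ell$ depends only on the normalization $\ell' = \gamma + \ell$ having a positive $\ell$-coefficient, which is preserved under the map $\gamma \mapsto A(\gamma)$; the upper-half-sphere selection is built into always choosing $\ell'$ of the form $\gamma + \ell$ with coefficient $+1$ on $\ell$, so both $z_1$ and $z_2$ lie in $\U$ by the same sign convention.
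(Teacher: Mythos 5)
Your proof is correct and takes essentially the same route as the paper: freeness comes from the fact that a point of $\QQ$ is orthogonal to a unique $\Q$-line in $T \oplus \Q\ell$ (Proposition \ref{proposition.jump}) combined with $K_T$ being a division algebra, and transitivity comes from $\dim_{K_T} T = 1$. One small remark: the ``subtlety'' in your last paragraph is vacuous rather than an obstacle --- both antipodal points orthogonal to $\gamma + \ell$ stay orthogonal under any rescaling of $\gamma+\ell$, so the sign of the $\ell$-coefficient selects nothing; the upper-half-sphere point is chosen by fiat in the definition of the action (and exists uniquely by Lemmas \ref{lemma.twopoints} and \ref{lemma.equatorT}), so your argument is unaffected.
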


\begin{proof}
	If $A \in K_T^\times$ is different from the identity morphism, $\ell'$ and $\ell''$ are linearly independent over $\Q$, so that they cannot be orthogonal to the same point in $\QQ$ (thanks to Proposition \ref{proposition.jump}); thus, the action is free. The transitivity of the action in the CM case follows from $\dim_{K_T} T =1$.
\end{proof}

\begin{remark}
	If $T$ does not have CM, the action is no longer transitive (as $\dim_{K_T} T > 1$).
\end{remark}

\begin{proposition}\label{proposition.altitude}
	Under the same assumptions as above, suppose that $z_1=[\sigma'=a\sigma + b\overline{\sigma} +\ell] \in \QQ$ and additionally that $A \in K_T^\times$. Then
	\begin{enumerate}
		\item If $\, \abs{A}=1$, say $A=e^{i\theta}$, then $z_2= A * z_1$ is represented by the element
		\[
		\sigma''=aA(\sigma)+bA(\overline{\sigma})+\ell=aA\sigma+b\overline{A}\overline{\sigma}+\ell=ae^{i\theta}\sigma+be^{-i\theta}\overline{\sigma}+\ell.
		\]
		In particular, $A$ acts on $\QQ$ by a rotation of angle $-\theta$ along the $\ell$-axis, and $z_1$ and $z_2$ have the same altitude;
		\begin{center}
			\includegraphics[width=0.3\textwidth]{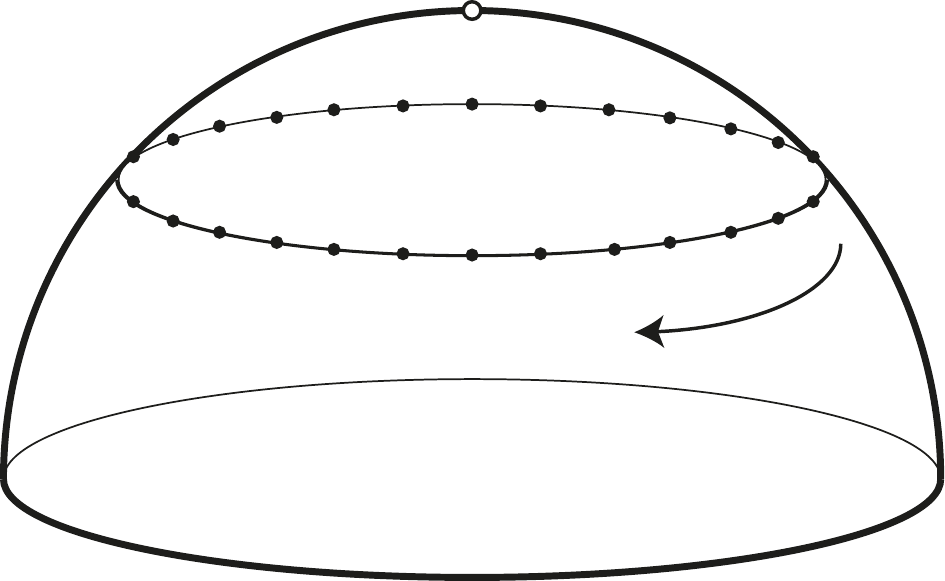}
		\end{center}
		\item If $\, \abs{A} \neq 1$, then $z_1$ and $z_2=A*z_1$ do not have the same altitude. In particular, if $\, \abs{A} < 1$ the altitude of $z_2$ is greater than the one of $z_1$, and vice versa;
		\begin{center}
			\includegraphics[width=0.3\textwidth]{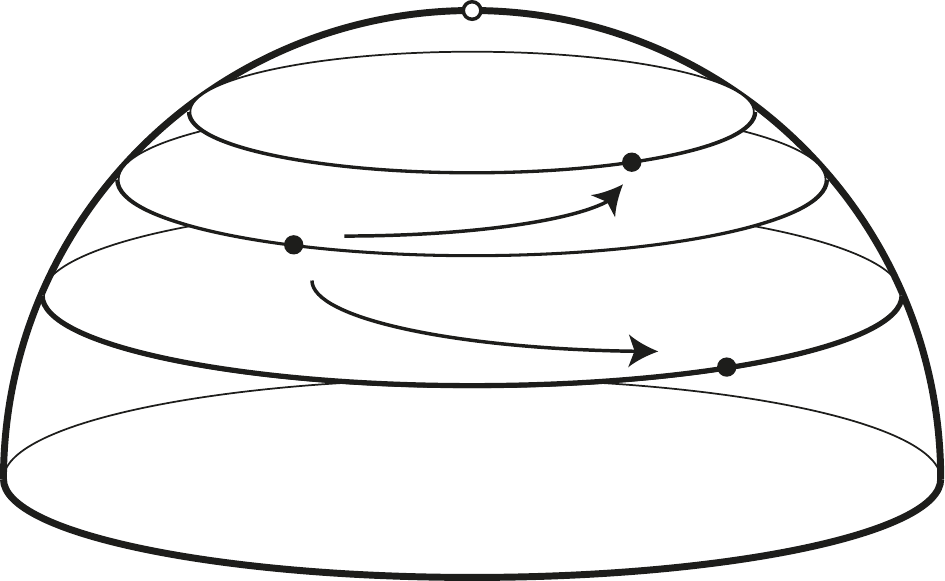}
		\end{center}
		\item If $A \in K_T^0 = K_T \cap \R$, then $z_1$ and $z_2=A*z_1$ lie on the same meridian of $\PP^1_\ell \simeq S^2_\ell$.
		\begin{center}
			\includegraphics[width=0.3\textwidth]{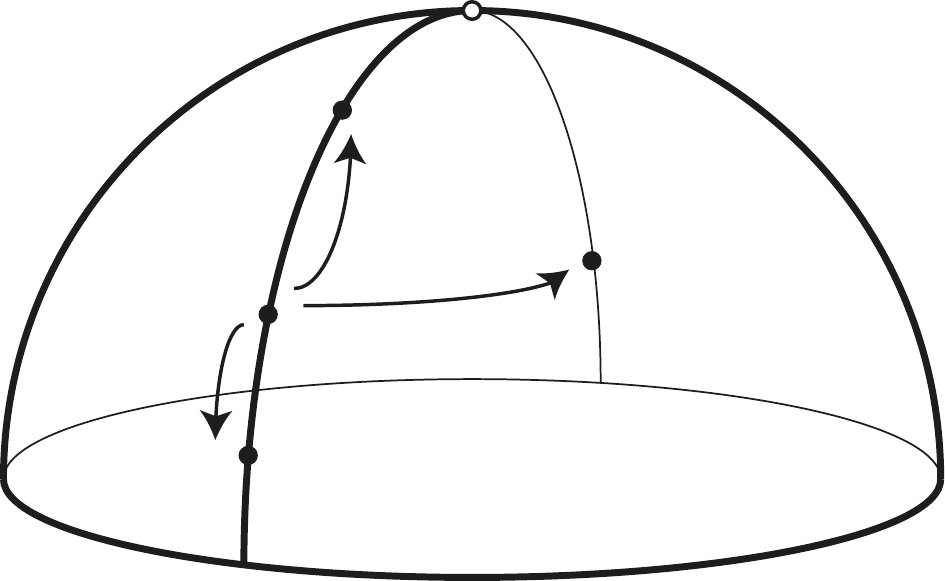}
		\end{center}
	\end{enumerate}
\end{proposition}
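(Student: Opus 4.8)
The plan is to handle part (1) by a direct substitution and to reduce parts (2) and (3) to closed formulas for the altitude and the longitude of a point of $\QQ$ in terms of the period of its orthogonal class. Throughout I use that an element $A\in K_T$ commutes with complex conjugation on $T_\C$, acts on $T^{2,0}$ and $T^{0,2}$ by the scalars $A$ and $\overline A$, and has transpose $\overline A$; in particular $A(\sigma)=A\sigma$, $A(\overline\sigma)=\overline A\,\overline\sigma$, and $(\sigma.A(\gamma_1))=\overline A\,(\sigma.\gamma_1)$.

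For part (1) I would plug $\sigma''=aA\sigma+b\overline A\,\overline\sigma+\ell$ into the two conditions defining $z_2$. Using $(\sigma.\ell)=(\overline\sigma.\ell)=(\ell.A(\gamma_1))=0$ and $(\sigma.\sigma)=(\overline\sigma.\overline\sigma)=0$, together with the two relations $2ab(\sigma.\overline\sigma)+d=0$ and $a(\sigma.\gamma_1)+b(\overline\sigma.\gamma_1)+d=0$ expressing $z_1\in\PP^1_\ell$ and $z_1\perp\ell'$, both $(\sigma''.\sigma'')$ and $(\sigma''.\ell'')$ collapse to $d\,(1-|A|^2)$. Hence for $|A|=1$ the class $\sigma''$ represents a point of $\PP^1_\ell$ orthogonal to $\ell''=A(\gamma_1)+\ell$; as $|aA|=|a|>|b|=|b\overline A|$ it lies in the upper half sphere, so it equals $z_2$ by uniqueness. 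Applying Lemma \ref{lemma.parametrization} to $(aA,b\overline A)=(ae^{i\theta},be^{-i\theta})$ then leaves the third coordinate $(|a|^2-|b|^2)(\sigma.\overline\sigma)/(2d)$ unchanged and multiplies the horizontal part $b-\overline a$ by $e^{-i\theta}$, which is exactly the rotation by $-\theta$ about the $\ell$-axis and preserves the altitude.

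For parts (2) and (3) the same $\sigma''$ no longer lies on the conic, so I would solve the system directly for an arbitrary orthogonal class. Write $z=[a\sigma+b\overline\sigma+\ell]\in\QQ$ orthogonal to $\gamma_0+\ell$ and set $\pi_0=(\sigma.\gamma_0)$, $k=d/(2(\sigma.\overline\sigma))$. The defining relations become $ab=-k$ and $a\pi_0+b\overline{\pi_0}+d=0$; eliminating $b=-k/a$, the upper–half–sphere root is $a=-(d+D)/(2\pi_0)$ with $D=\sqrt{d^2+4k|\pi_0|^2}$, and a short computation using $4k|\pi_0|^2=D^2-d^2$ yields the two quantities
\[
|a|^2=k\,\frac{D+d}{D-d},\qquad b-\overline a=\frac{D}{\overline{\pi_0}}.
\]
The decisive simplification is that $ab=-k$ forces $|b-\overline a|^2=|a|^2+|b|^2+2k=(|a|+|b|)^2$, so Lemma \ref{lemma.parametrization} collapses to the clean altitude formula
\[
\mathrm{alt}(z)=\frac{1}{\sqrt d}\cdot\frac{|a|^2-k}{|a|^2+k},
\]
a strictly increasing function of $|a|^2$, while the longitude of $z$ equals $\arg(b-\overline a)=\arg(\pi_0)$.

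With these formulas both claims are immediate once I specialise $\pi_0$. For $z_1$ one has $\pi_0=p:=(\sigma.\gamma_1)$, while for $z_2$ the orthogonal class is $A(\gamma_1)+\ell$, so $\pi_0=(\sigma.A(\gamma_1))=\overline A p$, giving $|\pi_0|=|A|\,|p|$ and $\arg(\pi_0)=\arg(p)-\arg(A)$. Since $|a|^2=k(D+d)/(D-d)$ is strictly decreasing in $D$, hence in $|\pi_0|$, and the altitude is strictly increasing in $|a|^2$, the altitude of $z_2$ is a strictly decreasing function of $|A|$; this proves part (2), with $\mathrm{alt}(z_2)>\mathrm{alt}(z_1)$ exactly when $|A|<1$. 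For part (3), when $A\in K_T^0=K_T\cap\R$ the scalar $A$ is real, so $\arg(\pi_0)=\arg(p)-\arg(A)\equiv\arg(p)\pmod\pi$ and $z_1,z_2$ share the horizontal direction up to sign, i.e.\ lie on the same meridian great circle. I expect the only real obstacle to be the bookkeeping behind the two displayed identities — notably the collapse $|b-\overline a|^2=(|a|+|b|)^2$ and the telescoping $4k|\pi_0|^2+(d+D)^2=2D(D+d)$ — after which everything reduces to the monotonicity of elementary functions.
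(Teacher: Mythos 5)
Your proposal is correct: I checked the key identities and they all hold. Part (1) is essentially the paper's own argument (the paper verifies $(\sigma''.\sigma'')=0$ and $(\sigma''.\ell'')=0$ using that norm-one elements of $K_T$ are isometries, while you verify the same two conditions via the transpose relation $(\sigma.A(\gamma_1))=\overline{A}(\sigma.\gamma_1)$; the computations are interchangeable, and your observation that both pairings collapse to $d(1-\abs{A}^2)$ is accurate). For parts (2) and (3), however, you take a genuinely different route. The paper never solves for $z_2$ explicitly: it considers the pencil $\lambda\sigma''+(1-\lambda)\overline{\sigma''}$ of classes orthogonal to $\ell''$ (with $\sigma''$ built from $B=\overline{A}^{-1}$), proves that the intersection with the conic forces $\lambda\in\R$, and then tracks how the quantities $\overline{a}+b$ (controlling altitude, via Remark \ref{remark.N}) and $b-\overline{a}$ (controlling the meridian) transform, picking up the factor $\abs{A}^{-1}$ respectively a real scalar. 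You instead solve the system $ab=-k$, $a\pi_0+b\overline{\pi_0}+d=0$ in closed form, obtaining $\abs{a}^2=k(D+d)/(D-d)$, $b-\overline{a}=D/\overline{\pi_0}$, the altitude formula $\frac{1}{\sqrt d}\frac{\abs{a}^2-k}{\abs{a}^2+k}$, and longitude $\arg(\pi_0)$; both parts then reduce to monotonicity in $\abs{\pi_0}=\abs{A}\abs{p}$ and to $\arg(\pi_0)=\arg(p)-\arg(A)$. I verified your formulas (including the upper-half-sphere root selection, the collapse $\abs{b-\overline{a}}=\abs{a}+\abs{b}$, and the telescoping $4k\abs{\pi_0}^2+(d+D)^2=2D(D+d)$), and the resulting monotonic dependence matches the paper's conclusion, including the direction ($\abs{A}<1$ raises the altitude). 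What your approach buys is strictly more information: an exact altitude and longitude of the jump point as a function of the period of its orthogonal class, which quantifies the altitude change, makes the meridian statement transparent, and incidentally sidesteps Remark \ref{remark.N} entirely (whose displayed constant is in fact slightly off --- the tangent involves $\sqrt{(\sigma.\overline{\sigma})/(2d)}$ rather than $(\sigma.\overline{\sigma})/(2d)$ --- though this is harmless in both arguments, since only monotonicity in $\abs{\overline{a}+b}$ is used). What the paper's pencil argument buys is the avoidance of any quadratic formula and of case analysis over roots. One last remark: your part (3) identifies the longitudes only modulo $\pi$ (the sign of the real scalar $A$ can flip the horizontal direction), so ``same meridian'' must be read as the full great circle through the poles; the paper's own proof has exactly the same ambiguity, since its real factor $(2\lambda-1)B$ may be negative, so this is a matter of interpretation of the statement rather than a gap in your argument.
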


\begin{remark}\label{remark.N}
	Recall that the homeomorphism $\PP^1_\ell \simeq S^2_\ell$ of Lemma \ref{lemma.parametrization} sends $z_1=[\sigma'=a \sigma+ b \overline{\sigma} + \ell]$ to the normalization of the vector
	\[
	v(a,b)= \left( \Re (b-\overline{a}), \Im (b-\overline{a}), \big(a\overline{a}-b\overline{b}\big)\frac{(\sigma.\overline{\sigma})}{2d}\right).
	\]
	This allows us to deduce that the tangent of the angle between a vector of a point $S^2_\ell$ and the plane of zero-altitude is given, up to sign, by
	\[
	\frac{(\sigma.\overline{\sigma})}{2d}\abs{\overline{a}+b}.
	\]
	Indeed, it is suffices to notice that
	\[
	a\overline{a}-b\overline{b}=(\overline{a}+b)(a-\overline{b}),
	\]
	since $ab \in \R$ by (\ref{equation.quadr}).
\end{remark}

\begin{proof}[Proof of the Proposition]
	Firstly, assume that $\abs{A}=1$. We are showing that $z_2=[\sigma'']$ actually belongs to the conic $\PP_\ell^1$, i.e.\ $(\sigma''.\sigma'')=0$. For, it is enough to observe that the (\ref{equation.quadr}), that here takes the form
	\[
	2(aA)\big(b\overline{A}\big)(\sigma.\overline{\sigma})+d=0,
	\]
	is satisfied since $\abs{A}=1$ and $z_1 \in \PP^1_\ell$. If $\abs{a}>\abs{b}$ then $\abs{aA}>\abs{b\overline{A}}$, so that $z_2$ still belongs to the upper-half sphere. To prove the first statement, it suffices to show that $(\sigma''.\ell'')=0$ holds true, where $\ell''=A(\gamma_1)+\ell$. Recall that, since $\abs{A}=1$, $A$ is an isometry for $( \ . \ )$; therefore we obtain
	\begin{align*}
	(\sigma''.\ell'')&=\big(aA(\sigma)+bA(\overline{\sigma})+\ell.A(\gamma_1)+\ell\big)\\
	&=a(A(\sigma).A(\gamma_1))+b(A(\overline{\sigma}).A(\gamma_1))+d\\ &=a(\sigma.\gamma_1)+b(\overline{\sigma}.\gamma_1)+d\\
	&=(\sigma'.\ell')=0.
	\end{align*}
	For the statement concerning the action by rotation and the same altitude of $z_1$ and $z_2$, it is enough to compare the tangents of the angles between the vectors corresponding to $z_1$ and $z_2$, as done in Remark \ref{remark.N}.
	
	\begin{remark}\label{remark.nosigma''}
		One could expect, for a general $A \in K_T^\times$ not necessarily of norm one, the point $z_2= A * z_1$ to be given by
		\[
		z_2=\left[\sigma''=a\overline{A}^{-1}(\sigma)+b\overline{A}^{-1}(\overline{\sigma})+\ell=a\overline{A}^{-1}\sigma+bA^{-1}\overline{\sigma}+\ell\right],
		\]
		so that $(\sigma''.\ell'')=0$ by a similar argument. Nonetheless, (\ref{equation.quadr}) is no longer satisfied and $z_2 \notin \PP^1_l$, so this guess is not true. It is indeed more complicated to deduce an explicit expression for $z_2$ when $A$ does not have norm one. 
	\end{remark}
	
	We continue the proof of Proposition \ref{proposition.altitude}. Assume now that $\abs{A} \neq 1$. Even without computing explicitly $z_2$ we can prove that $z_2$ has different altitude from $z_1$. Define $B=\overline{A}^{-1}$ and consider
	\[
	\sigma''=aB\sigma + b\overline{B}\overline{\sigma}+\ell.
	\]
	It corresponds to a point in $\PP(T^{2,0}\oplus T^{0,2}\oplus \C \ell)$ orthogonal to $\ell''$. Then $0=\overline{(\sigma''.\ell'')}=\big(\overline{\sigma''}.\ell''\big)$, where
	\[
	\overline{\sigma''}=\overline{b}B\sigma+\overline{a}\overline{B}\overline{\sigma}+\ell.
	\]
	Note that $\sigma'', \overline{\sigma''}$ are linearly independent over $\C$ since $a \neq \overline{b}$ (see Remark \ref{remark.anobconj}). Therefore, the line in $\PP(T^{2,0}\oplus T^{0,2}\oplus \C \ell)$ passing through these points is exactly the line of elements orthogonal to $\ell''$; among these points there are $z_2$ and $\overline{z_2}$, given by the intersection with the conic $\PP^1_\ell$. The general point of this line has the form $z=\big[\lambda \sigma'' + \mu \overline{\sigma''}\big]$. Since $\lambda+\mu=0$ does not give an element of $\PP^1_\ell$, we may assume $\lambda+\mu=1$, and $z=\big[\lambda \sigma'' + (1-\lambda) \overline{\sigma''}\big]$.
	
	\begin{center}
		\includegraphics[width=0.6\textwidth]{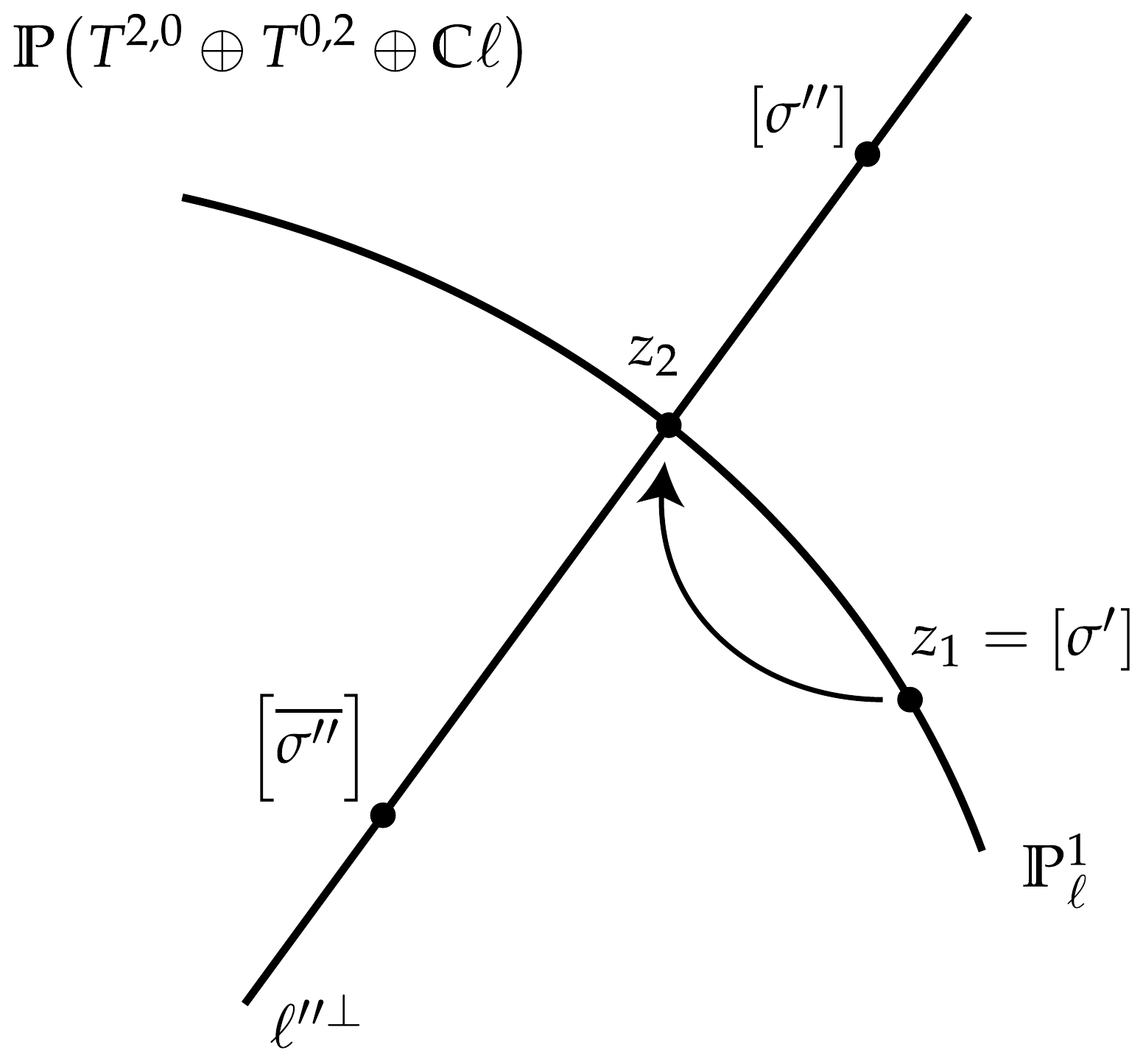}
	\end{center}
	
	Assume that $z \in \PP^1_\ell$. We claim that $\lambda \in \R$. Firstly, note that $\lambda \neq 1/2$, otherwise the $\sigma$-coefficient and the $\overline{\sigma}$-coefficient of $\lambda \sigma'' + (1-\lambda) \overline{\sigma''}$ would have the same norm, forcing $z$ to be on the equator $S^1_\ell$, contradiction. Since $z \in \PP^1_\ell$, then
	\[
	0=\big(\lambda \sigma'' + (1-\lambda) \overline{\sigma''}\big)^2=\lambda^2 (\sigma''.\sigma'')+2\lambda\big(1-\lambda)(\sigma''.\overline{\sigma''}\big)+(1-\lambda)^2\big(\overline{\sigma''}.\overline{\sigma''}\big).
	\]
	Note that $(\sigma''.\sigma'')=\big(\overline{\sigma''}.\overline{\sigma''}\big)$. Therefore, also the point $z'$ identified by
	\[
	z' = \left[ (1-\lambda)\sigma''+\lambda \overline{\sigma''} \right]
	\]
	is in the intersection of the conic $\PP^1_\ell$ with the line of elements orthogonal to $\ell''$. Since $\lambda \neq 1/2$, it cannot coincide with $z$, and thus $z'=\overline{z}$, where
	\[
	\overline{z}=\left[\overline{\lambda  \sigma''}+\big(1-\overline{\lambda}\big)\sigma''\right].
	\]
	Hence, $\lambda=\overline{\lambda}$.
	
	\medskip
	
	As noticed in Remark \ref{remark.N}, to discuss the difference of altitude we are interested in the quantity $\abs{\overline{a}+b}$ for the new coefficients. We have
	\[
	\lambda\sigma''+\big(1-\lambda\overline{\sigma''}\big)=\big(\lambda a +(1-\lambda)\overline{b}\big)B \sigma + \big(\lambda b + (1-\lambda) \overline{a}\big)\overline{B} \overline{\sigma} +\ell
	\]
	and, using $\lambda \in \R$,
	\[
	\abs{ \big(\lambda \overline{a} + (1-\lambda) b \big) \overline{B} + \big(\lambda b + (1-\lambda) \overline{a}\big)\overline{B} } = \abs{\overline{a}+b} \abs{\overline{B}} = \abs{\overline{a}+b} \abs{A}^{-1}.
	\]
	This relation tells us exactly that, for points in the upper-half sphere, acting by an element of $K_T^\times$ of norm smaller than one increases the altitude, while the action of elements of $K_T^\times$ of norm greater than one decreases the altitude.
	
	\medskip
	
	Finally, we prove the third statement. If $A \in \R$ then $B=\overline{A}^{-1}=A^{-1} \in \R$, too. Arguing as above, we deduce that $z_2$ is represented by the element
	\[
	\lambda \sigma'' + (1-\lambda) \overline{\sigma''} = \big(\lambda a + (1-\lambda)\overline{b}\big) B \sigma + \big(\lambda b + (1-\lambda) \overline{a}\big) B \overline{\sigma} + \ell
	\]
	for some $\lambda \in \R$. Set
	\[
	\tilde{a}=\big(\lambda a + (1-\lambda)\overline{b}\big) B, \quad \tilde{b}=\big(\lambda b + (1-\lambda) \overline{a}\big) B.
	\]
	For this element, the quantity $\tilde{b}-\overline{\tilde{a}}$ is equal to
	\[
	\left( (2\lambda-1) b + (1-2\lambda) \overline{a} \right) B = (b-\overline{a}) (2\lambda-1) B.
	\]
	As $(2\lambda-1) B \in \R$, the angle determined by $\Re\big(\tilde{b}-\overline{\tilde{a}}\big)$ and $\Im\big(\tilde{b}-\overline{\tilde{a}}\big)$ is the same as the angle determined by $\Re(b-\overline{a})$ and $\Im(b-\overline{a})$, so that $z_2$ and $z_1$ lie on the same meridian.
\end{proof}

\begin{remark}
	Of course, one can state a similar result for points in the lower-half sphere, as the lower-half may be obtained by conjugating the upper-half sphere.
\end{remark}

Huybrechts proved that, for a point $z_1 \in \QQ^+$ orthogonal to $\ell'$, the corresponding polarized irreducible Hodge structure of K3 type, namely $T'=\ell'^\perp$, is of CM type, and the real parts $K_T^0$ and $K_{T'}^0$ of $K_T$ and $K_{T'}$, respectively, coincide \cite[Proposition 3.8]{Huy}. Moreover, he gave an explicit description of $K_{T'}$ \cite[Corollary 3.10]{Huy}: it is the quadratic extension of $K_T^0=K_{T'}^0$ described by
\[
X^2 + \gamma X + \delta =0,
\]
where\footnote{In \cite[Corollary 3.10]{Huy} a factor $2$ in the expression of $\delta$ is missing.}
\[
\gamma = m(\alpha +\alpha^{-1}), \quad \delta =m^2-\frac{d}{2(\sigma . \overline{\sigma})}(\alpha^2+\alpha^{-2}-2),
\]
$K_T=\Q(\alpha)$, $d=(\ell.\ell)$, $m=(\ell.\ell')$ and $\sigma$ is such that $(\sigma.\ell')=1$.

\begin{proposition}\label{proposition.sameCMfield}
	Same hypotheses as in Proposition \ref{proposition.altitude}. Suppose that $T$ has CM. Assume, moreover, that $z_1 \in \QQ^+$ and that $A \in K_T^\times$ satisfies $\abs{A}=1$. Then $z_2=A*z_1 \in \QQ^+$ and the polarized irreducible Hodge structures of K3 type corresponding to the points $z_1$ and $z_2$, namely $\ell'^\perp$ and $\ell''^\perp$, have the same CM field.
\end{proposition}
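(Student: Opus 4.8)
The plan is to reduce the statement to Huybrechts' explicit description of the CM field of $\ell'^\perp$ recalled just above, and to verify that every quantity entering the defining quadratic polynomial is left unchanged when $\ell'=\gamma_1+\ell$ is replaced by $\ell''=A(\gamma_1)+\ell$. First I would settle the claim $z_2\in\QQ^+$: since $\abs{A}=1$, the endomorphism $A$ is an isometry for $( \ . \ )$, and as $A(\gamma_1)\in T$ is orthogonal to $\ell$ we obtain $(\ell''.\ell'')=(A(\gamma_1).A(\gamma_1))+d=(\gamma_1.\gamma_1)+d=(\ell'.\ell')>0$. Hence $z_2\in\QQ^+$, and by \cite[Proposition 3.8]{Huy} the Hodge structure $\ell''^\perp$ is again of CM type, with the same totally real part $K_T^0$ as $\ell'^\perp$.

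Next I would match the three scalars $m$, $d$ and $(\sigma.\overline{\sigma})$ entering $\gamma$ and $\delta$. The integer $d=(\ell.\ell)$ is intrinsic to the sphere. For $m$, since $A(\gamma_1)\in T$ is orthogonal to $\ell$, we get $m''=(\ell.\ell'')=(\ell.A(\gamma_1))+(\ell.\ell)=d=(\ell.\ell')=m'$. The genuinely computational point, which I expect to be the main step, is the normalization constant $(\sigma.\overline{\sigma})$: it is taken with respect to the generator of $T^{2,0}$ normalized against the relevant class, so I write $\sigma_1,\sigma_2$ for the generators with $(\sigma_1.\ell')=1$ and $(\sigma_2.\ell'')=1$. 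Setting $\sigma_2=\lambda\sigma_1$ and using $(\sigma_1.\gamma_1)=(\sigma_1.\ell')-(\sigma_1.\ell)=1$ together with the transpose rule $A'(\sigma_1)=\overline{A}\,\sigma_1$, I compute
\[
(\sigma_1.\ell'')=(\sigma_1.A(\gamma_1))=(A'(\sigma_1).\gamma_1)=\overline{A}(\sigma_1.\gamma_1)=\overline{A},
\]
so that $\lambda=\overline{A}^{-1}$. Therefore $(\sigma_2.\overline{\sigma_2})=\abs{\lambda}^2(\sigma_1.\overline{\sigma_1})=\abs{A}^{-2}(\sigma_1.\overline{\sigma_1})=(\sigma_1.\overline{\sigma_1})$, using $\abs{A}=1$.

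Finally I would conclude. The field $K_T=\Q(\alpha)$ is intrinsic to $T$, so the same generator $\alpha$ serves for both $\ell'$ and $\ell''$; since $m$, $d$ and $(\sigma.\overline{\sigma})$ coincide for the two classes, the coefficients $\gamma=m(\alpha+\alpha^{-1})$ and $\delta=m^2-\frac{d}{2(\sigma.\overline{\sigma})}(\alpha^2+\alpha^{-2}-2)$ are literally equal. Consequently $\ell'^\perp$ and $\ell''^\perp$ are both obtained from the fixed real field $K_T^0$ by adjoining inside $\C$ the roots of one and the same polynomial $X^2+\gamma X+\delta$, whence the two CM fields coincide as subfields of $\C$. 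The only delicate part is the normalization step: one must keep track of the $\ell''$-dependent scaling of $\sigma$, and it is precisely the transpose identity, combined with $\abs{A}=1$, that forces the factor $\abs{A}^{-2}$ to cancel and leaves $(\sigma.\overline{\sigma})$ invariant.
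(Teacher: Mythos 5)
Your proof is correct and follows essentially the same route as the paper: reduce to Huybrechts' quadratic $X^2+\gamma X+\delta$ over $K_T^0$, use the isometry property of $A$ (from $\abs{A}=1$) to get $z_2\in\QQ^+$, and check that $m$, $d$ and $(\sigma.\overline{\sigma})$ are unchanged. The only cosmetic difference is how the renormalized generator is found: you derive the scaling factor $\lambda=\overline{A}^{-1}$ via the transpose rule, whereas the paper directly verifies that $A(\sigma)$ works as the generator for $\ell''$ --- but since $\abs{A}=1$ gives $\overline{A}^{-1}=A$, these are the same choice.
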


\begin{proof}
	To prove that $z_2 \in \QQ^+$, it is enough to notice that
	\begin{align*}
	(\ell''.\ell'')&= (A(\gamma_1)+\ell.A(\gamma_1)+\ell)\\
	&= (A(\gamma_1).A(\gamma_1)) + (\ell.\ell)\\
	&= (\gamma_1.\gamma_1)+ (\ell.\ell)\\
	&=(\ell'.\ell')>0,
	\end{align*}
	as $A$ is an isometry for $( \ . \ )$ and $z_1 \in \QQ^+$.
	
	\medskip
	
	We now prove that the coefficients $\gamma, \delta$ are the same for the two points $z_1, z_2$, so that $K_{T'}=K_{T''}$. For both points $m=d$, since we fixed the $\ell$-part of $\ell', \ell''$; thus $\gamma$ is the same. Note that $\sigma$ depends on $\ell'$: it is chosen such that $(\sigma.\ell')=1$. On the other hand, if we choose such a $\sigma$ for $\ell'$, then $A(\sigma)=A\sigma$ satisfies
	\[
	(A(\sigma).\ell'')=(A(\sigma).A(\gamma_1)+\ell)=(A(\sigma).A(\gamma_1))=(\sigma.\gamma_1)=(\sigma.\ell')=1,
	\]
	since $A$ is an isometry. Hence, we may choose $A(\sigma)=A\sigma$ for $\ell''$. Then
	\[
	\big(A\sigma . \overline{A \sigma}\big)=\big(A\sigma . \overline{A} \overline{\sigma}\big) = A\overline{A}(\sigma.\overline{\sigma})=(\sigma.\overline{\sigma})
	\]
	so that $\delta$ is the same as well.
\end{proof}

\begin{remark}
	It is possible to construct examples to show that not all the degree-$2$ CM extensions of $K_T^0$ are realized as the CM endomorphism fields of some polarized irreducible Hodge structures of K3 type $T' = \ell'^\perp$. Moreover, the same degree-$2$ CM extension of $K_T^0$ may occur for infinitely many different $T'=\ell'$, even for an Euclidean dense subset of $z_1 \in \QQ^+$ (again, $\ell'$ is chosen orthogonal to $z_1 \in \QQ^+$).
\end{remark}

Endow $K_T^\times$ with the topology induced as a subspace of $\C^\times$ by the fixed embedding $K_T \hookrightarrow \C$. Notice that the proof of Proposition \ref{proposition.altitude} also puts in evidence the continuity of the action of $K_T^\times$ on $\QQ$. We summarize all the results in the following corollary.

\begin{corollary}\label{corollary.action1}
	Assume that $T$ has CM. Then:
	\begin{itemize}
		\item the topological group $K_T^\times$ acts freely, transitively and continuously on $\QQ$. Having fixed an element $z_1 \in \QQ$, this action induces a homeomorphism between $K_T^\times$ and $\QQ$, and this homeomorphism induces a homeomorphism between $\C^\times$ and $\U$, and between $\C$ and $\U \cup \{x\}$;
		\item the subgroup of $K_T^\times$ given by the elements of norm one acts on $\QQ$ by rotation along the $\ell$-axis;
		\item all points in $\QQ$ at the same altitude of $z_1$ are obtained from $z_1$ by acting with an element of $K_T^\times$ of norm one. Besides, given a point $z_1 \in \QQ$, there exist countably many points at the same altitude, and they are dense in the corresponding circle of $\U$ at that altitude;
		\item if $z_1 \in \QQ^+$, all the other points of Picard jump at the same altitude are in $\QQ^+$, and the Hodge structures $\ell'^\perp$ and $\ell''^\perp$ corresponding to these points have the same CM;
		\item finally, $\QQ^+$ is dense in $\U$.
	\end{itemize}
\end{corollary}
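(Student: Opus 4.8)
The first four assertions repackage results already established in this section, and I would dispatch them in order. Freeness and transitivity of the $K_T^\times$-action on $\QQ$ are the content of the Proposition preceding Proposition \ref{proposition.altitude}, and continuity is read off the explicit formulae in the proof of Proposition \ref{proposition.altitude}; fixing a base point $z_1$, a continuous free transitive action of a topological group gives a continuous bijection $A \mapsto A * z_1$ with continuous inverse, hence a homeomorphism $K_T^\times \simeq \QQ$. The parametrisation of Lemma \ref{lemma.parametrization} then identifies $\U$ with $\C^\times$—modulus and argument corresponding to altitude and to the rotation angle around the $\ell$-axis, by Proposition \ref{proposition.altitude}(1)--(2)—in such a way that $\QQ$ is carried onto $K_T^\times$ and the action onto multiplication; adjoining the north pole $x$ as the origin extends this to $\C \simeq \U \cup \{x\}$. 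Assertion (2) is exactly Proposition \ref{proposition.altitude}(1); assertion (3) combines it with the density of $K_T \cap S^1$ in $S^1$ from Corollary \ref{corollary.densityCM}; and assertion (4) is Proposition \ref{proposition.sameCMfield}.

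The one genuinely new assertion is the last, that $\QQ^+$ is dense in $\U$, and the plan is to argue geometrically, bypassing the group action altogether. Recall that a point $z \in \U$ is recorded by its unit positive vector $\alpha_z \in S^2_\ell \subseteq P_T \oplus \R \ell$ of positive altitude, and that if $z$ is orthogonal to $\ell' \in T \oplus \Q \ell$ then $\ell'$ lies in the full orthogonal $\R\alpha_z \oplus (P_T^\perp \cap T_\R)$. Projecting onto the positive $3$-space $P_T \oplus \R \ell$ kills the negative part, so $\mathrm{pr}(\ell') \in \R \alpha_z$ and
\[
\alpha_z = \frac{\mathrm{pr}(\ell')}{\lVert \mathrm{pr}(\ell') \rVert}
\]
(the sign being fixed by positivity of the altitude). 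Thus $\alpha_z$ is simply the normalised projection of the orthogonal class $\ell'$.

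Given a target $z_0 \in \U$ with associated $\alpha_0 \in P_T \oplus \R \ell$, $(\alpha_0.\alpha_0)=1$, I would approximate $\alpha_0$ by a rational vector $\ell' \in T \oplus \Q \ell$: such vectors are dense in $T_\R \oplus \R \ell$, so $\ell'$ may be taken arbitrarily close to $\alpha_0$. Since $\alpha_0$ has strictly positive altitude and is not proportional to $\ell$, a sufficiently close $\ell'$ has nonzero $\ell$-component and nonzero $P_T$-component, hence lies in neither $\Q \ell$ nor $T$; by Lemmas \ref{lemma.twopoints} and \ref{lemma.equatorT} its two orthogonal points are then antipodal, off the poles and off the equator. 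Continuity of $( \ . \ )$ gives $(\ell'.\ell') \to (\alpha_0.\alpha_0)=1>0$, so $(\ell'.\ell')>0$ once $\ell'$ is close enough, whence the upper-hemisphere orthogonal point $z'$ lies in $\QQ^+$. Continuity of $\mathrm{pr}$ gives $\mathrm{pr}(\ell') \to \alpha_0$ and therefore $\alpha_{z'} \to \alpha_0$, i.e.\ $z' \to z_0$ under $\PP^1_\ell \simeq S^2_\ell$. As $z_0$ was arbitrary, $\QQ^+$ is dense in $\U$.

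I expect the main subtlety to be precisely the temptation to deduce density of $\QQ^+$ from the transitive $K_T^\times$-action: that action does \emph{not} preserve $\QQ^+$ (only its norm-one subgroup does, by Proposition \ref{proposition.sameCMfield}), since under an element $A$ one has $(\ell''.\ell'')=(A'A\,\gamma_1.\gamma_1)+d$, a trace-form expression in $A'A \in K_T^0$ whose sign cannot be controlled by $\abs{A}$ alone—the distinguished embedding seeing only one conjugate of $A'A$. One therefore cannot transport a single $\QQ^+$-point to a dense set of altitudes by the action, and the direct approximation above sidesteps this obstacle; I note in passing that, unlike the earlier assertions, this last one uses neither the CM hypothesis nor the action, resting only on the density of rational classes together with the continuity of the form and of the projection.
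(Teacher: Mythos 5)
Your handling of the first four bullets coincides with the paper's: there, too, they are treated as already established by the preceding propositions, and the proof proper only addresses the two density claims. For the density of $\QQ^+$, you take a genuinely different route. The paper stays inside the action framework: it picks $\gamma \in T$ with $(\gamma.\gamma)>0$, sets $\ell'=\gamma+\ell$, and acts on the corresponding point $z_1$ only by elements $A=\lambda\alpha$ with $\lambda \in \Q^\times$ and $\abs{\alpha}=1$; for these, $(A(\gamma).A(\gamma))=\lambda^2(\gamma.\gamma)>0$, so the whole orbit stays in $\QQ^+$, and density follows since $\Q^\times\cdot(S^1\cap K_T)$ is dense in $\C^\times$ and the orbit map extends to the homeomorphism $\C^\times \simeq \U$ of the first bullet. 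Your direct approximation argument is correct as written: the decomposition $P_z^\perp=\R\alpha_z\oplus\big(P_T^\perp\cap T_\R\big)$ holds by a dimension count, so $\alpha_z$ is indeed the normalized projection of the orthogonal class; rational classes are dense in $T_\R\oplus\R\ell$; and closeness of $\ell'$ to $\alpha_0$ forces $(\ell'.\ell')>0$, $\ell'\notin T$ and $\ell'\notin\Q\ell$, so Lemmas \ref{lemma.twopoints} and \ref{lemma.equatorT} apply and $\alpha_{z'}\to\alpha_0$. What your route buys is real: it uses neither the CM hypothesis nor the action, so it proves the stronger statement that $\QQ^+$ is dense in $\U$ for \emph{every} irreducible polarized Hodge structure of K3 type, whereas the paper's argument is tied to CM (transitivity, density of $S^1\cap K_T$). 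What the paper's route buys is uniformity: everything in the section is expressed through the $K_T^\times$-orbit structure of a single point.

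One caveat on your closing diagnosis. You are right that the full $K_T^\times$-action does not preserve $\QQ^+$ and that the sign of $(A'A\,\gamma_1.\gamma_1)$ cannot be read off $\abs{A}$ alone; but the conclusion that one ``cannot transport a single $\QQ^+$-point to a dense set of altitudes by the action'' overshoots, and is exactly what the paper circumvents. The trick is twofold: choose the base class with $(\gamma.\gamma)>0$ (a condition strictly stronger than $z_1\in\QQ^+$, which only asks $(\gamma.\gamma)+d>0$), and act only by $A=\lambda\alpha$ with $\lambda\in\Q^\times$ rational and $\abs{\alpha}=1$, for which $A'A=\lambda^2$ is a rational scalar rather than a general totally positive element of $K_T^0$. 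This dense subset of $\C^\times$ does preserve positivity along that orbit, so the action-based proof goes through; your alternative is not forced, though it is more elementary and more general.
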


\begin{proof}
	The only statements left to prove are the ones concerning the density of points of $\QQ$ at the same altitude and the density of points of $\QQ^+$. The first assertion follows from the fact that, for a CM field $E$, the set of points in $E \cap S^1$ is dense in $S^1$, according to Corollary \ref{corollary.densityCM}. For the second assertion: let $\gamma \in T$ such that $(\gamma.\gamma)>0$, so that $(\gamma+\ell.\gamma+\ell)=(\gamma.\gamma)+d>0$ as well. Set $\ell'=\gamma+\ell$. Consider an element in $\Q^\times \cdot (S^1 \cap K_T) \subseteq K_T^\times$, say $A=\lambda \alpha$, for $\lambda \in \Q^\times$ and $\alpha \in K_T^\times$ satisfying $\abs{\alpha}=1$. Then $\ell''=A(\gamma)+\ell$ satisfies
	\[
	(\ell''.\ell'')=(A(\gamma).A(\gamma))+d = \lambda^2 (\gamma.\gamma) + d >0.
	\]
	The density of $\QQ^+$ in $\U$ follows, therefore, from the density of $\Q^\times \cdot (S^1 \cap K_T)$ in $\C^\times$ ($S^1 \cap K_T$ is dense in $S^1$ thanks to Proposition \ref{proposition.density}).
\end{proof}

\subsection{On the equator}\label{section.actionequator}

Now we focus on the equator $S^1_\ell$. In the case $\PP^1_\ell \setminus S^1_\ell$ we had at our disposal a clean way to choose one of the two points orthogonal to a certain $\ell' \in T \oplus \Q \ell$: picking the one in the upper-half sphere (subject to the condition $\abs{a}>\abs{b}$). Here this choice is not so clear; therefore, we will consider pairs of antipodal points on $S^1_\ell$. Define $\RR$ to be the set of points of Picard jump on the equator modulo the relation $\{ \pm \}$. Pick an element $\ell'=\gamma_1 \in T$, $\gamma_1 \neq 0$, and consider $z_1=[\sigma'=a \sigma + b \overline{\sigma} + \ell]$ on $S^1_\ell$ which is orthogonal to $\gamma_1$. By our choice of $\RR$, here we mean at the same time $z_1$ and $\overline{z_1}$. For $A \in K_T^\times$, we define $z_2 = A * z_1$ to be the (pair of) point orthogonal to $\ell''=\gamma_2=A(\gamma_1)$. This defines an action of $K_T^\times$ on $\RR$. This action is no longer free: indeed, as explained in the proof of Proposition \ref{proposition.picardjumpeq}, two non-zero elements $\gamma, \delta$ in $T$ are orthogonal to the same point if there exists a $\beta \in (K_T^0)^\times= K_T^\times \cap \R$ such that $\beta(\gamma)=\delta$. Then, $(K_T^0)^\times$ is contained in the kernel of this action, or equivalently in the stabilizer of each point. This induces an action of the quotient group $K_T^\times / (K_T^0)^\times$ on $\RR$. If $T$ has CM, the action of the quotient is free, i.e.\ $(K_T^0)^\times$ is exactly the stabilizer of each point: this follows from the fact that $\rho_z = r/2$ for points of Picard jump on the equator (Proposition \ref{proposition.picardjumpeq}). However, it will be clear later that the action of $K_T^\times/(K_T^0)^\times$ is free even if $T$ is not of CM type. Notice that the quotient group is trivial if $T$ is of totally real type. Again, if $T$ has CM then the action is transitive (as $\dim_{K_T} T = 1$).

\medskip

We have an explicit way to compute $z_2 = A * z_1$. Pick $\sigma$ satisfying $(\sigma.\ell')=1$. If $z_1=[\sigma'=a \sigma + b \overline{\sigma} +\ell]$, (\ref{equation.lin}) gives $a+b=0$, i.e.\ $b=-a$. On the other hand, we rewrite (\ref{equation.quadr}) as $-2a^2(\sigma.\overline{\sigma})+d=0$, which forces $a \in \R$. Hence,
\[
\sigma'=a\sigma - a \overline{\sigma}+\ell.
\]
Assume that $A = R e^{i\theta}$; then $z_2=[\sigma'']$, where
\[
\sigma''=ae^{i\theta}\sigma -ae^{-i\theta} \overline{\sigma} +\ell.
\]
Indeed $z_2 \in \PP^1_\ell$ holds since (\ref{equation.quadr}) is satisfied, and
\begin{align*}
(\sigma''.A(\gamma_1))&=(ae^{i\theta}\sigma.A(\gamma_1))+(-ae^{-i\theta}\overline{\sigma}.A(\gamma_1))\\
&=ae^{i\theta}\big(\overline{A}(\sigma).\gamma_1\big)-ae^{-i\theta}\big(\overline{A}(\overline{\sigma}).\gamma_1\big)\\
&=ae^{i\theta}\overline{A} -ae^{-i\theta} A=0.
\end{align*}
Looking at the isomorphism $\PP^1_\ell \simeq S^2_\ell$ defined in Lemma \ref{lemma.parametrization}, we see that $A=R e^{i\theta}$ acts on $\RR$ by a rotation of angle $-\theta$. Note that if $A \in (K_T^0)^\times$ then $\theta=0$ or $\theta= \pi$, and $z_2=z_1$ (in $\RR$), as already mentioned.

\begin{center}
	\includegraphics[width=0.3\textwidth]{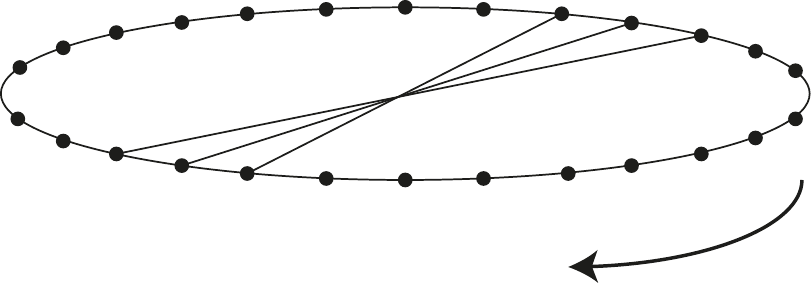}
\end{center}

We add a topological flavour to this discussion. Endow $K_T^\times$ with the topology induced by the topology of $\C^\times$ under the fixed embedding $K_T \hookrightarrow \C$, and $(K_T^0)^\times$ with the subspace topology. Then the topological quotient group $K_T^\times / (K_T^0)^\times$ is a subspace of the topological quotient group $\PP^1(\R)=\C^\times/\R^\times$. Moreover, $\RR$ is a subspace of $S^1_\ell / \{\pm\}$ and the action of $K_T^\times / (K_T^0)^\times$ on it is given by rotation.

\begin{remark}
	If $K_T$ is a CM field, then $K_T^\times / (K_T^0)^\times$ is dense in $\PP^1(\R)$. To deduce this, it is enough to show that the elements of norm one inside $K_T$ are dense in the circle: this is the content of Corollary \ref{corollary.densityCM}.
\end{remark}

\begin{remark}\label{remark.R+}
	We may define the equivalent of $\QQ^+$ also in this setting: let $\RR^+$ be the set of (pair of) points $z \in S^1_\ell \subseteq \PP^1_\ell$ that are orthogonal to an element of $T$ of positive self-intersection. If $\gamma_1 \in T$ is of positive self intersection, denote by $z_1$ (one of) its orthogonal. For an element $A \in K_T^\times$, $\abs{A}=1$ implies that $\gamma_2=A(\gamma_1)$ is of positive self-intersection as well. Then, by acting with elements of $S^1 \cap K_T^\times$ on $z_1$, we see that $\RR^+$ is dense in $S^1_\ell/\{\pm\}$ if $K_T$ is a CM field.
\end{remark}

In retrospect, we have proven the following result.

\begin{proposition}\label{proposition.action2}
	Assume that $T$ has CM. Then:
	\begin{itemize}
		\item the topological group $K_T^\times / (K_T^0)^\times$ acts freely, transitively and continuously by rotations on the topological space $\RR$;
		\item having fixed an element $z_1 \in \RR$, this action induces a homeomorphism between $K_T^\times / (K_T^0)^\times$ and $\RR$, and this homeomorphism passes to the topological completions, inducing a homeomorphism between $\PP^1(\R)$ and $S^1_\ell/\{\pm\}$;
		\item $\RR$ is dense in $S^1_\ell/\{\pm\}$, or equivalently: the set of points of Picard jump of the equator is (countable and) dense in the equator;
		\item finally, $\RR^+$ is countable and dense in $S^1_\ell/\{\pm\}$.
	\end{itemize}
\end{proposition}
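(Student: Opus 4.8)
The plan is to assemble the facts established throughout Section~\ref{section.actionequator}, since each assertion has essentially been verified in the preceding discussion; the proof is a matter of organizing them. For the first bullet, recall that the action of $K_T^\times$ on $\RR$ was defined by sending $z_1$ (the pair orthogonal to $\gamma_1$) to the pair orthogonal to $A(\gamma_1)$, and that $(K_T^0)^\times$ lies in the stabilizer of every point, so the action descends to $K_T^\times/(K_T^0)^\times$. Transitivity is immediate from $\dim_{K_T} T = 1$, as any two nonzero $\gamma_1, \gamma_2 \in T$ differ by an element of $K_T^\times$. Freeness of the quotient action is precisely the statement that $(K_T^0)^\times$ is the \emph{full} stabilizer: if $A(\gamma_1)$ is orthogonal to the same point as $\gamma_1$, then both lie in $P_{z_1}^\perp \cap T$, which by Proposition~\ref{proposition.picardjumpeq} is a one-dimensional $K_T^0$-space, forcing $A \in (K_T^0)^\times$. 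Continuity follows from the explicit formula $\sigma'' = a e^{i\theta}\sigma - a e^{-i\theta}\overline{\sigma} + \ell$ computed above, which shows that $A = R e^{i\theta}$ acts by the rotation of angle $-\theta$ and thus depends continuously on $A$.

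For the second bullet, I would fix $z_1 \in \RR$ and consider the orbit map $[A] \mapsto A * z_1$. By freeness and transitivity it is a bijection $K_T^\times/(K_T^0)^\times \to \RR$, and by the rotation description it is continuous with continuous inverse, hence a homeomorphism. The passage to the completions is the only step requiring care. Here I would use that $\PP^1(\R) = \C^\times/\R^\times$ and $S^1_\ell/\{\pm\}$ are both circles, that $K_T^\times/(K_T^0)^\times \subseteq \PP^1(\R)$ and $\RR \subseteq S^1_\ell/\{\pm\}$ are dense subspaces (density of the source is Corollary~\ref{corollary.densityCM}; density of $\RR$ is the third bullet, proven below), and that the rotation-by-$-\theta$ formula already defines a homeomorphism $\PP^1(\R) \to S^1_\ell/\{\pm\}$ of the ambient circles restricting to the orbit map. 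The orbit homeomorphism is then the restriction of a homeomorphism of the completions, which is what is claimed.

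The third and fourth bullets are density statements. Density of $\RR$ in $S^1_\ell/\{\pm\}$ follows at once, under the orbit homeomorphism, from the density of $K_T^\times/(K_T^0)^\times$ in $\PP^1(\R)$, which in the CM case is guaranteed by Corollary~\ref{corollary.densityCM} (the norm-one elements of the CM field $K_T$ are dense in $S^1$). Countability comes from Proposition~\ref{proposition.jump}, since $\RR$ embeds in the countable quotient of the nonzero elements of $T$. Finally, density of $\RR^+$ is the content of Remark~\ref{remark.R+}: starting from a single $\gamma_1 \in T$ with $(\gamma_1.\gamma_1) > 0$ and acting by the norm-one elements of $K_T^\times$, which preserve self-intersection and hence keep us inside $\RR^+$, the density of $S^1 \cap K_T$ in $S^1$ transports to density of the resulting orbit in $S^1_\ell/\{\pm\}$. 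I expect the extension to the completions to be the only genuinely delicate point, everything else being a bookkeeping of results already in hand.
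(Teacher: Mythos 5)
Your proposal is correct and follows essentially the same route as the paper: Proposition \ref{proposition.action2} is stated there with the preamble ``In retrospect, we have proven the following result,'' and its proof is exactly the assembly you perform---the action and stabilizer discussion with freeness via Proposition \ref{proposition.picardjumpeq}, transitivity from $\dim_{K_T} T=1$, the explicit rotation formula for continuity and the identification inside $\PP^1(\R)$ and $S^1_\ell/\{\pm\}$, density via Corollary \ref{corollary.densityCM}, and Remark \ref{remark.R+} for $\RR^+$. Your write-up even makes explicit two points the paper leaves implicit (the full-stabilizer computation and the extension of the orbit map to a homeomorphism of the ambient circles), which is a welcome sharpening rather than a deviation.
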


\section{Geometric interpretation}\label{section.geometry}

Suppose that $X$ is a complex projective K3 surface, and let $\ell=c_1(L)$ be the first Chern class of an ample line bundle. We call period field and endomorphism field of $X$ the period field and the endomorphism field, respectively, of the transcendental lattice $T(X) \subseteq H^2(X,\Q)$, which is a polarized irreducible Hodge structure of K3 type. We say that a complex projective K3 surface $X$ has CM if $T(X)$ has CM, that is: its dimension as a $K_{T(X)}$-vector space is $1$.

\medskip

We can construct the twistor space $\XX \to \PP^1_\C$ associated with $(X,L)$, where we put in evidence the considered ample line bundle. The fibre over a point $\zeta \in \PP^1_\C$ is a K3 surface of complex structure $x_1 I + x_2 J + x_3 K$, if $\zeta$ and $(x_1,x_2,x_3)$ correspond one to the other via stereographic projection. One should be careful, for the presence of two spheres: $S^2_\ell\simeq \PP^1_\ell$, an element of which corresponds to a class $z =[\sigma'] \in \PP^1_\ell$; $S^2\simeq \PP^1_\C$, parametrizing the complex structures $x_1 I + x_2 J + x_3 K$ of the fibres of the geometric twistor space. We want to underline the relation existing between these spheres. We may give explicitly an isomorphism $\PP^1_\ell \simeq \PP^1_\C$ that respects the following property: the Hodge structure determined by $z=[\sigma'] \in \PP^1_\ell$ on $T \oplus \Q \ell$ corresponds to the Hodge structure determined by $\sigma_\zeta$ on the same vector space, for $\zeta \in \PP^1_\C$ corresponding to $z \in \PP^1_\ell$. Equivalently, one may ask $\sigma'$ and $\sigma_\zeta$ to differ only by a a complex scalar. The isomorphism $\PP^1_\C \simeq \PP^1_\ell$ is defined by
\[
\zeta \mapsto \big[\sigma_\zeta = \sigma - \zeta^2 \overline{\sigma} + 2 \zeta \ell\big] = \left[ \frac{1}{2\zeta} \sigma - \frac{\zeta}{2}\overline{\sigma} + \ell \right]
\]
(sending $\infty$ to $[\overline{\sigma}]$). This follows from the explicit form of $\sigma_\zeta$ given in \cite[Section 3.F]{HKLR}. It is remarkable that $\sigma_\zeta$ is a linear combination of $\sigma, \overline{\sigma}$ and $\ell$ only. Composing these isomorphisms, we get
\[
S^2_\ell \simeq \PP^1_\ell \simeq \PP^1_\C \simeq S^2.
\]
We claim that this composition is nothing but the permutation $(x,y,z)\mapsto (x_1,x_2,x_3)=(z,x,y)$ on the sphere. For, a point $\zeta \in \PP^1_\C$ is sent, towards the left, to $\left[ \frac{1}{2\zeta} \sigma - \frac{\zeta}{2}\overline{\sigma} + \ell \right] \in \PP^1_\ell$. Set $a=1/(2\zeta)$, $b=-\zeta/2$. Then
\[
b-\overline{a} = - \frac{\zeta}{2} - \frac{1}{2\overline{\zeta}} =-\frac{1}{2\overline{\zeta}} (\zeta \overline{\zeta}+1) =\frac{\zeta\overline{\zeta}+1}{4\zeta\overline{\zeta}} \cdot (-2\zeta)
\]
and
\[
a\overline{a} - b \overline{b} = \frac{1}{4\zeta \overline{\zeta}} - \frac{\zeta \overline{\zeta}}{4} = \frac{\zeta \overline{\zeta} +1}{4\zeta \overline{\zeta}} \cdot (1-\zeta \overline{\zeta}).
\]
Therefore the vector
\[
v(a,b)= \left( \Re (b-\overline{a}), \Im (b-\overline{a}), \big(a\overline{a}-b\overline{b}\big)\frac{(\sigma.\overline{\sigma})}{2d}\right),
\]
that determines the point in $S^2_\ell$ (Lemma \ref{lemma.parametrization}), is positively aligned to the vector
\[
\left(-2 \Re(\zeta), -2 \Im(\zeta), 1- \zeta \overline{\zeta}\right),
\]
where we use that $(\sigma.\overline{\sigma})=2(\ell.\ell)=2d$, as already noticed. However, the image of $\zeta$ in $S^2$ via the stereographic projection is positively aligned with
\[
\left(1- \zeta \overline{\zeta}, -2 \Re(\zeta), -2 \Im(\zeta)\right),
\]
and this proves the claim.

\medskip

Therefore, we may talk about equator and points at the same altitude for $S^2_\ell$ and $S^2$ interchangeably (even if the altitude of $S^2_\ell$ corresponds, in fact, to the first coordinate of $S^2$). Points at the same altitude correspond, therefore, to complex structures $x_1 I + x_2 J + x_3 K$ on $X$ with the same coefficient $x_1$. The equator correspond to complex structures $x_2 J + x_3 K$, for which the $I$-part is missing.

\begin{remark}\label{remark.differentpicardnumbers}
	For a point $\zeta \in \PP^1_\C$, let $z \in \PP^1_\ell$ its correspondent under the isomorphism $\PP^1_\ell \simeq \PP^1_\C$. We have the equality
	\[
	\rho_z + \rho(X) -1=\rho(\XX_\zeta).
	\]
	To prove this, look at the Néron-Severi group of the K3 surface $\XX_\zeta$. For an element $\gamma \in H^2(\XX_\zeta,\Q) = H^2(X,\Q)$, being in $\NS(\XX_\zeta)$ is equivalent to being orthogonal to the $(2,0)$-form $\sigma_\zeta=\sigma - \zeta^2 \overline{\sigma} + 2 \zeta \ell$. Of course the orthogonal complement of the class $\ell$ in the original Néron-Severi group, $\ell^\perp \subseteq \NS(X)$, is always orthogonal to $\sigma_\zeta$ (and this corresponds to the addend $\rho(X)-1$). The orthogonal complement of this last space in $H^2(X,\Q)$ is $T\oplus \Q \ell$, and is in direct sum with it. Lying in $T \oplus \Q \ell$ and, at the same time, being orthogonal to $\sigma_\zeta$ means belonging to the space
	\[
	P_z^\perp \cap (T \oplus \Q \ell),
	\]
	whose dimension is, by definition, $\rho_z$. Therefore, the formula above holds.
\end{remark}

In the following, when talking about the twistor space $\XX \to \PP^1_\C$ associated with a projective complex K3 surface $X$, we imply the choice of $\ell=c_1(L)$ as K\"ahler class for the construction of $\XX$. The analogous of Proposition \ref{proposition.jump}, proven by Huybrechts, is the following result.

\begin{corollary}
	Consider the twistor space $\XX \to \PP^1_\C$ associated with a projective complex K3 surface $X$. If $\rho(\XX_\zeta) > \rho(X)$, then $\zeta$ is contained in the equator $S^1 \subseteq S^2$ (that is: $\zeta$ is mapped to the equator $S^1 \subseteq S^2$ under stereographic projection $\PP^1_\C \simeq S^2$).
\end{corollary}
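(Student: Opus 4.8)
The plan is to obtain this statement as a direct translation of Huybrechts' abstract result, Proposition \ref{proposition.jump}, through the dictionary set up in Remark \ref{remark.differentpicardnumbers} together with the identification of spheres fixed earlier in this section. First I would recall that the transcendental lattice $T = T(X) \subseteq H^2(X,\Q)$ is a polarized irreducible Hodge structure of K3 type, so that Proposition \ref{proposition.jump} applies verbatim to the associated twistor base $\PP^1_\ell \simeq S^2_\ell$ attached to $T$ and the ample class $\ell = c_1(L)$.

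Next I would pass from the geometric Picard number to the abstract jump number. For the fibre $\XX_\zeta$ over $\zeta \in \PP^1_\C$, let $z \in \PP^1_\ell$ be the corresponding point under the isomorphism $\PP^1_\ell \simeq \PP^1_\C$. By Remark \ref{remark.differentpicardnumbers} we have the identity
\[
\rho(\XX_\zeta) = \rho_z + \rho(X) - 1.
\]
Hence the hypothesis $\rho(\XX_\zeta) > \rho(X)$ is equivalent to $\rho_z > 1$; that is, $z$ is a point of \emph{excessive} Picard jump on $\PP^1_\ell$.

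I would then invoke Proposition \ref{proposition.jump}(2), which asserts that every point $z \in \PP^1_\ell$ with $\rho_z > 1$ lies on the equator $S^1_\ell$. To conclude, I would use the fact, established in the coordinate-permutation computation immediately preceding this corollary, that the composite isomorphism $S^2_\ell \simeq \PP^1_\ell \simeq \PP^1_\C \simeq S^2$ carries the equator $S^1_\ell$ onto the equator $S^1 \subseteq S^2$. Since $z \in S^1_\ell$, its image $\zeta$ therefore lies on $S^1$, as required.

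I do not expect a genuine obstacle here: the entire content is bookkeeping, transporting Huybrechts' statement across two isomorphisms. The only point deserving care is to ensure that the isomorphism $\PP^1_\ell \simeq \PP^1_\C$ sends $S^1_\ell$ to $S^1$ rather than to some rotated circle, but this is exactly what the explicit verification preceding the corollary—identifying the composite with the coordinate permutation $(x,y,z) \mapsto (z,x,y)$—has already secured.
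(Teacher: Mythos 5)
Your proposal is correct and is precisely the argument the paper intends: the corollary is stated immediately after Remark \ref{remark.differentpicardnumbers} and the coordinate-permutation computation, and follows from the equivalence $\rho(\XX_\zeta) > \rho(X) \Leftrightarrow \rho_z > 1$ combined with Proposition \ref{proposition.jump}(2) and the identification of the two equators. Your noted point of care—that the equator of $S^2_\ell$ (altitude zero) goes to the circle $x_1 = 0$ in $S^2$, which is what the paper calls the equator there—is exactly what the permutation $(x,y,z) \mapsto (z,x,y)$ settles.
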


Proposition \ref{proposition.picardjumpeq} and Proposition \ref{proposition.action2} yield:

\begin{corollary}\label{theorem.geometricpicardjump}
	Consider the twistor space $\XX \to \PP^1_\C$ associated with a projective complex K3 surface $X$ with complex multiplication. If $\zeta$ is a point of Picard jump on the equator, then
	\[
	\rho(\XX_\zeta) =  10 + \frac{\rho(X)}{2}.
	\]
	Moreover, the Noether-Lefschetz locus of the equator is dense in the equator. More precisely, the locus of points on the equator whose fibres are algebraic K3 surfaces is dense in the equator.
\end{corollary}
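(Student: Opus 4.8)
The plan is to assemble the statement from three ingredients already in place: the dimension count of Remark \ref{remark.differentpicardnumbers}, the uniform value $\rho_z = r/2$ from Proposition \ref{proposition.picardjumpeq}, and the density results of Proposition \ref{proposition.action2}, all transported to $\PP^1_\C$ through the homeomorphism described in Section \ref{section.geometry}.

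For the numerical formula I would first record that $H^2(X,\Q)$ has dimension $22$ and that, $X$ being projective, $\NS(X)_\Q$ is non-degenerate of dimension $\rho(X)$, so that $r = \dim_\Q T(X) = 22 - \rho(X)$; since $r = [K_{T(X)} : \Q]$ is even (a CM field has even degree), $\rho(X)$ is even and the right-hand side is an integer. If $\zeta$ is a point of Picard jump on the equator and $z \in S^1_\ell$ is its image under $\PP^1_\C \simeq \PP^1_\ell$, then Proposition \ref{proposition.picardjumpeq} gives $\rho_z = r/2 = 11 - \rho(X)/2$. Substituting this into the identity $\rho(\XX_\zeta) = \rho_z + \rho(X) - 1$ of Remark \ref{remark.differentpicardnumbers} yields
\[
\rho(\XX_\zeta) = \left( 11 - \frac{\rho(X)}{2} \right) + \rho(X) - 1 = 10 + \frac{\rho(X)}{2}.
\]

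For the density of the Noether-Lefschetz locus, Proposition \ref{proposition.action2} already asserts that the points of Picard jump on the equator are dense in $S^1_\ell/\{\pm\}$; since the homeomorphism of Section \ref{section.geometry} carries the equator of $S^2_\ell$ onto the equator of $S^2 \simeq \PP^1_\C$, this density transports verbatim to $\PP^1_\C$. The sharper claim about algebraic fibres is where the only external input enters: I would invoke the standard criterion that a complex K3 surface is projective precisely when its N\'eron-Severi group contains a class of strictly positive self-intersection. The relevant points are those of $\RR^+$ (Remark \ref{remark.R+}): for $z \in \RR^+$ the point is orthogonal to some $\gamma \in T$ with $(\gamma.\gamma) > 0$, and by Remark \ref{remark.differentpicardnumbers} such a $\gamma$ corresponds to a $(1,1)$-class of $\XX_\zeta$ sitting in $\NS(\XX_\zeta)$ with the same, positive, self-intersection --- the cup product on $H^2(X,\Q)$ being unchanged as the Hodge structure varies. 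Hence every fibre over a point of $\RR^+$ is algebraic, and since $\RR^+$ is dense in $S^1_\ell/\{\pm\}$ by Proposition \ref{proposition.action2}, the algebraic locus on the equator is dense.

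The argument is essentially bookkeeping, so I do not expect a serious obstacle; the one point to watch is the last one, namely that self-intersections are taken with respect to the fixed cup-product form on $H^2(X,\Q)$, so that the positivity $(\gamma.\gamma) > 0$ inside $T$ genuinely witnesses the projectivity of the corresponding fibre. Making that identification precise, together with a clean citation of the projectivity criterion, is the only place where any care is required.
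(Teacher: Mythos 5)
Your proposal is correct and follows essentially the same route as the paper: the formula comes from combining $r = 22 - \rho(X)$, the value $\rho_z = r/2$ of Proposition \ref{proposition.picardjumpeq}, and the identity $\rho(\XX_\zeta) = \rho_z + \rho(X) - 1$ of Remark \ref{remark.differentpicardnumbers}, while the density statements are exactly the content of Proposition \ref{proposition.action2} (density of $\RR$ and of $\RR^+$) transported through the isomorphism $\PP^1_\ell \simeq \PP^1_\C$, with the projectivity criterion (existence of a class of positive self-intersection in the N\'eron-Severi group) identifying fibres over $\RR^+$ as the algebraic ones. Your extra bookkeeping --- the non-degeneracy of $\NS(X)_\Q$ giving $r = 22 - \rho(X)$, the evenness of $r$, and the observation that the cup product is unchanged so positivity in $T$ witnesses projectivity of the fibre --- only makes explicit what the paper leaves implicit.
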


\begin{center}
	\vcenteredhbox{\begin{tabular}{cc}
			\toprule
			&
			\begin{tabular}{c}
				Admissible values \\
				of $\rho(\XX_\zeta)$, CM case \\
			\end{tabular}
			\\
			\toprule
			\begin{tabular}{c}
				Outside the \\
				equator \\
			\end{tabular}
			& $\rho(X)-1, \rho(X)$ \\
			\midrule
			\begin{tabular}{c}
				On the \\
				equator \\
			\end{tabular}
			& $\rho(X)-1, 10 + \frac{\rho(X)}{2}$ \\
			\bottomrule
	\end{tabular}}
	\qquad \qquad
	\vcenteredhbox{\includegraphics[width=0.3\textwidth]{sphere}}
\end{center}

\begin{proof}
	Only the equation is left to discuss. If $r = \dim_\Q T$, then $r + \rho(X) = 22$ and $\rho_z = r/2 = (22-\rho(X))/2$, if $z$ corresponds to $\zeta$ via the isomorphism $\PP^1_\ell \simeq \PP^1_\C$. Therefore, the formula above follows from Remark \ref{remark.differentpicardnumbers}.
\end{proof}

\begin{remark}
	The only case where no points of excessive jump appear is under the assumption of maximal Picard number, i.e.\ $\rho(X)=20$. This answers \cite[Remark 5.2]{Huy} in the CM case. Also, it is a generalization of \cite[Remark 5.5]{Huy}: not only if $\rho(X)<20$ there is no fibre such that $\rho(\XX_\zeta)=20$, but the set of admissible values of $\rho(\XX_\zeta)$ is also very constrained.
\end{remark}

\begin{remark}
	A K3 surface $X$ is projective, or equivalently algebraic, if and only if there exists a line bundle $L$ with $L^2>0$, where $L^2$ denotes the self-intersection of this line bundle. For a proof, see \cite[Theorem IV.6.2]{BHPV}. For twistor fibres, we see how the geometric property ``being algebraic'' agrees with the algebraic requirement $(\ell'.\ell') > 0$.
\end{remark}

\begin{remark}
	If $\rho(X)=20$, each fibre $\XX_\zeta$ corresponding to a point of Picard jump (both on and outside the equator) has Picard number $20$ and, therefore, it is automatically algebraic and of CM type. However, when $\rho(X)<20$, thanks to Remark \ref{remark.QneqQ+}, there are non-algebraic fibres corresponding to points of Picard jump outside the equator.
\end{remark}

Corollary \ref{corollary.action1} yields the following results.

\begin{proposition}\label{proposition.geom1}
	Consider the twistor space $\XX \to \PP^1_\C$ associated with a projective complex K3 surface $X$ with complex multiplication. Then the locus of $\zeta \in \PP^1_\C$ such that $\XX_\zeta$ is algebraic is dense (for the classical topology) in $\PP^1_\C$.
\end{proposition}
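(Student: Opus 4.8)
The plan is to reduce the geometric statement to the density of $\QQ^+$ in $\U$ proved in Corollary \ref{corollary.action1}. Recall the criterion (cited above from \cite[Theorem IV.6.2]{BHPV}) that a complex K3 surface is algebraic precisely when it carries a line bundle of positive self-intersection, i.e.\ when its Néron--Severi lattice contains a class of positive square. So the first step is to describe, for $z \in \PP^1_\ell$ corresponding to $\zeta$ under the isomorphism $\PP^1_\ell \simeq \PP^1_\C$, the Néron--Severi lattice of $\XX_\zeta$ explicitly.

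Following Remark \ref{remark.differentpicardnumbers}, I would write $\NS(\XX_\zeta)$ as the orthogonal direct sum of the fixed part $\ell^\perp \cap \NS(X)$ and the varying part $P_z^\perp \cap (T \oplus \Q \ell)$. By the Hodge index theorem $\NS(X)$ has signature $(1,\rho(X)-1)$, and since $\ell$ spans its unique positive direction, the summand $\ell^\perp \cap \NS(X)$ is negative definite. Hence a class $v=v_1+v_2$, with $v_1 \in \ell^\perp\cap\NS(X)$ and $v_2 \in P_z^\perp\cap(T\oplus\Q\ell)$, satisfies $(v.v)=(v_1.v_1)+(v_2.v_2)\le(v_2.v_2)$, so $\NS(\XX_\zeta)$ contains a class of positive square if and only if $P_z^\perp\cap(T\oplus\Q\ell)$ does. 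Consequently $\XX_\zeta$ is algebraic if and only if the varying part $P_z^\perp\cap(T\oplus\Q\ell)$ contains a class of positive self-intersection; this identification is the step I expect to require the most care, as it isolates the algebraicity question inside the lattice on which the twistor Hodge structure genuinely varies.

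With this criterion in hand, the density is essentially immediate. For every $z\in\QQ^+$ lying in the upper-half sphere $\U$, the orthogonal class $\ell'\in P_z^\perp\cap(T\oplus\Q\ell)$ satisfies $(\ell'.\ell')>0$ by the very definition of $\QQ^+$, so the corresponding fibre $\XX_\zeta$ is algebraic. By the last bullet of Corollary \ref{corollary.action1}, $\QQ^+$ is dense in $\U$, hence the algebraic locus is dense in the open upper hemisphere.

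It then remains to pass from the hemisphere to the whole sphere. The algebraicity condition is invariant under the antipodal map $z\mapsto\overline{z}$ of Lemma \ref{lemma.twopoints}: indeed $P_{\overline z}=P_z$ as unoriented planes, so $P_{\overline z}^\perp\cap(T\oplus\Q\ell)=P_z^\perp\cap(T\oplus\Q\ell)$, and therefore $\XX_{\overline\zeta}$ is algebraic exactly when $\XX_\zeta$ is. Since $z\mapsto\overline z$ exchanges the two hemispheres, applying it to $\QQ^+$ produces a dense subset of algebraic points in the lower-half sphere $\overline{\U}$. As the closures of the open upper and lower hemispheres together cover $S^2\simeq\PP^1_\C$, the union of these two dense subsets is dense in $\PP^1_\C$, which proves the claim. (One could alternatively fill in the equator using Corollary \ref{theorem.geometricpicardjump}, but this is not needed for density.)
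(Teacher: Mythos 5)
Your proof is correct and takes essentially the same approach as the paper: the paper's entire proof is the one-line reduction to the density of $\QQ^+$ in $\U$ from Corollary \ref{corollary.action1}, which is exactly the core of your argument. The extra details you supply --- the orthogonal decomposition of $\NS(\XX_\zeta)$ with negative definiteness of $\ell^\perp \cap \NS(X)$ to justify the algebraicity criterion, and the antipodal-map argument extending density from the upper hemisphere to all of $\PP^1_\C$ --- correctly fill in steps the paper leaves implicit (the latter corresponding to its remark that the lower-half sphere is obtained by conjugating the upper-half sphere).
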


\begin{proof}
	The density of the locus follows from the density of $\QQ^+$ in $\U$.
\end{proof}

\begin{theorem}\label{theorem.sameCM}
	Consider the twistor space $\XX \to \PP^1_\C$ associated with a projective complex K3 surface $X$ with complex multiplication. Assume that $\zeta_1, \zeta_2 \in \PP^1_\C$ are two points of Picard jump at the same altitude and not on the equator. Then $\XX_{\zeta_1}$ is algebraic if and only if $\XX_{\zeta_2}$ is such. If so, then the CM endomorphism fields of these K3 surfaces coincide. Moreover, the set of points of Picard jump at the same altitude of $\zeta_1$ (and $\zeta_2$) is countable and dense in the circle at that altitude.
\end{theorem}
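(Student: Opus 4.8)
The plan is to reduce Theorem \ref{theorem.sameCM} entirely to the algebraic statements already established for the twistor sphere of Hodge structures, using the dictionary set up in Section \ref{section.geometry}. The central observation is that the isomorphism $\PP^1_\C \simeq \PP^1_\ell$ translates ``altitude on $S^2$'' into ``altitude on $S^2_\ell$'' and identifies the geometric Picard jump with the Hodge-theoretic one via Remark \ref{remark.differentpicardnumbers}. So I would begin by passing from $\zeta_1, \zeta_2 \in \PP^1_\C$ to their corresponding points $z_1, z_2 \in \PP^1_\ell$. Since $\zeta_1, \zeta_2$ are not on the equator and at the same altitude, the points $z_1, z_2$ lie off the equator $S^1_\ell$ at the same altitude; after replacing by conjugates if necessary, we may assume both lie in the upper-half sphere $\U$, so $z_1, z_2 \in \QQ$.

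Next I would invoke the transitivity part of Corollary \ref{corollary.action1}: since $T = T(X)$ has CM, $K_T^\times$ acts freely and transitively on $\QQ$, so there is a unique $A \in K_T^\times$ with $z_2 = A * z_1$. The key point is then part~(1) of Proposition \ref{proposition.altitude}: two points of $\QQ$ have the same altitude precisely when they differ by the action of a norm-one element. Concretely, being at the same altitude forces $\abs{A} = 1$. With $\abs{A}=1$ in hand, Proposition \ref{proposition.sameCMfield} applies directly: it shows $z_1 \in \QQ^+$ if and only if $z_2 \in \QQ^+$ (the computation there gives $(\ell''.\ell'') = (\ell'.\ell')$ because $A$ is an isometry), and when they are in $\QQ^+$ the Hodge structures $\ell'^\perp$ and $\ell''^\perp$ have the same CM field. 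Translating back through the remark that a fibre is algebraic iff the orthogonal class has positive self-intersection, this gives exactly the equivalence ``$\XX_{\zeta_1}$ algebraic $\iff$ $\XX_{\zeta_2}$ algebraic'' together with the coincidence of CM endomorphism fields.

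For the final density assertion, I would again use Corollary \ref{corollary.action1}: the points of $\QQ$ at the same altitude as $z_1$ are exactly the orbit $\{A * z_1 : \abs{A} = 1, \ A \in K_T^\times\}$, i.e.\ the orbit under $S^1 \cap K_T$. By Corollary \ref{corollary.densityCM} the set $S^1 \cap K_T$ is dense in $S^1$, and it is countable since $K_T$ is a number field; since the norm-one subgroup acts by rotation along the $\ell$-axis (Proposition \ref{proposition.altitude}(1)), its orbit is countable and dense in the circle at that altitude. Transporting this through $\PP^1_\ell \simeq \PP^1_\C$ and recalling that same-altitude on $S^2_\ell$ means same $x_1$-coordinate on $S^2$ yields the stated density in the corresponding circle of $\PP^1_\C$.

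The main obstacle, and the only genuinely non-formal step, is establishing that equal altitude forces $\abs{A}=1$ rather than merely using it. This is precisely the content of Proposition \ref{proposition.altitude}, whose parts~(1) and~(2) together assert that the altitude strictly increases when $\abs{A}<1$ and strictly decreases when $\abs{A}>1$; thus the altitude determines $\abs{A}$, and equal altitude is equivalent to $\abs{A}=1$. Everything else is bookkeeping: carefully matching the two spheres via the explicit isomorphism of Section \ref{section.geometry}, handling the upper/lower hemisphere choice by possibly conjugating, and citing the already-proven algebraic statements. I would therefore structure the proof as a short translation argument, keeping the real mathematical weight in the appeals to Proposition \ref{proposition.altitude}, Proposition \ref{proposition.sameCMfield}, and Corollary \ref{corollary.action1}.
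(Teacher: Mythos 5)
Your proposal is correct and follows essentially the same route as the paper: the paper proves Theorem \ref{theorem.sameCM} precisely by translating Corollary \ref{corollary.action1} (whose ingredients are the same ones you cite --- Proposition \ref{proposition.altitude}, Proposition \ref{proposition.sameCMfield}, and Corollary \ref{corollary.densityCM}) through the dictionary of Section \ref{section.geometry}, i.e.\ the explicit isomorphism $\PP^1_\C \simeq \PP^1_\ell$, Remark \ref{remark.differentpicardnumbers}, and the equivalence between algebraicity of a fibre and positivity of $(\ell'.\ell')$. Your handling of the hemisphere choice by conjugation and your identification of the same-altitude orbit with the norm-one subgroup match the paper's argument exactly.
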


\begin{center}
	\includegraphics[width=0.3\textwidth]{noarrow}
\end{center}

\begin{remark}
	For $\zeta_1, \zeta_2 \in \PP^1_\C$, being at the same altitude and outside the equator means corresponding to complex structures on $X$ having the same non-zero $I$-component.
\end{remark}

\newpage

\printbibliography

\end{document}